\numberwithin{equation}{section}
\newtheorem{theorem}{Theorem}[section]
\newtheorem{lemma}[theorem]{Lemma}
\newtheorem{proposition}[theorem]{Proposition}
\theoremstyle{definition}
\newtheorem{definition}[theorem]{Definition} 
\newtheorem{example}[theorem]{Example}
\newtheorem{observation}[theorem]{Observation}
\newtheorem{nolabel}[theorem]{}
\newtheorem*{prop*}{Proposition}
\newtheorem*{thm*}{Theorem}
\newcommand{\Ber}{\mathrm{Ber}}
\newcommand{\spec}{\mathrm{Spec}}
\newcommand{\sdiv}{\mathrm{sdiv}}
\newcommand{\Aut}{\mathrm{Aut}}
\newcommand{\Der}{\mathrm{Der}}
\newcommand{\car}{\mathrm{char}}
\newcommand{\Ad}{\mathrm{Ad}}
\newcommand{\rd}{\mathrm{rd}}
\def\cL{\mathcal{L}}
\def\cT{\mathcal{T}}
\def\cM{\mathcal{M}}
\def\cO{\mathcal{O}}
\def\cE{\mathcal{E}}
\def\cA{\mathcal{A}}
\DeclareMathOperator{\Sym}{Sym}
\begin{document}
	
\onehalfspacing	
	
	\title{On super curves and supervolumes}
	\author[1]{Ricardo Jes\'us Ramos Castillo\thanks{Email: rjesus@impa.br}}
	\date{\today}
	\maketitle
	
\begin{abstract}
We study the geometry of super curves  with a chosen supervolume form. We consider the algebra of divergence free vector fields $S(1|N)$ associated to such curves. When $N=2$ its derived algebra, called $S(2)$, defines a special family of curves, named $S(2)$-super curves. We exhibit an involution on the moduli space of such curves that generalizes Deligne's involution for $N=1$ super curves. The fixed point set of this involution consists on Manin's $SUSY_2$-super curves. We describe the moduli spaces of these curves. 
\end{abstract}

\section{Introduction}
\begin{nolabel}
In his famous work on classification of Lie super algebras \cite{kac-advances}, Kac introduced a list of infinite dimensional Lie super algebras generalizing the ordinary theory of Lie algebras of Cartan type. These are subalgebras of derivations on a super-commutative algebra freely generated by $n$ even variables and $N$ odd variables. In the case $n=1$ these are algebras of vector fields on a punctured super-disc $\spec \mathbb{C}(( t))[\theta_1,\dots,\theta_N]$ preserving some extra structure. In this case, the list consists of 
\begin{enumerate}
\item $W(1|N)$, all vector fields.
\item $S(1|N)$, divergence free vector fields, that is, vector fields acting trivially on the Berezininan, or preserving the section $[dt|d\theta_1\dots d\theta_N]$ of the Berezinian bundle. 
\item $CS(1|N)$, vector fields that preserve the Berezinian up to multiplication by a scalar function. 
\item $K(1|N)$, vector fields preserving a \emph{contact-like} form 
\[ dt + \sum \theta_i d\theta_i, \]
up to multiplication by a scalar function. 
\end{enumerate}
Some of these algebras are isomorphic, for example $W(1|1) \simeq K(1|2)$. While some others are not simple, for example $S(1|2)$ is not simple, but its derived algebra $S(2)$ is. 

For each such Lie super algebra, there is an associated class of super curves, with certain geometric structures preserved by these vector fields. That is, the class of super curves admitting an et\'ale cover where infinitesimal changes of coordinates are given by vector fields in the corresponding Lie super algebra. For example, a $W(1|1)$-super curve is a general $1|1$-dimensional super curve, they consist of a smooth curve $C$ together with a line bundle $\cL$ over it. $K(1|1)$-super curves are called $SUSY$-super curves by Manin in \cite{manin2014topics} and $SUSY$-super Riemman surfaces in \cite{FalquiReina,rabin:duke}, they consist of a smooth curve $C$ and a choice of a square root of the canonical bundle $\omega_C$.  Similarly $K(1|2)$-super curves are called (oriented) $SUSY_2$-super curves by Manin. In  \cite{Vaintrob} Vaintrob  studied the geometry of all these super curves, obtaining a description of the corresponding moduli spaces in each case.  

In this article we focus on one example that is missing in Vaintrob's list, these are the so-called $S(2)$-super curves. These are smooth $1|2$-dimensional super curves, endowed with a trivializing section of its Berezinian bundle and with the additional condition that the above mentioned changes of coordinates lie in Kac's $S(2)$-algebra as opposed to the full algebra $S(1|2)$. 
\end{nolabel}

\begin{nolabel}
Deligne exploited the isomorphism $W(1|1) \simeq K(1|2)$ in \cite{deligne} to describe an involution in the moduli space of general smooth $1|1$-dimensional super curves, the fixed locus of which is the moduli space of $K(1|1)$-super curves. This involution is induced by an involution of the Lie super algebra $K(1|2)$, fixing its subalgebra $K(1|1)$. Geometrically, a $W(1|1)$-super curve, or a general $1|1$-dimensional super curve, over a purely even super scheme $S$ (that is simply a scheme $S$) is given by a smooth curve $C$ over $S$ together with a line bundle $\cL$ over it. Deligne's involution corresponds to taking the Serre dual of $\cL$: \[ (C, \cL) \leftrightarrow (C, \omega_{C/S} \otimes_{\cO_{C}} \cL^*).\] The fixed point set of this involution is parametrized by curves $C$ together with a choice of a \emph{theta-characteristic} (a square root of the canonical bundle). This is the moduli space of $K(1|1)$-super curves as shown in \cite{Vaintrob}. 

In \cite{Donagi:2014hza}, Donagi and Witten show that when the base $S$ is an arbitrary (non-necessarily even) super scheme there exist non-split $W(1|1)$ super curves over $S$. In particular these curves are not given as the spectrum of the free super commutative algebra generated by a line bundle $\cL$ as above. The description of Deligne's involution in this case is not so transparent.
\label{no:1.2}
\end{nolabel}
\begin{nolabel}Our main result is to generalize Deligne's involution to the case of $S(2)$ curves. In order to  do so  we consider the sequence of inclusions
\begin{equation}  K(1|2) \subset S(2) \subset S(1|2) \subset K(1|4). 
\label{eq:inclusions}
\end{equation} 
This provides a sequence of embeddings of the corresponding moduli spaces: each $K(1|2)$-super curve comes with a trivialization of its Berezinian bundle and the local changes of coordinates are in $S(2)$. Each $S(2)$-super curve is in particular a $1|2$-dimensional super curve with a trivialization of its Berezinian bundle $\Ber_C$ (an $S(1|2)$-super curve). Similarly if $C$ is a $1|2$-dimensional super curve with a trivialization of $\Ber_C$, consider its tangent bundle $\cT_C$, a locally free $\cO_C$ module of rank $1|2$. The Grassmanian $\widetilde{C}$ of rank $0|2$ subbundles of $\cT_C$ is a $1|4$-dimensional super curve with a canonical $K(1|4)$-structure \cite{dolgikh1990}. We show that there exists an involution of $K(1|4)$ that fixes pointwise its subalgebra $K(1|2)$ and preserves (but does not fix) $S(2)$. This involution implies

\begin{theorem}[Theorem \ref{teorema de automorfismo}] 
	There exists an involution $\mu$ of the moduli space $\cM_{S(2)}$ of $S(2)$-super curves. The fixed point set  of $\mu$ consists of the moduli space $\cM_{K(1|2)}$ of orientable $SUSY_2$-super curves. 
\end{theorem}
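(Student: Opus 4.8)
The plan is to derive the statement from the algebra-level involution $\sigma$ of $K(1|4)$, using the realization of $S(2)$-super curves as $K(1|4)$-super curves carrying a reduction of structure along the chain \eqref{eq:inclusions}. First I would make the following dictionary precise: an $S(1|2)$-super curve $C$ gives rise to the $1|4$-dimensional $K(1|4)$-super curve $\widetilde C=\mathrm{Gr}_{0|2}(\cT_C)$, and an $S(2)$-structure on $C$ amounts to the extra datum of an \'etale atlas whose coordinate changes are infinitesimally in $S(2)\subset S(1|2)\subset K(1|4)$; equivalently, of a cocycle valued in the (pro-algebraic) automorphism group $G_{S(2)}$ of the standard $S(2)$-superdisc, where $G_{K(1|2)}\subset G_{S(2)}\subset G_{K(1|4)}$ are the subgroups attached to \eqref{eq:inclusions}. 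This presents $\cM_{S(2)}$ as a stack of super curves equipped with a $G_{S(2)}$-atlas, taken up to refinement and gauge; and, being phrased purely in terms of automorphism group(oid)s of model discs, it does not presuppose that $C$ is split or that the base is even, cf.\ \ref{no:1.2}.

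Next I would exponentiate $\sigma$ to an automorphism $\Sigma$ of $G_{K(1|4)}$ that is the identity on $G_{K(1|2)}$, restricts to an automorphism of $G_{S(2)}$, and satisfies $\Sigma^2=\mathrm{id}$. Given an $S(2)$-super curve $C$ with $S(2)$-cocycle $(g_{\alpha\beta})$ for some atlas, define $\mu(C)$ to have the same underlying cover and cocycle $(\Sigma g_{\alpha\beta})$: since $\Sigma$ is a homomorphism preserving $G_{S(2)}$ this is again a valid $S(2)$-cocycle, and naturality of $\Sigma$ under refinements and gauge transformations shows $\mu$ descends to $\cM_{S(2)}$, with $\mu^2=\mathrm{id}$ because $\Sigma^2=\mathrm{id}$. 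One inclusion for the fixed locus is then immediate: if $C$ is an orientable $SUSY_2$-super curve its atlas can be chosen $G_{K(1|2)}$-valued, which $\Sigma$ fixes, so $\mu(C)=C$ in $\cM_{S(2)}$ and hence $\cM_{K(1|2)}\subseteq\mathrm{Fix}(\mu)$.

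The content of the theorem is the reverse inclusion $\mathrm{Fix}(\mu)\subseteq\cM_{K(1|2)}$: a $\mu$-fixed $S(2)$-super curve must admit a reduction of structure to $K(1|2)$. The key algebraic input is that the $\Sigma$-fixed subgroup $G_{S(2)}^{\Sigma}$ equals $G_{K(1|2)}$ --- the group-level shadow of the fact that $\sigma$ fixes exactly $K(1|2)$ --- so that the base point is the only $\Sigma$-fixed point of the homogeneous space $G_{S(2)}/G_{K(1|2)}$. Given $C$ together with an isomorphism $\mu(C)\xrightarrow{\ \sim\ }C$, i.e.\ a $G_{S(2)}$-cochain $(h_\alpha)$ with $h_\alpha\,\Sigma(g_{\alpha\beta})=g_{\alpha\beta}\,h_\beta$, I would run a non-abelian descent argument: the $S(2)$-atlas is $\Sigma$-invariant up to the gauge $(h_\alpha)$, and one wants to promote it to genuine $\Sigma$-invariance and then read off a $G_{K(1|2)}$-valued cocycle from $(G_{S(2)}/G_{K(1|2)})^{\Sigma}=\{\ast\}$. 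I expect the obstruction to this promotion to be the main difficulty: it is a (twisted, nonabelian) first cohomology class of $C$ built from $\mathfrak g_{S(2)}/\mathfrak g_{K(1|2)}$, and it does not vanish identically --- it cannot, since $\cM_{K(1|2)}$ is a proper sublocus --- but one shows it is killed under the hypothesis $\mu(C)\cong C$, at the cost of the usual $\pm1$/$\mathbb Z/2$-gerbe ambiguity in the resulting $K(1|2)$-structure (the analogue of choosing an isomorphism $\cL^{\otimes2}\simeq\omega$ in Deligne's $N=1$ picture). One must also keep the whole argument compatible with non-split super curves over arbitrary bases.

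As a cross-check --- and probably the most economical route once the explicit model of $\cM_{S(2)}$ from the body of the paper is in hand --- one can instead write $\mu$ out directly on that model, presumably a twist analogous to the Serre-duality involution $(C,\cL)\mapsto(C,\omega_{C/S}\otimes\cL^{*})$ of the $N=1$ case, and compute its fixed locus by hand, matching it term by term with the explicit description of $\cM_{K(1|2)}$.
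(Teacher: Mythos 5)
Your proposal follows essentially the same route as the paper: the paper realizes your $\Sigma$ concretely as conjugation $\Ad_\alpha$ by the explicit involution $\alpha(z|\theta^1,\theta^2,\rho^1,\rho^2)=(z-\theta^1\rho^1-\theta^2\rho^2,\rho^1,\rho^2,\theta^1,\theta^2)$ of $\Aut^\omega[[1|4]]$, twists the induced principal bundle $\Aut^\omega[[1|4]]\times_{\Aut^\delta[[1|2]]}\Aut^\delta_C$ by it, and uses the identity $\Aut[[1|2]]\cap\Ad_\alpha\left(\Aut^\delta[[1|2]]\right)=\Aut^\Delta[[1|2]]$ to show that the twisted cocycle again defines an $S(2)$-super curve precisely when $C$ is one. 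The one place where you go beyond the paper --- the reverse inclusion $\mathrm{Fix}(\mu)\subseteq\cM_{K(1|2)}$ via non-abelian descent from $(G_{S(2)}/G_{K(1|2)})^{\Sigma}=\{\ast\}$ --- is only sketched in your write-up, but the paper is no more detailed there, recording just that $\Ad_\alpha$ fixes $\Aut^\omega[[1|2]]$ pointwise.
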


There are super curves with trivial Berezinian that are not $S(2)$-super curves. And the Lie super algebra $S(1|2)$ is not stable under the involution $\mu$ above. This shows that our generalization of Deligne's involution requires precisely the $S(2)$-structure as opposed to $S(1|2)$. 
\label{no:princ-1}
\end{nolabel}
\begin{nolabel}
$S(1|2)$ curves admit a simple geometrical description: these are $1|2$-dimensional super-curves together with a trivialization of its Berezinian bundle. In contrast, $S(2)$ curves do not admit this simple geometrical description. To any $S(1|2)$ curve $C$ we attach an affine bundle, or a $\mathbb{G}_a$ torsor $K_C$. The class of this bundle is an obstruction for the $S(1|2)$ curve to be an $S(2)$ curve, namely $C$ is an $S(2)$ curve if and only if $K_C$ is trivial (Proposition \ref{prop:trivialga}).
\end{nolabel}

\begin{nolabel}Given a $W(1|1)$-super curve $C$  over a purely even super scheme $S$, it is split in the sense that there exits a smooth $1|0$-dimensional curve $C_0$ over $S$ and a line bundle $\cL$ over $C_0$ such that $C = \spec \Sym_{\cO_{C_0}} \cL[-1]$. This allowed us to describe Deligne's involution as taking Serre's dual. A similar situation arises in the $S(2)$ case: for a purely even scheme $S$ and an $S(2)$-super curve $C$ over $S$, there exists a smooth $1|0$-dimensional curve $C_0$ over $S$, a rank two bundle $\cE$ over $C_0$ satisfying $\det \cE \simeq \omega_{C_0/S}$ and such that $C \simeq \spec \Sym_{\cO_{C_0}} \cE[-1]$. That is we have
	
\begin{theorem}[Theorem \ref{split}] 
	Every $S(2)$-super curve over a purely even base $S$ is split. 
\label{thm:split1}
\end{theorem}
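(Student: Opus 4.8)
The plan is to run the standard deformation-to-the-split-model argument for supermanifolds and to feed in the $S(2)$-hypothesis only at the final, crucial step, where I would invoke Proposition~\ref{prop:trivialga}. So let $C\to S$ be an $S(2)$-super curve with $S$ purely even, let $C_0\subset C$ be its bosonic reduction, and let $\mathcal{J}\subset\cO_C$ be the ideal generated by the odd sections, so $\cO_{C_0}=\cO_C/\mathcal{J}$. Since $C$ is a smooth $1|2$-super curve, $C_0$ is smooth over $S$ of relative dimension $1$, one has $\mathcal{J}^{3}=0$, and $\mathcal{N}:=\mathcal{J}/\mathcal{J}^{2}$ is locally free of rank $0|2$ over $\cO_{C_0}$; write $\mathcal{N}=\cE[-1]$ with $\cE$ locally free even of rank $2$. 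The associated graded curve is $C^{\mathrm{spl}}:=\spec_{\cO_{C_0}}\bigl(\mathrm{gr}_{\mathcal{J}}\cO_C\bigr)=\spec_{\cO_{C_0}}\Sym_{\cO_{C_0}}\cE[-1]$, so the theorem is the assertion that $C\cong C^{\mathrm{spl}}$ together with $\det\cE\cong\omega_{C_0/S}$. The identity on determinants is immediate: the Berezinian of a super curve depends only on its associated graded, so $\cO_{C_0}\cong\Ber_C\cong\Ber_{C^{\mathrm{spl}}}$, and a direct local computation gives $\Ber_{C^{\mathrm{spl}}}\cong\omega_{C_0/S}\otimes(\det\cE)^{-1}$; hence $\det\cE\cong\omega_{C_0/S}$.

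Next I would identify the obstruction to $C\cong C^{\mathrm{spl}}$. Because $\mathcal{J}^{3}=0$, the algebra $\cO_C$ is a square-zero extension of the truncated split algebra $\cO_C/\mathcal{J}^{2}\cong\cO_{C_0}\oplus\mathcal{N}$ by the even ideal $\mathcal{J}^{2}\cong\det\cE$, in which the pairing $\mathcal{N}\otimes\mathcal{N}\to\det\cE$ is the canonical one. Equivalently, the only $\cO_{C_0}$-linear derivations of $\mathrm{gr}_{\mathcal{J}}\cO_C$ raising the $\mathcal{J}$-adic degree by a positive even amount are those of degree $2$, and they form the line bundle $\cT_{C_0/S}\otimes\det\cE$; hence the sheaf of automorphisms of $C^{\mathrm{spl}}$ inducing the identity on the associated graded is abelian and isomorphic to $\cT_{C_0/S}\otimes\det\cE$, and $C$ is classified by a single obstruction class
\[
o(C)\in H^{1}\bigl(C_0, \cT_{C_0/S}\otimes\det\cE\bigr) \cong H^{1}\bigl(C_0, \cO_{C_0}\bigr),
\]
the isomorphism coming from $\det\cE\cong\omega_{C_0/S}$; one has $C\cong C^{\mathrm{spl}}$ if and only if $o(C)=0$, and there are no further obstructions since $\mathcal{J}^{3}=0$. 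Note that $H^{1}(C_0,\cO_{C_0})$ is typically nonzero, so a general $S(1|2)$-super curve over an even base need not be split; it is precisely the $S(2)$-structure that must kill $o(C)$.

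The heart of the argument — and the step I expect to be the main obstacle — is to show that the $S(2)$-structure forces $o(C)=0$. The idea is to compute $o(C)$ from an atlas of $C$ whose coordinate changes are generated by vector fields in $S(2)=[S(1|2),S(1|2)]$: with respect to the local splittings adapted to such an atlas, the \v{C}ech cocycle representing $o(C)$ is assembled from the $\mathcal{J}$-adic-degree-$2$ parts of those coordinate changes together with brackets of their lower-degree parts, hence takes values in the degree-$2$ component of $S(2)$. But this component is zero: on $\spec\mathbb{C}((t))[\theta_1,\theta_2]$, among vector fields of the form $g(t)\theta_1\theta_2\partial_t$ the divergence-free ones are exactly the constant multiples of $\theta_1\theta_2\partial_t$, and $\theta_1\theta_2\partial_t$ represents a nonzero class in the one-dimensional abelianization $S(1|2)/S(2)$, so it is not a bracket. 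Thus the cocycle vanishes identically, $o(C)=0$, and $C\cong\spec_{\cO_{C_0}}\Sym_{\cO_{C_0}}\cE[-1]$ with $\det\cE\cong\omega_{C_0/S}$, as desired. Cleanly, the same conclusion follows by identifying $o(C)$ with the class of the $\mathbb{G}_a$-torsor $K_C$ and invoking Proposition~\ref{prop:trivialga}: $K_C$ is trivial because $C$ is an $S(2)$-curve, so $o(C)=[K_C]=0$. The technical content concealed in this last paragraph is twofold: (i) the Lie-algebraic input that $\theta_1\theta_2\partial_t$ generates $S(1|2)/S(2)$, so that the degree-$2$ part of $S(2)$ genuinely vanishes, and (ii) a careful matching of the intrinsic obstruction cocycle with the $S(2)$-valued transition data of $C$, equivalently with the \v{C}ech cocycle defining $K_C$; I expect (ii) to require the most care.
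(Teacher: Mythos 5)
Your proposal is correct and follows essentially the same route as the paper: the paper also constructs the splitting obstruction as the \v{C}ech class $\{\omega_{ij}\}\in H^1(C_0,\cT_{C_0}\otimes\det\mathcal F)\cong H^1(C_0,\cO_{C_0})$ of differences of local splittings, computes that for trivial Berezinian the degree-two part of a transition is $\lambda_{ij}\theta^1_i\theta^2_i\partial_{z_i}$ with $\lambda_{ij}$ constant (your observation that $\sdiv(g\theta^1\theta^2\partial_t)=g'\theta^1\theta^2$ and that $\theta^1\theta^2\partial_t$ spans $S(1|2)/S(2)$), identifies this cocycle with the class of the $\mathbb{G}_a$-torsor of Proposition \ref{prop:trivialga}, and concludes via its triviality for $S(2)$-curves. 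The matching of the intrinsic obstruction with the $K_C$-cocycle that you flag as the delicate step (ii) is carried out in the paper exactly as you anticipate, via the identity $\omega_{ij}(f)=\lambda_{ij}\theta^1_i\theta^2_i\partial_{z_i}f$.
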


In this situation, our involution above is given just as in the $W(1|1)$ case: it corresponds to \[ (C_0, \cE) \leftrightarrow (C_0, \omega_{C_0/S} \otimes_{\cO_{C_0}} \cE^*).\]

\label{no:noins}
\end{nolabel}
\begin{observation}
Theorem \ref{thm:split1} is false for $S(1|2)$ super curves. Even over a purely even base $S$, there are $S(1|2)$-super curves that are not split (see Example \ref{ex:non-split12}), and therefore they are not $S(2)$-super curves. 

The condition on the base $S$ on Theorem \ref{thm:split1} is necessary, that is, there exists families of $S(2)$-curves over super schemes that are not split (see Example \ref{familia S(2) no split}).

From this point of view, the condition on a super curve with trivial Berezinian, of being an $S(2)$-super curve is the analog of the condition of being oriented for general $SUSY_2$-super curves as in \cite{manin2014topics}. 
\end{observation}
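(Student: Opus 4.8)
\emph{Proof proposal.} The statement combines three assertions; the first two are substantiated by explicit constructions and the last is a structural remark. The starting point is that for a $1|2$-dimensional super curve $C$ with reduced curve $C_0$ and associated graded $\cO_C^{\mathrm{gr}}\simeq\Sym_{\cO_{C_0}}\cE[-1]$, every change of coordinates has the shape $\theta^i_\beta=\sum_j g^{ij}_{\alpha\beta}(t_\alpha)\theta^j_\alpha$, $t_\beta=f_{\alpha\beta}(t_\alpha)+h_{\alpha\beta}(t_\alpha)\theta^1_\alpha\theta^2_\alpha$, since every monomial of odd degree $\geq 3$ in two odd variables vanishes. The only non-split datum is therefore $\{h_{\alpha\beta}\}$, and the first thing I would pin down is that $\{h_{\alpha\beta}\}$ defines a class in $H^1(C_0,\mathcal{T}_{C_0}\otimes\det\cE)$ which is the \emph{complete} obstruction to splitting — complete precisely because $N=2$ leaves no room for higher-order obstructions — and which is unchanged under the residual freedom $t_\alpha\mapsto t_\alpha+c_\alpha(t_\alpha)\theta^1_\alpha\theta^2_\alpha$ in the choice of splitting.

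For the first assertion I would take $C_0$ over $\spec\mathbb{C}$ of genus $\geq 1$ and $\cE$ of rank two with $\det\cE\simeq\omega_{C_0}$, so that the obstruction group becomes $H^1(C_0,\mathcal{T}_{C_0}\otimes\omega_{C_0})\simeq H^1(C_0,\cO_{C_0})\simeq\mathbb{C}^g\neq 0$. Example \ref{ex:non-split12} then amounts to choosing transition data whose gluing cochain represents a non-zero such class \emph{and} whose transitions are divergence free; a computation of $\Ber(\phi_{\alpha\beta})$ shows the latter forces $\det g_{\alpha\beta}=f'_{\alpha\beta}$ at the body together with $(h_{\alpha\beta}f'_{\alpha\beta})'=0$, and in the simplest case ($C_0$ elliptic, the $f_{\alpha\beta}$ translations, $\cE$ trivial) the constraint reduces to $h_{\alpha\beta}$ constant — while a constant cochain can still have non-zero image under the surjection $H^1(C_0,\underline{\mathbb{C}})\to H^1(C_0,\cO_{C_0})$. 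The resulting $C$ is an $S(1|2)$-super curve over $\spec\mathbb{C}$ that is not split, so by the contrapositive of Theorem \ref{split} it cannot be an $S(2)$-super curve.

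For the second assertion I would drop the evenness of the base and reproduce the Donagi--Witten mechanism mentioned in the excerpt (see \cite{Donagi:2014hza}): over a base $S$ with non-zero odd functions one lets the fibre odd coordinates acquire a contribution linear in the odd functions of $\cO_S$, $\theta^i_\beta=\sum_j g^{ij}_{\alpha\beta}\theta^j_\alpha+\sum_l\psi^i_{l,\alpha\beta}(t_\alpha)\,\eta_l$; the obstruction to undoing this by a coordinate change is the class of $\{\psi^i_{l,\alpha\beta}\}$ in $H^1(C_0,\cE)$ tensored with the odd functions of $\cO_S$, and a non-zero class (available already for $\cE$ built from an odd theta-characteristic) gives a genuinely non-split family. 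One then arranges the remaining data so the transitions land in $S(2)$ — the point to check being that the $\mathbb{G}_a$-torsor $K_C$ of Proposition \ref{prop:trivialga} is trivial; a shortcut, when available, is to push a non-split family of $K(1|2)$-super curves forward along $K(1|2)\subset S(2)$ in \eqref{eq:inclusions}. This gives Example \ref{familia S(2) no split}. The third assertion needs no proof: by Proposition \ref{prop:trivialga} a super curve with trivialised Berezinian is an $S(1|2)$-curve and is $S(2)$ exactly when $K_C$ is trivial, while by Theorem \ref{teorema de automorfismo} the $K(1|2)$-curves inside the $S(2)$-curves are precisely Manin's oriented $SUSY_2$-curves; hence ``being $S(2)$'' is, among curves with trivial Berezinian, the structural analogue of ``being oriented'' among the $SUSY_2$-curves of \cite{manin2014topics}.

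The step I expect to be the main obstacle is the compatibility in each construction rather than either ingredient in isolation: in Example \ref{ex:non-split12} the divergence-free condition $(h_{\alpha\beta}f'_{\alpha\beta})'=0$ is rigid, so one must choose $C_0$ and $\cE$ so that a cochain satisfying it still has non-trivial class (which is why the elliptic, translation-transition model is the natural one to start from); and in Example \ref{familia S(2) no split} one must ensure the non-projected family can simultaneously be put in $S(2)$-form, i.e.\ that its $K_C$-torsor vanishes. Fixing conventions for $\Ber$ and for the filtration so that the obstruction group and these constraints come out correctly is the other place where care is needed.
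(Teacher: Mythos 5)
Your proposal is correct; for the first claim it reproduces the paper's construction, while for the second it takes a genuinely different route. For the first claim your ``simplest case'' is exactly the paper's Example \ref{ex:non-split12}: the elliptic curve $\mathbb{C}^{1|2}/\langle A,B\rangle$ with $B(z|\theta^1,\theta^2)=(z+\tau+\theta^1\theta^2|\theta^1,\theta^2)$ --- translations on the body, trivial $\cE$, constant cocycle with non-zero class in $H^1(C_0,\cO_{C_0})\simeq\mathbb{C}$ --- the only cosmetic difference being that you conclude ``not $S(2)$'' from the contrapositive of Theorem \ref{split} (which is how the Observation itself phrases it), whereas the paper's example re-derives this independently from the failure of the $0|2$ distribution on the associated $SUSY_4$-curve to split. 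For the second claim the paper's Example \ref{familia S(2) no split} perturbs the \emph{even} coordinate, $B(t|\theta,\rho)=(t+\tau+\theta\eta|\theta,\rho)$ over $\spec(\mathbb{C}[\eta])$, and certifies non-splitness by an explicit count: the space of global relative vector fields has dimension $1|1$, whereas a split family would force $1|2$. You instead perturb the \emph{odd} coordinates by $\psi^i(t)\eta$ and certify non-splitness by the non-vanishing of a class in $H^1(C_0,\cE)$ tensored with the odd part of $\cO_S$. Your mechanism is sound (the generating field $\sum_i\psi^i\eta\,\partial_{\theta^i}$ is divergence free and annihilates $z$, hence lies in $S(2)$, and the class cannot be removed because a rank $2$ bundle over a $1|0$-family over $\spec(\mathbb{C}[\eta])$ has transitions with entries in $\cO_{C_0}$ only); it buys a cleaner cohomological certificate at the price of requiring $H^1(C_0,\cE)\neq 0$, while the paper's version needs no hypothesis on $\cE$ but requires the section count. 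One caveat: your suggested shortcut via a non-split $K(1|2)$-family is not a description of the paper's example, which the paper explicitly notes carries no $SUSY_2$-structure; keep it only as an independent alternative. The minor convention discrepancies you flag (e.g.\ your constraint $(h_{\alpha\beta}f'_{\alpha\beta})'=0$ versus the paper's $G_{ij}=\lambda_{ij}\partial_{z_i}F_{ij}$ with $\lambda_{ij}$ constant) coincide in the translation model actually used and do not affect either construction.
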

\begin{nolabel}Our second result is a description of the moduli spaces of $S(2)$ and $S(1|2)$ supercurves. We first characterize the universal family of such curves over a purely even base $S$:
\begin{prop*}(See Proposition \ref{even data} below) The data of a family of $S(2)$-supercurves $C \rightarrow S$ whose reduction coincides with a given family of curves $C_0 \rightarrow S$ over a purely even scheme $S$ is equivalent to a rank $2$ vector bundle $E \rightarrow C_0$ together with an isomorphism $\det E \xrightarrow{\beta} \Omega_{C_0/S}$. Two such supercurves $(E, \beta)$ and $(E', \beta')$ are equivalent if and only if there exists a bundle isomorphism $\alpha: E \rightarrow E'$ such that $\beta' \circ \det \alpha = \beta$. 
\end{prop*}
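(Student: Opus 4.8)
The plan is to realize the correspondence explicitly through the conormal bundle of the reduction together with the restriction of the supervolume, and to recover the supercurve from this data by means of Theorem \ref{split}. Let me first describe the map from supercurves to pairs. Given a family of $S(2)$-supercurves $C\to S$ with $C_{\mathrm{red}}=C_0$, let $\mathcal I\subset\cO_C$ be the ideal sheaf of $C_0$. Since $C$ is $1|2$-dimensional one has $\mathcal I^3=0$, and $E:=\mathcal I/\mathcal I^2$ is a rank-two (purely odd) locally free $\cO_{C_0}$-module, the conormal bundle of $C_0$ in $C$. Taking Berezinians in the conormal exact sequence $0\to E\to\Omega_{C/S}|_{C_0}\to\Omega_{C_0/S}\to 0$ produces a canonical isomorphism $\Ber_C|_{C_0}\cong\Omega_{C_0/S}\otimes(\det E)^{-1}$; restricting to $C_0$ the trivializing section of $\Ber_C$ carried by the $S(1|2)$-structure then yields a nowhere-vanishing section of $\Omega_{C_0/S}\otimes(\det E)^{-1}$, i.e.\ an isomorphism $\beta\colon\det E\xrightarrow{\sim}\Omega_{C_0/S}$. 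The assignment $C\mapsto(E,\beta)$ is visibly functorial with respect to isomorphisms inducing the identity on $C_0$ — such an isomorphism identifies conormal bundles and intertwines supervolumes, hence intertwines the $\beta$'s — which already gives the ``only if'' half of the equivalence statement.

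For the inverse I would, given $(E,\beta)$, put $C:=\spec_{\cO_{C_0}}\Sym_{\cO_{C_0}}E[-1]$, a split $1|2$-dimensional supercurve over $S$ with reduction $C_0$ and conormal bundle $E$. A short local Berezinian computation shows $\Ber_C\cong\pi^*\!\bigl(\Omega_{C_0/S}\otimes(\det E)^{-1}\bigr)$, where $\pi\colon C\to C_0$ is the projection, so $\beta$ determines a trivializing section $\sigma_\beta$ of $\Ber_C$, hence an $S(1|2)$-structure; and by construction the pair attached to $(C,\sigma_\beta)$ by the previous paragraph is again $(E,\beta)$. That $(C,\sigma_\beta)$ is actually an $S(2)$-supercurve — not merely an $S(1|2)$-one — is the point where the hypotheses enter: by Proposition \ref{prop:trivialga} it suffices to check that the $\mathbb G_a$-torsor $K_C$ is trivial, and for this split $C$ one sees it directly by trivializing $C_0$ and $E$ over a common cover adapted to $\beta$: the resulting transition functions have the shape $t\mapsto\tilde t(t)$, $\theta_i\mapsto\sum_j g_{ij}(t)\theta_j$ with $\tilde t'=\det g$, whose infinitesimal generators are divergence-free vector fields of the form $g\,\partial_t+\sum b_{ij}\,\theta_j\partial_{\theta_i}$ and hence lie in $S(2)$ — they involve no $\theta_1\theta_2\partial_t$-term, which is the direction along which $S(2)$ sits with codimension one inside $S(1|2)$. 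Combined with Theorem \ref{split}, which guarantees that every $S(2)$-supercurve over the purely even base $S$ with reduction $C_0$ is isomorphic, compatibly with its supervolume, to $\spec_{\cO_{C_0}}\Sym_{\cO_{C_0}}E[-1]$ for its own $(E,\beta)$, this shows the two constructions are mutually inverse bijections on isomorphism classes.

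It remains to identify the morphisms with bundle isomorphisms compatible with the $\beta$'s, and here I would argue directly on the split models. Since $S$ is purely even, $\cO_C=\cO_{C_0}\oplus E\oplus\det E$ has odd part exactly $E$, so an isomorphism $\phi\colon C\to C'$ inducing the identity on $C_0$ is determined by $\phi^\sharp t'=t+f$ with $f\in\Gamma(C_0,\det E)$ and $\phi^\sharp\theta_i'=\alpha(\theta)_i$ with $\alpha\colon E\to E'$ a bundle isomorphism. Requiring $\phi$ to respect the $S(1|2)$-structures forces $f$ to be a (locally) constant section — in adapted coordinates the condition on the Berezinian reads $(af)'=0$ with $a\equiv 1$ — and requiring it to respect the $S(2)$-structures forces $f=0$, since $t\mapsto t+f$ is the time-one flow of $f\,\theta_1\theta_2\partial_t$, which lies in $S(2)$ only when $f=0$. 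For $\phi$ with $f=0$ one computes $\Ber(\phi)=(\det\alpha)^{-1}$, so $\phi$ carries $\sigma_{\beta'}$ to $\sigma_\beta$ precisely when $\beta'\circ\det\alpha=\beta$. This is exactly the asserted equivalence relation, which completes the identification.

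I expect the genuinely non-routine step to be verifying that the split supercurve built from $(E,\beta)$ is an $S(2)$-supercurve, equivalently that $K_C$ vanishes for it — together with the companion fact, used in the morphism part, that $t\mapsto t+c\,\theta_1\theta_2$ is not an $S(2)$-automorphism for $c\neq 0$. Both come down to the codimension-one inclusion $S(2)\subset S(1|2)$ (with $\theta_1\theta_2\partial_t$ spanning the complement) and to the behaviour of $K_C$ on split curves; everything else is bookkeeping with conormal sequences, Berezinians, and Theorem \ref{split}.
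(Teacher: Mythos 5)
Your overall strategy is sound and, for the forward direction, genuinely different from the paper's: the paper passes to the associated $SUSY_4$-curve $\widetilde C$, takes the canonical $0|2$-distribution $\widehat E$, and deduces $\det E\simeq\Omega_{C_0/S}$ from the determinant identity \eqref{determinant}, whereas you read $E$ off intrinsically as the (parity shift of the) conormal bundle $\mathcal I/\mathcal I^2$ and obtain $\beta$ by restricting the supervolume under $j^*\Ber_C\simeq\Omega_{C_0/S}\otimes(\det E)^*$. Your route avoids the detour through $\widetilde C$ and, unlike the paper's proof, makes the morphism-level statement (the equivalence $(E,\beta)\sim(E',\beta')$ via $\beta'\circ\det\alpha=\beta$) explicit; the verification that the split model built from $(E,\beta)$ is actually $S(2)$ (no $\theta^1\theta^2$-term in the even transition functions, hence trivial torsor $K_C$) is the same computation the paper performs in the paragraph preceding Proposition \ref{even data}.

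There is, however, one real gap: you invoke Theorem \ref{split} as producing an isomorphism ``compatibly with its supervolume,'' but that theorem only splits the underlying supercurve. On the split model over a purely even base, a trivializing section of $\Ber_C\simeq\pi^*(\Omega_{C_0/S}\otimes\det E^*)$ need not be pulled back from $C_0$: it has the form $\sigma_\beta+\gamma\,\theta^1\theta^2$ with $\gamma\in H^0(C_0,\Omega_{C_0/S})$, and for genus $g\ge 1$ this ambiguity cannot be removed by an isomorphism of $S(1|2)$-curves, since absorbing $\gamma$ by $z\mapsto z+\Gamma(z)\theta^1\theta^2$ requires a global primitive $\Gamma$ of $\gamma$. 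So, as written, your two constructions are mutually inverse only up to this $H^0(C_0,\Omega_{C_0/S})$-worth of supervolumes. The repair lies in the very torsor you use elsewhere: choosing local primitives $\Gamma_i$, the constants $\Gamma_i-\Gamma_j$ are precisely the cocycle of $K_C$ in $H^1(C_0,\pi^*\cO_S)$, i.e.\ the image of $\gamma$ under $H^0(C_0,\Omega_{C_0/S})\to H^1(C_0,\pi^*\cO_S)$, which is injective for a proper curve; hence the $S(2)$-condition forces $\gamma=0$ and the supervolume is automatically of the form $\sigma_\beta$. With that observation inserted the argument closes. (The remaining point --- whether preserving the $S(2)$-structure forces your $f$ to vanish rather than merely be locally constant --- affects only the automorphism group of a fixed curve, not the stated equivalence relation on pairs.)
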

Similarly for $S(1|2)$ supercurves we have Proposition \ref{even data S(1|2)}:
\begin{prop*}
A family of $S(1|2)$ curves $C \rightarrow S$ with a given reduction $\pi: C_0 \rightarrow S$ over a purely even base $S$ is determined by a rank $2$ bundle $E \rightarrow C_0$ together with an isomorphism $\beta: \det E \rightarrow \Omega_{C_0/S}$ and a class $\Gamma \in H^1(C_0, \pi^* \mathcal{O}_S)$. Two such supercurves $(E, \beta, \Gamma)$ and $(E', \beta', \Gamma')$ are equivalent if $\Gamma= \Gamma'$ and the pairs $(E,\beta)$ and $(E', \beta')$ are equivalent as in the previous proposition. 
\end{prop*}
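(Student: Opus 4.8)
The plan is to classify such families by \'etale descent over $C_0$, reducing everything to the computation of one sheaf of automorphisms and then isolating the extra moduli by which $S(1|2)$ exceeds $S(2)$. Let $C\to S$ be a family of $S(1|2)$-curves over a purely even base with reduction $\pi\colon C_0\to S$, and let $\mathcal{N}\subset\mathcal{O}_C$ be the ideal of $C_0$; since $C$ has relative dimension $1|2$ we have $\mathcal{N}^3=0$. Put $E$ for the rank two bundle with $\mathcal{N}/\mathcal{N}^2=E[-1]$, so $\mathcal{N}^2=\det E$, and let $\beta\colon\det E\xrightarrow{\ \sim\ }\Omega_{C_0/S}$ be the isomorphism induced, as for $S(2)$-curves, by restricting the trivialization of $\Ber_C$ to $C_0$. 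By Proposition~\ref{even data} and Theorem~\ref{split} the pair $(E,\beta)$ determines a split $S(1|2)$-curve $C^{\mathrm{spl}}=\spec\Sym_{\mathcal{O}_{C_0}}E[-1]$ over $C_0$ with the same reduction and the same $(E,\beta)$; so it is enough to classify, for fixed $(E,\beta)$, the families $C\to S$ with reduction $C_0$ inducing $(E,\beta)$, with $C^{\mathrm{spl}}$ as the distinguished element.

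Next I would prove a local triviality statement: every such $C$ is, locally on $C_0$, isomorphic to $C^{\mathrm{spl}}$ over $C_0$ compatibly with $(E,\beta)$. On a small open where $C_0$ carries a relative coordinate $t$ and $E$ is framed compatibly with $\beta$, lift the frame to $\theta_1,\theta_2\in\mathcal{N}$ and identify $\mathcal{O}_C$ with $\mathcal{O}_{C_0}[\theta_1,\theta_2]$; then the given trivialization of $\Ber_C$ reads $(1+c(t)\theta_1\theta_2)[dt|d\theta_1 d\theta_2]$ — the purely even base forbids odd coefficients, and the $\beta$-framing forces the leading term to be $1$ — and the coordinate change $t\mapsto t+a\theta_1\theta_2$, $\theta_i\mapsto\theta_i$, with $\partial_t a=-c$, normalizes it to $[dt|d\theta_1 d\theta_2]$. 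This is a super Moser lemma for the Berezinian section; arranging that the resulting local isomorphisms are simultaneously compatible with the reduction, with $E$, and with $\beta$ is the bookkeeping I expect to be the main obstacle.

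Granting this, the transition maps on overlaps lie in the sheaf $\mathcal{A}$ of automorphisms of $C^{\mathrm{spl}}$ over $C_0$ inducing the identity on $(E,\beta)$. Such a $\phi$ fixes each $\theta_i$, because it is the identity on $\mathcal{N}/\mathcal{N}^2$ and $(\mathcal{N}^2)^{\mathrm{odd}}=0$ (here $S$ is even); hence $\phi\colon t\mapsto t+a\theta_1\theta_2$, $\theta_i\mapsto\theta_i$ for a unique even $a\in\mathcal{O}_{C_0}$, that is, $\phi=\exp((a\theta_1\theta_2)\partial_t)$. Such $\phi$ preserves the trivialized Berezinian iff $(a\theta_1\theta_2)\partial_t$ is divergence free, iff $\partial_t a=0$; and via $\beta$ one has $(a\theta_1\theta_2)\partial_t\in\det E\otimes\mathcal{T}_{C_0/S}\cong\Omega_{C_0/S}\otimes\mathcal{T}_{C_0/S}=\mathcal{O}_{C_0}$, whose subsheaf killed by $d_{C_0/S}$ is exactly $\pi^{*}\mathcal{O}_S$; composition of two such corresponds to addition. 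Hence $\mathcal{A}\cong\pi^{*}\mathcal{O}_S$, and pinning down the inclusion ``only these automorphisms'' is precisely where the distinction between $S(1|2)$ and $S(2)$ is used. (It reflects that $\theta_1\theta_2\partial_t$ is divergence free but not in $S(2)=[S(1|2),S(1|2)]$ and spans $S(1|2)$ over it; for $S(2)$-curves one intersects with $S(2)$ and gets the trivial sheaf, which is why Proposition~\ref{even data} and Theorem~\ref{split} carry no third datum.)

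It then remains to run descent: the families $C\to S$ with reduction $C_0$ inducing $(E,\beta)$ form a torsor under $H^1(C_0,\mathcal{A})=H^1(C_0,\pi^{*}\mathcal{O}_S)$ with base point $C^{\mathrm{spl}}$, and one attaches to $C$ the \v{C}ech class $\Gamma\in H^1(C_0,\pi^{*}\mathcal{O}_S)$ of its transition cocycle — well defined by the computation of $\mathcal{A}$, and equal to the class of the $\mathbb{G}_a$-torsor $K_C$ of Proposition~\ref{prop:trivialga}; conversely glueing copies of $C^{\mathrm{spl}}$ along any such cocycle realizes every class, so $(E,\beta,\Gamma)$ classifies these families. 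For the equivalence statement, an isomorphism $C\to C'$ over $\mathrm{id}_S$ covering $\mathrm{id}_{C_0}$ induces an isomorphism $\alpha\colon(E,\beta)\to(E',\beta')$ as in Proposition~\ref{even data}; since $\alpha$ covers $\mathrm{id}_{C_0}$ and $\pi^{*}\mathcal{O}_S$ is pulled back from $S$, it acts trivially on $H^1(C_0,\pi^{*}\mathcal{O}_S)$, so such an isomorphism exists precisely when $(E,\beta)\cong(E',\beta')$ and $\Gamma=\Gamma'$, as claimed.
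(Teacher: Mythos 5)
Your proposal is correct and follows essentially the same route as the paper: both identify the extra datum beyond $(E,\beta)$ as the cocycle $\{\lambda_{ij}\}$ of the $\mathbb{G}_a$-torsor $K_C$ from Proposition \ref{prop:trivialga}, the paper by writing the transition maps $z_j=F_{ij}(z_i)+\lambda_{ij}\partial_{z_i}F_{ij}\,\theta^1_i\theta^2_i$ explicitly and you by packaging the same computation as descent along the automorphism sheaf $\mathcal{A}\cong\pi^*\mathcal{O}_S$. The only point worth tightening is the claim that $\alpha$ ``acts trivially on $H^1(C_0,\pi^*\mathcal{O}_S)$'': the real reason is that conjugation of $\exp(\lambda\theta^1\theta^2\partial_t)$ by an automorphism with $\det\alpha=1$ (forced by compatibility with $\beta$) fixes $\theta^1\theta^2$ and hence the cocycle, not merely that the sheaf is pulled back from $S$.
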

The map $(E, \beta, \Gamma) \rightarrow (E,\beta)$ could be thought of as a fibration from the moduli space of $S(1|2)$ supercurves to the moduli space of $S(2)$ curves over purely even bases. However, there are non-trivial odd deformations of $S(2)$ supercurves. 

We describe the full moduli space of $S(2)$ supercurves under the assumption that the base super-scheme is split. Given such a split super-scheme $S$ with purely even reduction $S_{\mathrm{rd}}$. The datum of a family of $S(2)$ curves $C \rightarrow S$ with reduction $C_0 \rightarrow S_{\mathrm{rd}}$ is equivalent to a class in $H^1(C_0,G)$ where $G$ is a sheaf of groups over $C_0$ described in \ref{group bundle} (see Proposition \ref{odd part}).

\label{no:desc-moduli}
\end{nolabel}

\begin{nolabel}The organization of this article is as follows. In section \ref{sec:super-algebras} we recall the basic preliminaries on super-commutative algebras, their modules, and their derivations. We introduce the relevant infinite dimensional Lie algebras in Kac's list and describe their associated infinite dimensional groups as groups of automorphisms of a super-disk preserving certain geometric structure. 

In section \ref{sec:super-manifolds} we recall the basic preliminaries on super-geometry. Define our curves of interest and construct a principal $\mathbb{G}_a$ bundle characterizing the obstruction of an $1|2$ dimensional super-curve with trivial Berezinian being an $S(2)$ super-curve. We show that every $S(2)$ curve over a purely even base is split (Theorem \ref{split}) and we finish that section attaching a SUSY $1|2N$ curve to any $1|N$ super-curve. 

In section \ref{sec:duality} we describe the involution of the moduli space of $S(2)$ supercurves generalizing that of Deligne for general $1|1$ supercurves. 
In section \ref{sec:families} we give the above mentioned examples of families of supercurves: $S(1|2)$ supercurves that are not $S(2)$ supercurves. Non-split $S(1|2)$ supercurves over a purely even base. Non-split $S(2)$ supercurves over a super-scheme. 
In section \ref{sec:moduli} we give a description of the moduli spaces of $S(1|2)$ and $S(2)$ supercurves. We describe the subspace of supercurves over purely even schemes and then describe the possible deformations of such a curve in the {\em odd directions of the base} under the assumption that the base is a split superscheme.

We identify the full automorphism group of such families of supercurves for genus $g \geq 2$ and describe the corresponding orbifold quotient.
\label{no:organization}
\end{nolabel}
\section{Super algebra}
 \label{sec:super-algebras}

\subsection{Super algebras}

\begin{definition}
	A \emph{$k$-super algebra $R$ over a field $k$} is a $\mathbb Z/2\mathbb Z$-graded $k$-vector space, $R=R_0\oplus R_1
	$\footnote{For super spaces we will write $R_i$ meaning the $i$-th part of $R$ for $i=0,1\mod 2$.} with a unital multiplication $R\otimes_k R\to R$ that respects the gradation, i.e. $R_iR_j\subset R_{i+j}$, such that for homogeneous elements $a\in R_i$, $b\in R_j$ we have the commutative rule: $ba=(-1)^{ij}ab$.
	
	Let $R,S$ be $k$-super algebras, a $k$-linear map $T:R\to S$ is said to be \emph{even} if $T(R_i)\subset S_i$, $i=0,1\mod 2$, and is said to be \emph{odd} if $T(R_i)\subset S_{i+1}$, $i=0,1\mod 2$. An even $k$-linear map $T:R\to S$ is said to be a \emph{homomorphism of super algebras} if $T(rr')=T(r)T(r')$ for any $r,r'\in R$ and $T(1)=1$. The set of super algebras homomorphisms is going to be denoted by $Hom_{SAlg}(R,S)$. Let $R,S$ be super algebras over $k$, we say that \emph{$R$ is an $S$ algebra} if there exists a homomorphism of super algebras $S\to R$.
	
	 An element $a\in R$ is called \emph{even} if $a\in R_0$ and is said to be \emph{odd} if $a\in R_1$. Also, we say that $a\in R$ has \emph{parity $j$} if $a\in R_j$. 
	
	For a non-nilpotent even element $f\in R_0$ we denote by $R_f$ the super algebra given by the localization of $R$ with respect to the multiplicative set $\{1,f,f^2,\dots\}$.

\end{definition}

\begin{example}
	A commutative ring $R$ over $k$ can be seen as a super algebra with $R_0=R$ and $R_1=0$. Also, observe that for any super algebra $R$, $R_0$ is a commutative ring.
\end{example}

\begin{example}\label{ejemplo}
	Given a super algebra $R$, the super algebra of polynomials $R[t]$, with $t$ an even variable, is defined as the usual algebra of polynomials with the $\mathbb Z/2\mathbb Z$-gradation:
		\begin{equation*}
			R[t]_j:=\{a_0+a_1t+\dots a_nt^n:n\in\mathbb N,a_i\in R_j\},	
		\end{equation*}
	then, $R[t]$ is a super algebra. Recursively, we will consider the super algebra $R[t_1,\dots,t_n]:=R[t_1,\dots,t_{n-1}][t_n]$, for the even variables $t_1,\dots, t_n$.
	
	For a super algebra $R$ we can construct the \emph{Grassmann algebra} $R[\theta]$, with $\theta$ an odd variable, defined as the usual algebra of polynomials with the $\mathbb Z/2\mathbb Z$-gradation:
		\begin{equation*}
			R[\theta]_j:=\{a_0+a_1\theta:a_0\in R_j,a_1\in R_{j+1}\},	
		\end{equation*}
	then, $R[\theta]$ is a super algebra with $\theta\in R[\theta]_1$. Recursively, we will consider the Grassmann algebra of rank $n$, $R[\theta^1,\dots,\theta^n]:=R[\theta^1,\dots,\theta^{n-1}][\theta^n]$, for the odd variables $\theta^1,\dots,\theta^n$.
	
	We write $R[m|n]:=R[t_1,\dots,t_m][\theta^1,\dots,\theta^n]$, where $t_1,\dots,t_m$ are even and $\theta^1,\dots,\theta^n$ are odd variables.
	
	Similarly, we define $R[[ m|n]]:=R[[ t_1,\dots,t_m]] [\theta^1,\dots,\theta^n]$, for the even variables $t_1,\dots,t_m$ and odd variables $\theta^1,\dots,\theta^n$.
\end{example}

\begin{definition}
	Let $R$ be a super algebra and put $J:=R_1+R_1^2$.	We define the \emph{reduced ring of $R$} as the quotient $R_{\text{rd}}:=\frac{R}{J}$, this is a ring endowed with the projection $R\to R_{\text{rd}}$.
\end{definition}

\begin{observation}
	Let us consider the category of super algebras over a field $k$, $SSch/k$. Over it, any super algebra $R$ define a functor
		\begin{equation*}
			\begin{split}
			h_R:SAlg\to&Sets\\
			S\mapsto&Hom_{SAlg}(R,S).
			\end{split}
		\end{equation*}
	If we consider a commutative $k$-algebra $S$, then any morphism $R\to S$ vanishes in $R_1$, since $S_1=0$. Then such morphism factorizes through the projection $R\to R_{\text{rd}}$, so we get the natural bijection
		\begin{equation*}
			Hom_{SAlg}(R,S)\to Hom_{CAlg}(R_{\text{rd}},S),
		\end{equation*}
where $CAlg$ is the category of commutative $k$-algebras. 
	Finally, we get the lemma:
	
	\begin{lemma}\label{caracterizacion de reduccion}
		The restriction $h_R|_{CAlg}$ is represented by $R_{\text{rd}}$.
	\end{lemma}	 
\end{observation}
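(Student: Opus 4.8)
The statement to prove is Lemma~\ref{caracterizacion de reduccion}: the restriction $h_R|_{CAlg}$ is represented by $R_{\text{rd}}$.

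\medskip

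\noindent\textbf{Proof proposal.} The plan is to unwind the definitions and exhibit a natural isomorphism of functors $h_R|_{CAlg} \simeq h_{R_{\text{rd}}}$ on the category $CAlg$ of commutative $k$-algebras. First I would fix a commutative $k$-algebra $S$, viewed as a super algebra with $S_1 = 0$, and consider an arbitrary super algebra homomorphism $\varphi : R \to S$. The key observation is that $\varphi(R_1) \subseteq S_1 = 0$, and since $\varphi$ is multiplicative, $\varphi(R_1^2) \subseteq \varphi(R_1)\varphi(R_1) = 0$ as well; hence $\varphi$ kills the ideal $J = R_1 + R_1^2$. By the universal property of the quotient, $\varphi$ factors uniquely as $R \to R_{\text{rd}} \xrightarrow{\bar\varphi} S$, where $\bar\varphi$ is a homomorphism of super algebras (in fact of ordinary rings, since $R_{\text{rd}}$ is a ring). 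Conversely, any homomorphism $R_{\text{rd}} \to S$ composed with the projection $R \to R_{\text{rd}}$ gives a super algebra homomorphism $R \to S$. These two assignments are mutually inverse, giving a bijection
\[
Hom_{SAlg}(R, S) \;\xrightarrow{\ \sim\ }\; Hom_{CAlg}(R_{\text{rd}}, S).
\]

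\medskip

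\noindent Next I would check naturality: given a morphism $S \to S'$ of commutative $k$-algebras, the square relating post-composition on both sides commutes, which is immediate since factorization through $R \to R_{\text{rd}}$ is compatible with post-composition. This yields a natural isomorphism $h_R|_{CAlg} \simeq Hom_{CAlg}(R_{\text{rd}}, -) = h_{R_{\text{rd}}}$, which is exactly the assertion that $h_R|_{CAlg}$ is represented by $R_{\text{rd}}$.

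\medskip

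\noindent The only point requiring a small verification — and the closest thing to an obstacle, though it is routine — is that $J = R_1 + R_1^2$ is genuinely a two-sided ideal of $R$ (so that the quotient ring $R_{\text{rd}}$ makes sense) and that it is the smallest ideal with $R/J$ purely even; both follow from super-commutativity: for $a \in R_1$ and homogeneous $r \in R$, $ra$ and $ar$ differ by a sign and lie in $R_1$ or $R_1^2$ according to the parity of $r$. Since super algebras are assumed unital and the projection $R \to R_{\text{rd}}$ is the canonical one, no further bookkeeping is needed, and the lemma follows.
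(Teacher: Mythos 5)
Your argument is correct and is essentially the same as the paper's: both observe that a homomorphism into a purely even algebra kills $R_1$ (hence $J=R_1+R_1^2$) and therefore factors uniquely through $R\to R_{\text{rd}}$, yielding the natural bijection. Your added checks that $J$ is an ideal and that the bijection is natural are routine elaborations the paper leaves implicit.
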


\begin{example}
	For a commutative algebra $R$ and the Grassmann algebra $R[\theta^1,\dots,\theta^n]$, observe that
		\begin{equation*}
			R[\theta^1,\dots,\theta^n]\to(R[\theta^1,\dots,\theta^n])_{\text{rd}}\simeq R,
		\end{equation*}
	and there exists a section $R\to R[\theta^1,\dots,\theta^n]$.
\end{example}
	
\subsection{Super modules}
	
\begin{definition}
	Let $R$ be a $k$-super algebra and consider $M$ be $\mathbb Z/2\mathbb Z$-graded $k$-vector space, we will say that $M$ is a \emph{module over $R$} if is endowed with a $k$-bilinear product
		\begin{equation*}
			R\otimes_k M\to M
		\end{equation*}
	such that $R_{i}M_{j}\subset M_{i+j}$, and for any $a,b\in R$ and $m\in M$:
		\begin{equation*}
			\begin{split}
				a\cdot(b\cdot m)=&\ (ab)\cdot m,\\
				1\cdot m=&\ m.
			\end{split}
		\end{equation*}
		
	For a super module $M$ we can construct the super module $\Pi M$ by
		\begin{equation*}
			\Pi M_{i}:=M_{i+1}.
		\end{equation*}
\end{definition}

\begin{example}
	For a super algebra $R$ the super algebra $R[n|m]$ is a module over $R$. In particular, $R$ is a module over $R$.
	
	The direct sum $R^{m|n}:=R^m\oplus (\Pi R)^n$ is also a module over $R$.
\end{example}

\begin{definition}
	For two super modules $M_1$, $M_2$ over $R$ and a $k$-linear map $T:M_1\to M_2$, we say that $T$ is \emph{even} if $T(M_{1,i})\subset M_{2,i}$, $i=0,1$, and \emph{odd} if $T(M_{1,i})\subset M_{2,i+1}$, $i=0,1$. We say that $T$ is \emph{an homogeneous $R$-homomorphism of $R$ modules} if $T$ has parity $j$ and for any $a\in R_i$ we have $T(am)=(-1)^{ij}aT(m)$.
		
	The space of $R$-homomorphism of $R$ modules, denoted by $\underline{\mathrm{Hom}}_R(M_1,M_2)$, is the $\mathbb Z/2\mathbb Z$-graded space of even and odd $R$-homomorphism:
		\begin{equation*}
			\underline{\mathrm{Hom}}_R(M_1,M_2):=\mathrm{Hom}_R(M_1,M_2)_{0}\oplus\mathrm{Hom}_R(M_1,M_2)_{1}.
		\end{equation*}
		
	We say that $T\in \underline{\mathrm{Hom}}_R(M_1,M_2)$ is \emph{invertible} if there exists an $S\in\underline{\mathrm{Hom}}_R(M_2,M_1)$ such that $T\circ S=\mathrm{id}_{M_2}$ and $S\circ T=\mathrm{id}_{M_1}$. An even element $T\in \underline{\mathrm{Hom}}_R(M_1,M_2)$ that is invertible is called \emph{isomorphism}. In this case, we say that $T$ has \emph{inverse} $S$ and that $M_1$ and $M_2$ are \emph{isomorphic}. When $M_1=M_2$, an isomorphism $T:M_1\to M_1$ is called \emph{automorphism} instead of isomorphism.
	
	An $R$-module $M$ is \emph{free, finitely generated and has rank $m|n$} if $M$ is isomorphic to $R^{m|n}$.
\end{definition}

\begin{example}
	Let $R$ be a super algebra, for a super module $M$, an element $a\in R_i$ induces an $R$-homomorphism with parity $i$ by multiplication:
	\begin{equation*}
	\begin{split}
	T_a:M\to&\ M\\
	m\mapsto&\ am.
	\end{split}
	\end{equation*}
	Observe that any invertible element in $R_0$ induce an automorphism in $M$.
\end{example}

From the odd $k$-linear map $M\to \Pi M$, $m\to m$, we obtain $M= \Pi\Pi M$. Note, that $M\to \Pi M$ is bijective but not an isomorphism since this morphism is odd.

\begin{observation}
	Let $M$ be a super module, and take
	\begin{equation*}
	\mathrm{End}_R(M):=\underline{\mathrm{Hom}}_R(M,M),
	\end{equation*}
	that is a $\mathbb Z/2\mathbb Z$-graded algebra with the composition as product. The set of invertible automorphisms is going to be denoted by $\mathrm{End}_R(M)^*$.
\end{observation}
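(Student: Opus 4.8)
The claim to verify is that $\mathrm{End}_R(M) := \underline{\mathrm{Hom}}_R(M,M)$, equipped with composition as product, is a $\mathbb{Z}/2\mathbb{Z}$-graded $k$-algebra. The plan is simply to unwind the definitions: $\underline{\mathrm{Hom}}_R(M,M) = \mathrm{Hom}_R(M,M)_0 \oplus \mathrm{Hom}_R(M,M)_1$ is already a $\mathbb{Z}/2\mathbb{Z}$-graded $k$-vector space, so the only things left to check are that composition is $k$-bilinear, associative and unital, and — the one point that is not purely formal — that it is compatible with the grading, i.e. $\mathrm{End}_R(M)_i \cdot \mathrm{End}_R(M)_j \subseteq \mathrm{End}_R(M)_{i+j}$.

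First I would record that if $T \in \mathrm{Hom}_R(M,M)_i$ and $S \in \mathrm{Hom}_R(M,M)_j$ are homogeneous, then $T\circ S$, viewed merely as a $k$-linear map, sends $M_\ell$ into $M_{\ell+i+j}$, so it has parity $i+j$ as an underlying linear map. The substance is to check that $T\circ S$ remains an $R$-homomorphism carrying the Koszul sign attached to parity $i+j$: for $a \in R_\ell$ and $m \in M$,
\[ (T\circ S)(am) = T\bigl((-1)^{\ell j}\,a\,S(m)\bigr) = (-1)^{\ell j}(-1)^{\ell i}\,a\,(T\circ S)(m) = (-1)^{\ell(i+j)}\,a\,(T\circ S)(m), \]
where the first equality uses that $S$ has parity $j$ and the second that $T$ has parity $i$. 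Hence $T\circ S \in \mathrm{Hom}_R(M,M)_{i+j}$, and extending by $k$-bilinearity gives the grading compatibility for arbitrary (not necessarily homogeneous) endomorphisms.

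The remaining assertions are immediate: composition of $k$-linear maps is $k$-bilinear and associative, and $\mathrm{id}_M$ is an $R$-homomorphism of parity $0$ serving as a two-sided identity, so $\mathrm{End}_R(M)$ is a unital associative $\mathbb{Z}/2\mathbb{Z}$-graded $k$-algebra (and consequently $\mathrm{End}_R(M)^*$ is a group under composition). I do not expect any genuine obstacle here: the only thing to get right is the sign in the displayed line — in particular that the composite of two odd endomorphisms comes out even — together with the caveat that this product is not super-commutative, so "$\mathbb{Z}/2\mathbb{Z}$-graded algebra" in the statement is meant in the weak sense of a graded associative algebra, not as a super algebra in the sense of the opening definition.
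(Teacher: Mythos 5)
Your verification is correct: the paper states this Observation without proof, and your computation of the Koszul sign — showing that the composite of a parity-$i$ and a parity-$j$ homomorphism satisfies $(T\circ S)(am)=(-1)^{\ell(i+j)}a(T\circ S)(m)$ for $a\in R_\ell$, hence lies in $\mathrm{Hom}_R(M,M)_{i+j}$ — is exactly the routine check being left implicit. Your closing caveat that the grading is compatible with composition but the product is not super-commutative (so this is a graded associative algebra, not a super algebra in the sense of the paper's opening definition) is also the right reading of the statement.
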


\begin{observation}\label{reduction}
	Let $M$ be a free super module of rank $m|n$, i.e. $M$ is isomorphic to $R^m\oplus(\Pi R)^n$, observe that 
		\begin{equation*}
			M_{\text{rd}}:=R_{\text{rd}}\otimes_{R}M
		\end{equation*}
	is a rank $m|n$ free module over $R_{\text{rd}}$.
	
	Observe that if $M$ has rank $m|n$, then $\Pi M$ has rank $n|m$.
\end{observation}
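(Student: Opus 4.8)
The plan is to reduce both assertions to the model free module $R^{m|n}=R^m\oplus(\Pi R)^n$, exploiting that the two operations involved — base change $R_{\rd}\otimes_R(-)$ along the projection $R\to R_{\rd}$ and the parity shift $\Pi$ — are additive in $M$ and interact with one another in the obvious way.

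For the first assertion, I would begin with an even $R$-module isomorphism $M\simeq R^m\oplus(\Pi R)^n$ coming from the hypothesis that $M$ has rank $m|n$, and apply the functor $R_{\rd}\otimes_R(-)$. Since this functor is additive, it yields an even $R_{\rd}$-module isomorphism
\[ M_{\rd}\;\simeq\;(R_{\rd}\otimes_R R)^m\oplus(R_{\rd}\otimes_R\Pi R)^n. \]
I would then identify $R_{\rd}\otimes_R R\simeq R_{\rd}$ canonically, and check that $R_{\rd}\otimes_R\Pi N\simeq\Pi(R_{\rd}\otimes_R N)$ for every $R$-module $N$: this is immediate from the description of $\Pi$ as a regrading together with the fact that $R\to R_{\rd}$ is an even homomorphism, since in degree $i$ both sides consist of $(R_{\rd}\otimes_R N)_{i+1}$. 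Taking $N=R$ gives $R_{\rd}\otimes_R\Pi R\simeq\Pi R_{\rd}$, whence $M_{\rd}\simeq R_{\rd}^{m}\oplus(\Pi R_{\rd})^n=R_{\rd}^{m|n}$, which is free of rank $m|n$ over $R_{\rd}$.

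For the second assertion, I would again fix an even isomorphism $M\simeq R^m\oplus(\Pi R)^n$ and apply $\Pi$, which is additive, obtaining $\Pi M\simeq(\Pi R)^m\oplus(\Pi\Pi R)^n$. Using the identity $\Pi\Pi R=R$ recorded just before the observation, together with the canonical commutativity of direct sums, this becomes $R^n\oplus(\Pi R)^m=R^{n|m}$, so $\Pi M$ has rank $n|m$.

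I do not anticipate any genuine obstacle here; the only care needed is parity bookkeeping — recalling that a module isomorphism is even by definition, so that it survives application of $\otimes_R$ and of $\Pi$; that base change commutes with finite direct sums; and that $\Pi$ commutes with base change along the even map $R\to R_{\rd}$ with no intervening signs, as spelled out above.
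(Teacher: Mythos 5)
Your argument is correct, and it is exactly the routine verification the paper has in mind: the statement is left unproved as an observation, and your two reductions (additivity of base change along the even map $R\to R_{\rd}$ together with $R_{\rd}\otimes_R\Pi R\simeq\Pi R_{\rd}$, and additivity of $\Pi$ together with $\Pi\Pi R=R$) are the standard and intended justification. No gaps.
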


\begin{example}\label{super algebras a traves de modulos}
	For a finitely generated free super module $M$ of rank $m|n$ over $R$ with generators $\{t_1,\dots,t_m|\theta^1,\dots,\theta^n\}$, with $t_i$ even and $\theta^j$ odd, the construction given in \ref{ejemplo} gives us a super algebra, $S_R[M]:=R[t_1,\dots,t_m|\theta^1,\dots,\theta^n]$. Observe, that this super algebra does not depend on the choice of generators.
\end{example}
	
\begin{definition}
	Let $M$ be a free super module of rank $m|n$ and $T:M\to M$ be an  invertible morphism represented in some basis by the matrix
		\begin{equation*}
			T=\left[\begin{matrix}A & B\\C &D \end{matrix}\right]
		\end{equation*}
	where $A,B,C,D$ is a $m\times m$, $m\times n$, $n\times m$ and $n\times n$ matrix, respectively, we define the \emph{Berezinian of $T$} by
		\begin{equation}\label{bere}
			\Ber(T)=\det(A-BD^{-1}C)\det(D)^{-1}.
		\end{equation}
\end{definition}
		
\begin{observation}
	The Berezinian verifies the following conditions:
		\begin{enumerate}
			\item If $T=\left[\begin{matrix}A & B\\0&D\end{matrix}\right]$ or $T=\left[\begin{matrix}A & 0\\C&D\end{matrix}\right]$, then $\Ber(T)=\det(A)\det(D)^{-1}$.
			\item Let $T,S$ be two automorphisms, then $\Ber(TS)=\Ber(T)\Ber(S)$. In particular, $\Ber(T)$ does not depend on the basis chosen.
			\item Suppose that $k=\mathbb C$ and that $M$ is finitely generated, so for $T\in \mathrm{End}_k(M)$ we can define $\exp(T)=\sum_{i\geq0}\frac{T^i}{i!}$. In this case we have
				\begin{equation*}
					\Ber(\exp(T))=\exp(\text{str}(T)),
				\end{equation*}
			where $\text{str}(T):=\text{tr}(A)-\text{tr}(D)$ is the \emph{super-trace} of $T$. 
			\item To define the Berezinian, we just need that $D$ in \eqref{bere} is invertible, and the observations above still hold even when $T$ is not necessarily invertible.
		\end{enumerate}
\end{observation}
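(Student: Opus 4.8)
The plan is to derive all four items from the multiplicativity statement (2). Items (1) and (4) are immediate: setting $C=0$ (resp. $B=0$) in \eqref{bere} gives $\Ber(T)=\det(A)\det(D)^{-1}$ at once, and \eqref{bere} together with every manipulation below uses only the invertibility of $D$, never that of $A$ or of $T$ itself, so the definition and the stated properties extend verbatim to that setting. I will also use the elementary fact that over a super-commutative ring an even matrix $T$ is invertible if and only if both $A$ and $D$ are invertible: the off-diagonal entries of $T$ are odd, hence square-zero, hence generate a nilpotent ideal, and one reduces modulo it. This is what makes (2) apply to automorphisms, and it yields the basis-independence of $\Ber$ via $\Ber(PTP^{-1})=\Ber(P)\Ber(T)\Ber(P)^{-1}=\Ber(T)$.

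For item (2) I would first record the Gauss-type factorization, valid whenever $D$ is invertible:
\[
\begin{bmatrix}A&B\\C&D\end{bmatrix}
=\begin{bmatrix}I&BD^{-1}\\0&I\end{bmatrix}
\begin{bmatrix}A-BD^{-1}C&0\\0&D\end{bmatrix}
\begin{bmatrix}I&0\\D^{-1}C&I\end{bmatrix}.
\]
This reduces $\Ber(TS)=\Ber(T)\Ber(S)$ to the three special cases in which $T$ is upper block-unipotent, block-diagonal, or lower block-unipotent: writing $T=T_{+}T_{0}T_{-}$ and applying the three cases in turn gives $\Ber(TS)=\Ber(T_{+})\Ber(T_{0})\Ber(T_{-})\Ber(S)$ for every $S$, and taking $S$ to be the identity identifies $\Ber(T_{+})\Ber(T_{0})\Ber(T_{-})$ with $\Ber(T)$. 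For $T=\left[\begin{smallmatrix}I&X\\0&I\end{smallmatrix}\right]$ and for $T=\left[\begin{smallmatrix}P&0\\0&Q\end{smallmatrix}\right]$ a one-line computation shows that the Schur complement of $TS$ equals $A-BD^{-1}C$ (resp. $P(A-BD^{-1}C)$) while the lower-right block changes by a left factor $D$ (resp. $Q$), so both identities fall out, $\Ber$ of an upper-unipotent being $1$.

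The one delicate case is $T=\left[\begin{smallmatrix}I&0\\Y&I\end{smallmatrix}\right]$ with $Y$ odd, and this is where I expect the main obstacle. Here $TS=\left[\begin{smallmatrix}A&B\\YA+C&YB+D\end{smallmatrix}\right]$; writing $YB+D=D(I+ZB)$ with $Z=D^{-1}Y$ and using the push-through identity $B(I+ZB)^{-1}=(I+BZ)^{-1}B$, the Schur complement of $TS$ collapses to $(I+BZ)^{-1}(A-BD^{-1}C)$, so that
\[
\Ber(TS)=\det(I+BZ)^{-1}\,\det(I+ZB)^{-1}\,\Ber(S).
\]
Everything thus comes down to the odd Sylvester identity $\det(I+BZ)\det(I+ZB)=1$ for matrices $B,Z$ whose entries are odd. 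I would prove it by reducing to the universal case: both sides are polynomial in the entries of $B$ and $Z$, so it suffices to check it in the Grassmann algebra generated by those entries as independent odd variables, where every element of the augmentation ideal is nilpotent and $\det(I+M)=\exp\bigl(\mathrm{tr}\log(I+M)\bigr)$. The identity $\mathrm{tr}\bigl((BZ)^{k}\bigr)=-\mathrm{tr}\bigl((ZB)^{k}\bigr)$ for all $k\ge 1$ — an instance of the super-cyclicity $\mathrm{tr}(PQ)=-\mathrm{tr}(QP)$ for $P,Q$ with odd entries, applied to $(BZ)^{k}=B\,(ZB)^{k-1}\,Z$ — then gives $\mathrm{tr}\log(I+BZ)=-\mathrm{tr}\log(I+ZB)$, and exponentiating finishes the argument.

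Finally, for item (3), granting (2): for $N$ large, $\exp(T/N)$ is a small perturbation of the identity, so its lower-right block is invertible and $\Ber(\exp(T/N))$ is defined; expanding $\exp(T/N)=I+\tfrac1N T+O(N^{-2})$ in \eqref{bere} (the $BD^{-1}C$-term being $O(N^{-2})$) gives $\Ber(\exp(T/N))=\det(I+\tfrac1N A)\det(I+\tfrac1N D)^{-1}+O(N^{-2})=1+\tfrac1N\mathrm{str}(T)+O(N^{-2})$. By (2), $\Ber(\exp T)=\Ber(\exp(T/N))^{N}=\bigl(1+\tfrac1N\mathrm{str}(T)+O(N^{-2})\bigr)^{N}$, and since the left-hand side is independent of $N$, letting $N\to\infty$ yields $\Ber(\exp T)=\exp(\mathrm{str}\,T)$.
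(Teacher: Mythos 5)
The paper states this observation without proof, treating all four items as standard facts about the Berezinian, so there is no argument of the paper's to compare against; judged on its own, your proof is correct and is essentially the classical one (as in Manin or Deligne--Morgan). You correctly isolate the only delicate point: reducing multiplicativity via the Gauss factorization to the lower block-unipotent case, where everything hinges on the odd Sylvester identity $\det(I+BZ)\det(I+ZB)=1$ for matrices with odd entries, and your derivation of it from the super-cyclicity $\mathrm{tr}(PQ)=-\mathrm{tr}(QP)$ together with $\det(I+M)=\exp(\mathrm{tr}\log(I+M))$ in the universal Grassmann algebra is sound (the two sides form a polynomial identity with integer coefficients, so checking it over $\mathbb{Q}$ suffices). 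Two small points worth tightening but not gaps: in the chain $\Ber(T_{+}T_{0}T_{-}S)$ you should note that each intermediate product has invertible lower-right block (it does, because the correction terms such as $D^{-1}YB$ have nilpotent entries), which is also exactly what makes item (4) go through; and in item (3) the discretized argument needs $N$ chosen so that the finitely many $s\in[0,1]$ where the lower-right block of $\exp(sT)$ degenerates are avoided --- equivalently one can run the cleaner infinitesimal version $\tfrac{d}{ds}\Ber(\exp(sT))=\mathrm{str}(T)\,\Ber(\exp(sT))$, which is the computation the paper itself invokes later when differentiating \eqref{ber} to get $L_X\Delta_0=\sdiv_{\Delta_0}(X)\Delta_0$.
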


\begin{observation}\label{modulo bereziniano} 
	For the free super module $M$ with generators $\{t_1,\dots,t_n,\theta^1,\dots,\theta^m\}$, with $t_i$ even and $\theta^j$ odd, we can construct the free module $\Ber(M)$ generated by the formal element $[t_1,\dots,t_n|\theta^1,\dots,\theta^m]$ with parity $m\mod 2$. Then $\Ber(M)$ has rank $1|0$ if $m$ is even and rank $0|1$ if $m$ is odd. An invertible homomorphism $T:M\to M$, induce the automorphism $\Ber(T):\Ber(M)\to\Ber(M)$. Finally, we get an homomorphism of super groups:
	\begin{equation*}
	\begin{split}
	End_R(M)^*\to&\ End_R(\Ber(M))^*\\
	T\mapsto&\ \Ber(T).
	\end{split}
	\end{equation*}
	\end{observation}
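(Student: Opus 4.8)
The plan is to produce $\Ber(M)$ by a cocycle built out of transition matrices and then let $\Ber(T)$ act on it, using only the multiplicativity of the Berezinian on matrices established above.

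First I would fix a homogeneous basis $e=(t_1,\dots,t_n\mid\theta^1,\dots,\theta^m)$ of $M$ and attach to it the free rank one $R$-module on a generator $[e]$ of parity $m\bmod 2$; thus $[e]\simeq R$ when $m$ is even and $[e]\simeq\Pi R$ when $m$ is odd, which already yields the stated rank $1\mid 0$ or $0\mid 1$. To see independence of the basis, let $e'$ be a second homogeneous basis and $S\in\mathrm{End}_R(M)^*$ the (even, invertible) change of basis with $e'=e\,S$. The odd entries of $S$ square to zero, hence are nilpotent, so reducing modulo the nil ideal $J=R_1+R_1^2$ makes $S$ block diagonal with both diagonal blocks invertible; invertibility lifts along a nil ideal, so in particular the lower-right block $D$ of $S$ (notation as in \eqref{bere}) is invertible and $\Ber(S)\in R_0^\times$ is defined. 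I then glue the generators by declaring $[e']=\Ber(S)\,[e]$ inside $[e]$, with a fixed convention for the transition factor; consistency of these gluings for three bases is exactly the cocycle identity $\Ber(S_1S_2)=\Ber(S_1)\Ber(S_2)$ proved above. This defines $\Ber(M)$ canonically, independent of the choice of generators.

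Next, for $T\in\mathrm{End}_R(M)^*$ I would define $\Ber(T)\in\mathrm{End}_R(\Ber(M))^*$ to be, in the trivialization $[e]$, multiplication by $\Ber([T]_e)$, where $[T]_e$ is the matrix of $T$ in the basis $e$ (its $D$-block is invertible by the same reduction argument, and then $\Ber([T]_e)\in R_0^\times$ since $T$ is an automorphism). This is independent of $e$: passing to $e'=eS$ one has $[T]_{e'}=S^{-1}[T]_e S$, so by multiplicativity and commutativity of $R_0$, $\Ber([T]_{e'})=\Ber(S)^{-1}\Ber([T]_e)\Ber(S)=\Ber([T]_e)$, while the transition factor $\Ber(S)$ occurs once on the source and once on the target copy of $\Ber(M)$ and cancels. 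Finally, any even automorphism of a free $R$-module of rank $1\mid0$ or $0\mid1$ is multiplication by an element of $R_0^\times$, so $\mathrm{End}_R(\Ber(M))^*\cong R_0^\times$, and under this identification $T\mapsto\Ber(T)$ becomes $T\mapsto\Ber([T]_e)$; the homomorphism property is once more the identity $\Ber(TS)=\Ber(T)\Ber(S)$ read on matrices in a common basis, and it is manifestly natural in $R$, which gives the asserted homomorphism of super groups.

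Everything here is bookkeeping around a cocycle, so I expect no genuine obstacle. The one point that deserves care is checking that the Berezinian formula actually applies to the transition matrices and to $[T]_e$, i.e.\ that the relevant $D$-blocks are invertible; this is where one uses that $J=R_1+R_1^2$ is a nil ideal, so that reduction modulo $J$ detects invertibility, together with the earlier remark that the Berezinian is well defined as soon as $D$ is invertible.
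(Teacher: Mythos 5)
Your proposal is correct and follows essentially the same route the paper intends: the Observation is really a construction, and you realize it exactly as the paper does, by taking the formal generator $[t_1,\dots,t_n|\theta^1,\dots,\theta^m]$, gluing different choices of homogeneous basis via the transition factor $\Ber(S)$, and deducing both well-definedness and the homomorphism property from the multiplicativity $\Ber(TS)=\Ber(T)\Ber(S)$ recorded in the preceding observation. Your added care about why the $D$-blocks are invertible (reduction modulo the nil ideal $J=R_1+R_1^2$) is a detail the paper leaves implicit but is exactly the right justification.
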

	
\subsection{Super derivations}

\begin{definition}
	Let $R$ be a $S$-super algebra and let $D\in End_S(R)$ with parity $i$. We say that $D$ is \emph{an $S$-derivation} if $D(s)=0$ for any $s\in S$, and for any $a\in R_j$ and $b\in R$ we have:
		\begin{equation*}
			D(ab)=D(a)b+(-1)^{ij}aD(b).
		\end{equation*}
		
	The vector space of derivations has a structure of $R$ super module given by $	(aD)(b)=aD(b),\text{ for any }a,b\in R$. We denote by $\mathrm{Der}_{R/S}$ the \emph{super module of derivations}.
	
	For two derivations $D_1\in \mathrm{Der}_{R/S,i}$ and $D_2\in \mathrm{Der}_{R/S,j}$ we define \emph{the bracket} by:
		\begin{equation*}
			[D_1,D_2]=D_1D_2-(-1)^{ij}D_2D_1,
		\end{equation*}
	with this structure $\mathrm{Der}_{R/S}$ is a super Lie algebra.
\end{definition}
	
\begin{example}
	Let $R[m|n]:=R[t_1,\dots,t_m][\theta^1,\dots,\theta^n]$, where $t_1,\dots,t_m$ are even and $\theta^1,\dots,\theta^n$ are odd, be the super algebra of polynomials associated to the super algebra $R$, then the set of derivations of $R[m|n]$ over $R$ is a $R[m|n]$ free module with even part generated by $\{\partial_{t_1},\dots,\partial_{t_m}\}$ and odd part generated by $\{\partial_{\theta^1},\dots,\partial_{\theta^n}\}$.
	
	Similarly, the generators of $\Der_{R}(R[[ 1|n]])$ are given by $\{\partial_{t_1},\dots,\partial_{t_m}|\partial_{\theta^1},\dots,\partial_{\theta^n}\}$.
\end{example}

For a vector field $X=A_0\partial_z+A_1\partial_{\theta^1}+\cdots+A_N\partial_{\theta^N}\in \Der_{R}(R[[ 1|n]])$, we define \emph{the super divergence operator}:
	\begin{equation*}
		\sdiv(X):=\partial_zA_0+(-1)^{A_1}\partial_{\theta^1}A_1+\cdots+ (-1)^{A_N}\partial_{\theta^N}A_N,
	\end{equation*}
and denote by $S(1|N)$ the \emph{space of divergence free vector fields}. 

\begin{observation}\label{generators}
	Let $R[[1|2]]$ be a super algebra with generators $\{t|\theta^1,\theta^2\}$ and maximal ideal $\mathfrak{m}=\langle t|\theta^1,\theta^2\rangle$. The space of divergence free vector fields in $\Der_{R}(R[[ 1|2]]_{\mathfrak{m}})$ has even generators:
	\begin{equation*}
	\begin{split}
	L_m  = &-z^{m+1}\partial_z-\frac{m+1}{2}z^m\sum_{i=1}^N \theta^i\partial_{\theta^i},\text{ for $m\in \mathbb Z$}, \\
	J_m^{0} = & z^m(\theta^1\partial_{\theta^1}- \theta^2 \partial_{\theta^2}),\text{ for $m\in \mathbb Z$}, \\
	J_m^{1} = & z^m\theta^1\partial_{\theta^2},\text{ for $m\in \mathbb Z$}, \\
	J_m^{2} = & z^m\theta^2\partial_{\theta^1},\text{ for $m\in \mathbb Z$}, \\
	K = & \theta^1\theta^2\partial_z,
	\end{split}
	\end{equation*}
	and odd part with generators:
	\begin{equation*}
	\begin{split}
	G_m^i = & -z^{m+1/2}\partial_{\theta^1},\text{ for $m\in \mathbb Z$}, \\
	H_m^i = & z^{m+1/2}\theta^i\partial_z -\left(m+1/2\right)z^{m-1/2}\theta^i\sum_{j=1}^N \theta^j\partial_{\theta^j},\text{ for $m\in \mathbb Z$}, .
	\end{split}
	\end{equation*}	
	Then, the algebra $S(1|2)$ is not simple, but its derived algebra 
	\begin{equation*}
	S(2):=[S(1|2),S(1|2)]
	\end{equation*}
	is simple (this is shown in \cite{article}). Observe that $S(1|2)/S(2)$ is a rank 1 Lie algebra. The map $S(1|2)\to S(1|2)/S(2)$	has its image generated by $K=\theta^1\theta^2\partial_z$. For $X\in S(1|2)$, then $X\in S(2)$ if and only if $\partial_{\theta^1}\partial_{\theta^2}(X\cdot z)=0$.
\end{observation}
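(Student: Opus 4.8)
The plan is to prove the three substantive assertions in turn — the explicit list of generators of $S(1|2)$, that the quotient $S(1|2)/S(2)$ is one-dimensional and spanned by the class of $K$, and the pointwise criterion $X\in S(2)\iff\partial_{\theta^1}\partial_{\theta^2}(X\cdot z)=0$ — taking the simplicity of $S(2)$ (and non-simplicity of $S(1|2)$) from \cite{article}, exactly as stated.

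First I would compute the generators directly. Write an arbitrary element of $\Der_R(R[[1|2]]_{\mathfrak m})$ as $X=A\partial_z+B^1\partial_{\theta^1}+B^2\partial_{\theta^2}$ and expand $A,B^1,B^2$ on the monomial basis $\{1,\theta^1,\theta^2,\theta^1\theta^2\}$ over $R((z))$. Writing out $\sdiv(X)=0$ and comparing the coefficients of the four $\theta$-monomials turns the divergence-free condition into a small linear system: the $\theta^1\theta^2$-coefficient forces the $\theta^1\theta^2$-part of $A$ to be a constant (this is the seed of everything that follows), while the remaining three coefficients each express one coefficient function of the $B^i$ in terms of the others and of $\partial_z A$. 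Solving this system and organizing the free parameters by their $z$-degree yields precisely the even generators $L_m,J^0_m,J^1_m,J^2_m,K$ and the odd generators $G^i_m,H^i_m$. This step is routine linear algebra; the only care needed is with the Koszul signs in the definition of $\sdiv$ and in multiplying by odd elements.

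Next I would introduce the linear functional $\phi(X):=\partial_{\theta^1}\partial_{\theta^2}(X\cdot z)$. Since $X\cdot z=A$, this is, up to sign, the $\theta^1\theta^2$-coefficient of $A$, which by the computation just made is a constant whenever $X\in S(1|2)$; thus $\phi\colon S(1|2)\to\mathbb C$ is well defined, and $\phi(K)\ne 0$ makes it surjective. The key point is that $\phi$ is a homomorphism onto the abelian Lie algebra $\mathbb C$, i.e. $\phi([X,Y])=0$ for all $X,Y$. Because $\phi$ vanishes on the odd part of $S(1|2)$ it suffices to treat the even–even and odd–odd cases; writing $[X,Y]\cdot z = X(Y\cdot z)-(-1)^{|X||Y|}Y(X\cdot z)$, expanding in $\theta$, and substituting the divergence-free relations found above, the $\theta^1\theta^2$-coefficient collapses to a total $z$-derivative which is antisymmetrized in $X$ and $Y$ and hence vanishes. (The computation works, but only after the divergence-free relations are used — before that $\phi$ is neither constant-valued nor multiplicative, which is precisely why the $S(2)$-condition is strictly stronger than "trivial Berezinian".) Consequently $S(2)=[S(1|2),S(1|2)]\subseteq\ker\phi$, a codimension-one subspace.

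Finally, to close the loop I would either invoke that $S(1|2)/S(2)$ has rank one, as recorded in \cite{article}, which together with $S(2)\subseteq\ker\phi\subsetneq S(1|2)$ forces $S(2)=\ker\phi$; or, to keep things self-contained, show directly that $\ker\phi$ is perfect — the $L_m$ span a centreless Virasoro subalgebra and so lie in $[\ker\phi,\ker\phi]$, and a short bracket table shows the $J^a_m,G^i_m,H^i_m$ do as well — so that $\ker\phi=[\ker\phi,\ker\phi]\subseteq[S(1|2),S(1|2)]=S(2)$. Either way $S(2)=\ker\phi$, which is the stated criterion, and since $\phi(K)\ne 0$ the class of $K$ is a nonzero vector in the one-dimensional space $S(1|2)/S(2)$ and therefore generates it. I expect the main obstacle to be bookkeeping rather than ideas: getting every sign right while solving the divergence-free system and while collapsing the bracket expression; the one genuinely conceptual point is that nothing about $\phi$ — not even that it is constant-valued — becomes visible until the divergence-free equations are imposed.
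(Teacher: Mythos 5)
The paper offers no proof of this Observation --- it is stated as a remark, with the simplicity of $S(2)$ attributed to \cite{article} --- so there is no argument of the author's to compare yours against; judged on its own, your proposal is correct and is the natural way to verify the claim. The computation of the generators from $\sdiv(X)=0$ is right, and the decisive point is exactly the one you isolate: the $\theta^1\theta^2$-coefficient of $X\cdot z$ is forced to be a \emph{constant}, so $\phi(X)=\partial_{\theta^1}\partial_{\theta^2}(X\cdot z)$ lands in $\mathbb{C}$ and one can ask whether it kills brackets. Two small points of care. First, when you check $\phi([X,Y])=0$, what actually makes the even--even case work is that the $\theta^1\theta^2$-coefficient of $X(Y\cdot z)$ becomes \emph{symmetric} in $X\leftrightarrow Y$ once the relation $b^1_1+b^2_2=\partial_z a_0$ is substituted (it equals $a_{12}\,\partial_z c_0+c_{12}\,\partial_z a_0$), so it cancels in the commutator; in the odd--odd case the coefficient is antisymmetric and cancels in the anticommutator. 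The ``total $z$-derivative'' phrasing is true but is not what does the cancelling, so state the symmetry explicitly. Second, your route (a) --- quoting that $S(1|2)/S(2)$ has rank one --- is circular if that rank-one statement is counted among the assertions of the Observation rather than imported from \cite{article}; your route (b), showing $\ker\phi$ is perfect (Virasoro brackets give the $L_m$, the $\mathfrak{sl}_2$-loop brackets give the $J^a_m$, and $[L_m,G^i_n]$, $[L_m,H^i_n]$ give the odd generators), is the self-contained closure and should be the one you keep. With that, $S(2)=[S(1|2),S(1|2)]=\ker\phi$, $\phi(K)\neq 0$ identifies $K$ as a generator of the quotient, and the stated criterion follows.
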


\subsection{The group $\Aut_R(R[[ m|n]])$}

Let $R$ be a super algebra, for the super algebra $R[[ m|n]]$ we will consider the collection of automorphisms of $R$-super algebras and denote this group as $\Aut_R(R[[ m|n]])$, that is the set of even maps $T:R[[ m|n]]\to R[[ m|n]]$ such that $T|_R=id_R$. If there is no confusion, we will write $\Aut(R[[ m|n]])$ or $\Aut[[ m|n]]$. 

We are interested in the group of automorphism $\Aut (R[[ 1|n]])$ and its group structure given by $\Phi*\Psi=\Psi\circ\Phi$.

\begin{example}\label{exponencial}
	Let $R$ be a $k$ super algebra, $\car(k)=0$, for a nilpotent $X\in\Der_R(R[[ 1|n]])_0$, we define its exponential by:
		\begin{equation}\label{formula de exponencial}
			\exp(X)=\mathrm{id}+\frac{X}{1!}+\frac{X^2}{2!}+\cdots.
		\end{equation}
	For two nilpotent $N,L\in\Der_R(R[[ 1|n]])_0$ with $[N,L]=0$ we get 
		\begin{equation*}
			\exp(N+L)=\exp(N)\exp(L).
		\end{equation*}
	In particular, $\exp(N)$ has an inverse $\exp(-N)$. Finally, $\exp(X)\in\Aut_R(R[[ m|n]])$.
	
\end{example}

Let $m\in\mathbb N$, and the $R$-super algebra $R[t|\theta^1,\cdots,\theta^n]/\mathfrak{m}^{m}$, where $\mathfrak{m}:=\langle t|\theta^1,\dots,\theta^n\rangle$. Similar to Example \ref{exponencial}, an element $X\in\Der_{R}(R[t|\theta^1,\cdots,\theta^n]/\mathfrak{m}^{m})$ is nilpotent, so we can define the exponential as \eqref{formula de exponencial}. For the ind-family of $R$-algebras $\{R[t|\theta^1,\cdots,\theta^n]/\mathfrak{m}^{m}\}_{m\in\mathbb N}$, we get the prounipotent group and its pronilpotent Lie algebra:
	\begin{equation*}
		\begin{split}
			\Aut_{R,+}(R[[ 1|n]]):=&\ \lim_{m\to\infty}\Aut_R(R[t|\theta^1,\cdots,\theta^n]/\mathfrak{m}^{m})\\
			\Der_{R,+}(R[[ 1|n]]):=&\ \lim_{m\to\infty}\Der_R(R[t|\theta^1,\cdots,\theta^n]/\mathfrak{m}^{m})
		\end{split}
	\end{equation*}
and a well defined \emph{exponential}:
	\begin{equation}\label{exponencial 2}
		\exp:\Der_{R,+}(R[[ 1|n]])\to\Aut_{R,+}(R[[ 1|n]]).
	\end{equation}
Observe that this map is surjective, since is surjective for any $m\in\mathbb N$.

Denote by $\Aut_0(R[[ 1|n]])$ the group of automorphisms generated by affine maps on $\{t|\theta^1,\dots,\theta^N\}$.

It was proven \cite[Lemma~6.2.1]{frenkel2004vertex}:

\begin{proposition}\label{descomposicion 0}
	The group $\Aut_R(R[[ 1|n]])$ is a semi-direct product of $\Aut_0(R[[ 1|n]])$ and $\Aut_+(R[[ 1|n]])$.
\end{proposition}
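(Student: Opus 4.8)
The plan is to decompose an arbitrary automorphism $\Phi \in \Aut_R(R[[1|n]])$ as a product $\Phi = \Phi_0 * \Phi_+$ with $\Phi_0 \in \Aut_0$ and $\Phi_+ \in \Aut_+$, and to show this decomposition is unique, which together give the semidirect product structure once we check that $\Aut_+$ is normal. An automorphism $\Phi$ is determined by the images $\Phi(t) \in R[[1|n]]_0$ and $\Phi(\theta^i) \in R[[1|n]]_1$, $i = 1,\dots,n$. First I would extract the linear-plus-constant part: write $\Phi(t) = a_0 + \sum_i c_i \theta^i + (\text{higher order})$ and $\Phi(\theta^i) = b^i_0 + \sum_j M^i_j \theta^j + (\text{higher order})$, where here "order" is measured with respect to the maximal ideal $\mathfrak m = \langle t \mid \theta^1,\dots,\theta^n\rangle$, and let $\Phi_0$ be the affine automorphism with exactly these linear and constant coefficients. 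For $\Phi_0$ to actually be an automorphism, the associated matrix (the even block coming from $\partial_t \Phi(t)$ at the origin together with the odd block $M$) must be invertible over $R$; this follows because $\Phi$ itself is an automorphism, so its "symbol" — the induced endomorphism on $\mathfrak m/\mathfrak m^2$ — is invertible, and that symbol is precisely the linear part of $\Phi_0$. (One should be slightly careful that in the super setting "affine" means affine-linear in all of $t,\theta^1,\dots,\theta^n$ simultaneously; the constant terms $a_0, b^i_0$ with $b^i_0$ odd are allowed, and $a_0$ lies in the nilpotent-plus-units part of $R_0$ as needed for invertibility of $\Phi_0$.)

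Next I would set $\Phi_+ := \Phi_0^{-1} * \Phi = \Phi \circ \Phi_0^{-1}$ (using the paper's convention $\Psi * \Phi = \Phi \circ \Psi$; I will fix conventions carefully when writing the details) and check that $\Phi_+ \in \Aut_+(R[[1|n]])$, i.e. that $\Phi_+$ is congruent to the identity modulo $\mathfrak m^2$ in the appropriate sense — equivalently that it lies in $\Aut_{R,+}(R[[1|n]]) = \varprojlim_m \Aut_R(R[t\mid\theta^\bullet]/\mathfrak m^m)$ as defined just before the statement. This is immediate from the construction: $\Phi$ and $\Phi_0$ have the same constant and linear parts, hence $\Phi_+$ fixes $t$ and each $\theta^i$ up to terms of order $\ge 2$, and by Proposition \ref{descomposicion 0}'s cited source (Frenkel–Ben-Zvi, Lemma 6.2.1) such automorphisms are exactly the prounipotent ones, with $\exp$ from \eqref{exponencial 2} surjective onto them. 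For uniqueness: if $\Phi_0 * \Phi_+ = \Phi_0' * \Phi_+'$ with both factors in the respective subgroups, then $\Phi_0^{-1}\Phi_0' = \Phi_+ (\Phi_+')^{-1}$ lies in $\Aut_0 \cap \Aut_+$, and an affine automorphism that is the identity modulo $\mathfrak m^2$ must already be the identity (its linear and constant parts vanish), so $\Phi_0 = \Phi_0'$ and $\Phi_+ = \Phi_+'$.

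It remains to see that $\Aut_+$ is normal in $\Aut_R(R[[1|n]])$, so that the product decomposition is genuinely a semidirect product $\Aut_R(R[[1|n]]) = \Aut_0 \ltimes \Aut_+$. For this I would argue that conjugation by any automorphism $\Psi$ preserves the filtration by the powers $\mathfrak m^m$ — indeed $\Psi(\mathfrak m) = \mathfrak m$ since $\Psi$ is an $R$-algebra automorphism fixing $R$ and permuting the maximal ideal — hence $\Psi$ induces automorphisms of each finite quotient $R[t\mid\theta^\bullet]/\mathfrak m^m$ compatibly, and conjugation carries a prounipotent element (one inducing the identity on $\mathfrak m/\mathfrak m^2$) to another such element. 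So $\Psi \Aut_+ \Psi^{-1} = \Aut_+$.

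The main obstacle I anticipate is purely bookkeeping rather than conceptual: getting the parities and signs right in the super-affine normal form, and in particular being precise about what "$\Aut_0$" and "affine maps" mean when $R$ itself has odd elements (so that, e.g., $\Phi(t)$ can have a constant term in $R_0$ that is not a unit but a unit-plus-nilpotent, and $\Phi(\theta^i)$ can have an odd constant term in $R_1$). Once the correct notion of the "symbol" of $\Phi$ — its action on $\mathfrak m/\mathfrak m^2$ as a rank $1|n$ free $R$-module, together with the translation part — is in place, invertibility of that symbol for an honest automorphism $\Phi$ is what makes $\Phi_0$ well-defined, and everything else is the standard prounipotent-radical argument; since the result is quoted from \cite[Lemma~6.2.1]{frenkel2004vertex} I would in fact keep the write-up brief and refer there for the details of the $n=0$-type estimates, only indicating the modifications needed to include the odd variables.
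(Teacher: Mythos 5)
The paper does not actually prove this statement; it quotes it from \cite[Lemma~6.2.1]{frenkel2004vertex}, so you are supplying an argument where the paper supplies only a citation. Your overall strategy --- read off the symbol of $\Phi$ on $\mathfrak m/\mathfrak m^2$, use invertibility of $\Phi$ to see that this symbol is invertible and hence defines $\Phi_0\in\Aut_0$, set $\Phi_+=\Phi\circ\Phi_0^{-1}$, check $\Phi_+\equiv\mathrm{id}\bmod\mathfrak m^2$, and observe $\Aut_0\cap\Aut_+=\{1\}$ --- is the standard Levi-type decomposition and is correct in the setting of the cited lemma, namely for automorphisms preserving the maximal ideal $\mathfrak m=\langle t\mid\theta^1,\dots,\theta^n\rangle$.

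There is, however, one step that fails as written, and it occurs exactly where you enlarge $\Aut_0$ to include translations. You allow $\Phi_0$ to carry constant terms $a_0\in R_0$ (necessarily nilpotent --- note that $a_0$ must be nilpotent for the substitution to converge, not merely ``nilpotent-plus-unit''; it is the coefficient of $t$ that must be a unit) and $b_0^i\in R_1$, but in the normality argument you then assert that every automorphism satisfies $\Psi(\mathfrak m)=\mathfrak m$. A translation $t\mapsto t+a_0$ with $a_0\neq 0$ does not preserve $\mathfrak m$, and in fact $\Aut_+$ is \emph{not} normal once such translations are admitted: take $n=0$ and $R=\mathbb C[\epsilon]/(\epsilon^2)$, and conjugate $t\mapsto t+t^2\in\Aut_+$ by $t\mapsto t+\epsilon$; the result is $t\mapsto(1\pm 2\epsilon)t+t^2$, whose linear part is not the identity, so the conjugate leaves $\Aut_+$. (A similar computation shows the affine subgroup is not normal either, so no semidirect product survives in either order.) The statement and your proof are both rescued by reading $\Aut_0$ as the group of \emph{linear} changes of coordinates, i.e.\ the image of $\mathrm{GL}(1|n)(R)$ acting on $\langle t\mid\theta^1,\dots,\theta^n\rangle$, and $\Aut_R(R[[1|n]])$ as the $\mathfrak m$-preserving (continuous, origin-fixing) automorphisms: then the symbol map $\Aut_R(R[[1|n]])\to \mathrm{GL}(1|n)(R)$ is a genuine group homomorphism with kernel $\Aut_+$ and section $\Aut_0$, and everything else you wrote goes through verbatim. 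Since the paper's own phrase ``generated by affine maps'' is ambiguous on precisely this point, you should state explicitly which convention you adopt before invoking normality.
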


An automorphism $\Phi$ is said to be generated by a vector field $X\in \Der_{R,+}(R[[ 1|n]])$ if $\exp(X)=\Phi$ in \eqref{exponencial 2}.

\subsubsection{The group $\Aut_R^\delta(R[[1|n]])$}

Let $R$ be a super algebra, over $R[[ 1|n]]$ consider the $1|n$-free super module $\Omega^1:=(\Der_{R}(R[[1|n]]))^*$ with generators $\{dt|d\theta^1\cdots d\theta^n\}$, then $\Ber(\Omega^1)$ has a generator $\Delta_0:=[dt|d\theta^1\cdots d\theta^n]$, called \emph{super volume form}. There is a group homomorphism $\Aut_R(R[[1|n]])\to End_R(\Ber(\Omega^1))^*$ given by
	\begin{equation}\label{ber}
		\begin{split}
			\Aut_R(R[[1|n]])\to&\ End_R(\Ber(\Omega^1))^*\\
			\Phi\mapsto&\ \Ber(J\Phi)\text{, where $J\Phi$ is the Jacobian of $\Phi$}.
		\end{split}
	\end{equation}
This homomorphism depends on the basis chosen. Also, $\Phi^*\Delta_0=\Ber(J\Phi)\Delta_0$.

We will denote the kernel of \eqref{ber} by $\Aut_R^\delta(R[[1|n]])$. When there is no confusion we just write $\Aut^\delta[[1|n]]$ and we say that such automorphisms preserve the Berezinian.

For an even vector field $X\in\Der_{R}(R[[ 1|n]])$, differentiating \eqref{ber} we have
	\begin{equation*}
		L_X\Delta_0=\sdiv_{\Delta_0}(X)\Delta_0.
	\end{equation*}
In particular, for a vector field $X\in\Der_{R,+}(R[[ 1|n]])$ we get the relation
	\begin{equation*}
		\Ber(\exp(X))=\exp(\sdiv_{\Delta_0}(X)),
	\end{equation*}
then, $\Phi=\exp(X)$ preserves the Berezinian if and only if $\sdiv(X)=0$. This define the subalgebra
\begin{equation*}
S(1|N)_+:=\Der_{R,+}(R[[ 1|n]])\cap S(1|N).
\end{equation*}
For the group $\Aut^\delta(R[[ 1|n]])$, we will denote by 
\begin{equation*}
\Aut^\delta_0(R[[ 1|n]]):=\Aut_0(R[[ 1|n]])\cap \Aut^\delta(R[[ 1|n]] )
\end{equation*}
and 
\begin{equation*}
\Aut^\delta_+(R[[ 1|n]]):=\Aut_+(R[[ 1|n]])\cap \Aut^\delta(R[[ 1|n]] ),
\end{equation*}
so we get the surjection
\begin{equation*}
S(1|N)_+\stackrel{\exp}{\longrightarrow}\Aut^\delta_+(R[[1|n]]).
\end{equation*}

From Lemma \ref{descomposicion 0} we get: 

\begin{proposition}\label{descomposicion}
		The group $\Aut^\delta_R(R[[ 1|n]])$ is a semi-direct product of $\Aut^\delta_0(R[[ 1|n]])$ and $\Aut^\delta_+(R[[ 1|n]])$.
\end{proposition}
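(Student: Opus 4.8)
The plan is to intersect the semidirect product decomposition of Proposition \ref{descomposicion 0} with the normal subgroup $\Aut^\delta_R(R[[1|n]])$, the point being that the two factors $\Aut_0$ and $\Aut_+$ are ``transverse'' to the Berezinian character in complementary ways. Concretely, it suffices to show: if $\Phi\in\Aut^\delta_R(R[[1|n]])$ is written as a composition $\Phi=\Phi_0\circ\Phi_+$ with $\Phi_0\in\Aut_0(R[[1|n]])$ and $\Phi_+\in\Aut_+(R[[1|n]])$ (which is possible by Proposition \ref{descomposicion 0}), then both $\Phi_0$ and $\Phi_+$ already preserve $\Delta_0$. Granting this, $\Aut^\delta=\Aut^\delta_0\cdot\Aut^\delta_+$; moreover $\Aut^\delta_0\cap\Aut^\delta_+\subseteq\Aut_0\cap\Aut_+=\{\mathrm{id}\}$, and $\Aut^\delta_+=\Aut_+\cap\Aut^\delta$ is normal in $\Aut^\delta$ because $\Aut_+$ is normal in $\Aut$. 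These three facts are exactly the assertion that $\Aut^\delta_R(R[[1|n]])$ is the internal semidirect product of $\Aut^\delta_0(R[[1|n]])$ and $\Aut^\delta_+(R[[1|n]])$.

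To prove the key claim I would track the form $\Delta_0$ through \eqref{ber}, using $\Phi^*\Delta_0=\Ber(J\Phi)\Delta_0$. Set $f_0:=\Ber(J\Phi_0)$ and $f_+:=\Ber(J\Phi_+)$. Since $\Phi_0$ acts by an affine-linear substitution on $\{t|\theta^1,\dots,\theta^n\}$, its Jacobian has vanishing lower-left block, so the formula \eqref{bere} reduces to $\det A\,\det D^{-1}$ with the diagonal blocks $A,D$ constant; hence $f_0\in R_0^\times$ is a constant, fixed by every automorphism of $R[[1|n]]$ over $R$. On the other hand $\Phi_+\equiv\mathrm{id}$ modulo $\mathfrak m^2$, so $J\Phi_+\equiv\mathrm{Id}$ modulo $\mathfrak m=\langle t|\theta^1,\dots,\theta^n\rangle$, and therefore $f_+\in 1+\mathfrak m$. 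Pulling $\Delta_0$ back through $\Phi=\Phi_0\circ\Phi_+$ and using that $\Phi_+$ fixes the constant $f_0$,
\begin{equation*}
\Delta_0 \;=\; \Phi^*\Delta_0 \;=\; \Phi_+^*\Phi_0^*\Delta_0 \;=\; \Phi_+^*\big(f_0\,\Delta_0\big) \;=\; f_0\,\Phi_+^*\Delta_0 \;=\; f_0 f_+\,\Delta_0 ,
\end{equation*}
so $f_0 f_+=1$ in $R[[1|n]]_0$. Applying the augmentation $R[[1|n]]\to R$ that kills $t$ and the $\theta^i$ (it fixes $f_0$ and sends $f_+$ to $1$) gives $f_0=1$, whence also $f_+=1$. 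Thus $\Phi_0\in\Aut^\delta_0$ and $\Phi_+\in\Aut^\delta_+$, as claimed.

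The one place that needs care is the assertion that $\Ber(J\Phi_0)$ is a constant: one must pin down exactly which substitutions constitute $\Aut_0(R[[1|n]])$ — linear in $t$, linear in the $\theta^i$, possibly composed with a translation of $t$ by an element of $\mathfrak m^2\cap R_0$ — and check that in every case the lower-left block of $J\Phi_0$ vanishes and the diagonal blocks are $R$-valued, so that $\Ber(J\Phi_0)=\det A\,\det D^{-1}$ lands in $R_0$. The rest is bookkeeping: one should keep the group law $\Phi*\Psi=\Psi\circ\Phi$ consistent with the contravariance of pullback, so that the semidirect factorization of a group element matches the composition $\Phi=\Phi_0\circ\Phi_+$ of substitutions used above. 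I do not expect a genuine obstacle here; this is the standard principle that a normal subgroup which is homogeneous for a compatible grading — here the order of vanishing at the origin — splits along a semidirect decomposition adapted to that grading.
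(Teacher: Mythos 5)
Your proof is correct and follows the same route the paper intends: the paper simply asserts the statement as a consequence of Proposition \ref{descomposicion 0} without further argument, and you supply exactly the missing verification, namely that in the factorization $\Phi=\Phi_0\circ\Phi_+$ one has $\Ber(J\Phi_0)\in R_0^\times$ constant and $\Ber(J\Phi_+)\in 1+\mathfrak m$, so $\Ber(J\Phi)=1$ forces both factors into $\Aut^\delta$. The normality and trivial-intersection checks are routine, as you note, so nothing further is needed.
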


\subsubsection{The group $\Aut_R^\omega(R[[1|n]])$}

Another important group of automorphism is given by the automorphisms preserving the even nondegenerate form
	\begin{equation}\label{forma super simplectica}
		\omega=dz+\theta^1d\theta^1+\cdots+\theta^nd\theta^n.
	\end{equation}

Take an element $\Phi\in\Aut (R[[1|n]])$, we will say that $\Phi\in\Aut_R ^\omega(R[[1|n]])$ if $\Phi$ preserves this form up to multiplication, in other words if $\Phi^*\omega=f\omega$, for some function $f\in R[[1|n]]$. In case there is no confusion, we will write $\Aut ^\omega[[1|n]]$. Observe that $\Aut ^\omega[[1|n]]$ is a super group with the composition as multiplication.

We say that a vector field $X\in K(1|n)$ if $L_X\omega=f\omega$, with $\omega$ as \eqref{forma super simplectica} for some function $f\in R[[1|n]]$ and write $K(1|n)_+:=\Der_{R,+}(R[[ 1|n]])\cap K(1|n)$. We can notice that for a vector field $X\in K(1|n)_+$ we have $\exp(X)\in\Aut ^\omega[[1|n]]$. The group generated by automorphisms $\phi=\exp(X)$, $X\in K(1|n)_+$, is denoted by $\Aut _+^\omega[[1|n]]$, also we have
	\begin{equation*}
		\Aut _+^\omega[[1|n]]=\Aut ^\omega[[1|n]]\cap \Aut _+[[1|n]].
	\end{equation*}

\begin{observation}
	The vector fields generating automorphisms over $R[[ 1|n]]$ preserving \eqref{forma super simplectica}, up to multiplication, are given by:
		\begin{equation*}
			D^f=f\partial_z+\frac{1}{2}(-1)^{j}\sum_{i=1}^{n}(D^if)D^i,
		\end{equation*}
	where $D^i=\theta^i\partial_z+\partial_{\theta^i}$, for any $f\in R[[ 1|n]]_j$. 
	
	For $n\neq 2$, (cf. \cite{manin2014topics}), any change of coordinates that preserves $\omega$, up to multiplication by a function, comes from fields in $K(1|n)$.
	
	When $n=2$, there exists an exterior automorphism given by 
	\begin{equation*}
	(z|\theta^1,\theta^2)\mapsto(z|\theta^2,\theta^1).
	\end{equation*}
	Also, for $n=2$, we have the inclusion $K(1|2)\subset S(1|2)$.
\end{observation}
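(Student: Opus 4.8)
The plan is to trade the coordinate frame for the frame adapted to $\omega$, after which each assertion becomes a short computation. Write $D^i=\theta^i\partial_z+\partial_{\theta^i}$ and observe (in the paper's sign conventions) that $\{\partial_z,D^1,\dots,D^n\}$ is an $R[[1|n]]$-basis of $\Der_R(R[[1|n]])$ with dual coframe $\{\omega,d\theta^1,\dots,d\theta^n\}$, since $\iota_{\partial_z}\omega=1$, $\iota_{D^i}\omega=0$ and $\iota_{D^i}d\theta^j=\delta^{ij}$. Given a homogeneous field of parity $j$, write it as $X=f\partial_z+\sum_i g^i D^i$ with $f$ of parity $j$ and $g^i$ of parity $j+1$, so that $\iota_X\omega=f$. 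By the super Cartan formula $L_X\omega=d(\iota_X\omega)+\iota_X(d\omega)$; here $d\omega=\sum_i d\theta^i\wedge d\theta^i$, and expanding $df$ in the adapted coframe as $df=(\partial_z f)\,\omega+\sum_i(\pm D^i f)\,d\theta^i$ and contracting $\sum_i d\theta^i\wedge d\theta^i$ by $X$ yields $L_X\omega=(\partial_z f)\,\omega+\sum_i c^i\,d\theta^i$, where the coefficient $c^i$ vanishes precisely when $g^i=\tfrac12(-1)^j D^i f$. Hence $L_X\omega$ is a function multiple of $\omega$ if and only if $X=D^f$, and then $L_X\omega=(\partial_z f)\,\omega$; for $D^f\in K(1|n)_+$ this exponentiates to $\exp(D^f)^*\omega=\exp(\partial_z f)\,\omega$. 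This proves the first assertion and identifies $K(1|n)$ with $\{D^f:f\in R[[1|n]]\}$.

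For the statement that when $n\neq 2$ every coordinate change rescaling $\omega$ comes from $K(1|n)$, I would use the semidirect decomposition of Proposition \ref{descomposicion 0}. On the prounipotent factor $\exp$ is surjective, and the computation above shows its elements rescaling $\omega$ are exactly $\exp$ of fields in $K(1|n)_+$; on the order-zero (affine) factor the condition $\Phi^*\omega=f\omega$ forces the induced linear transformation of $\langle\theta^1,\dots,\theta^n\rangle$ to preserve the standard symmetric bilinear form up to a scalar. For $n\neq2$ the corresponding conformal orthogonal group is connected, so every such transformation is an exponential of a linear field in $K(1|n)$; this is the classical fact, for which I would simply cite \cite{manin2014topics}. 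For $n=2$ that group has two components: the nontrivial one is represented by the swap $\sigma\colon(z|\theta^1,\theta^2)\mapsto(z|\theta^2,\theta^1)$, and $\sigma^*\omega=dz+\theta^2 d\theta^2+\theta^1 d\theta^1=\omega$, so $\sigma\in\Aut^\omega[[1|2]]$, while its odd part has determinant $-1$ and hence $\sigma$ is not an exponential of a field in $K(1|2)$; this is the claimed exterior automorphism.

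Finally, for $K(1|2)\subset S(1|2)$ I would expand $D^f$ in the coordinate frame, $D^f=\bigl(f+\tfrac12(-1)^j\textstyle\sum_i(D^i f)\theta^i\bigr)\partial_z+\tfrac12(-1)^j\sum_i(D^i f)\,\partial_{\theta^i}$, and compute the super divergence using $\partial_{\theta^i}\partial_{\theta^i}=0$, $\theta^i\theta^i=0$, $[\partial_z,\partial_{\theta^i}]=0$ and $\partial_{\theta^i}(\theta^i g)=g-\theta^i\partial_{\theta^i}g$. The $\partial_z$-term contributes $\partial_z f-\tfrac12\sum_i\theta^i\partial_{\theta^i}\partial_z f$ and the $\partial_{\theta^i}$-terms contribute $-\tfrac12\bigl(n\,\partial_z f-\sum_i\theta^i\partial_{\theta^i}\partial_z f\bigr)$, so $\sdiv(D^f)=(1-\tfrac n2)\,\partial_z f$. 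In particular $\sdiv(D^f)=0$ for all $f$ when $n=2$, giving $K(1|2)\subset S(1|2)$ (and, incidentally, showing why the inclusion fails for $n\neq2$).

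The conceptual content is light; the real work, and the main source of error, is tracking Koszul signs — in $d\omega$, in the contraction of $d\theta^i\wedge d\theta^i$ by a homogeneous field, and in the passage between $df$ and its components in the $\omega$-adapted coframe — so that the normalization lands on the paper's $\tfrac12(-1)^j$, and likewise the $(-1)^{|A_i|}$ signs in the divergence. The $n\neq2$ versus $n=2$ dichotomy itself reduces to the elementary observation that the conformal orthogonal group of a nondegenerate symmetric form is connected in dimension $1$ and in dimension $\geq3$ but not in dimension $2$.
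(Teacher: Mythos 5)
The paper offers no argument for this Observation --- it is stated as a known fact with a citation to \cite{manin2014topics} --- so your write-up is not competing with an existing proof but supplying one. Your first and last computations are sound: the Cartan-formula characterization of $K(1|n)$ in the coframe $\{\omega,d\theta^1,\dots,d\theta^n\}$ dual to $\{\partial_z,D^1,\dots,D^n\}$ is the standard route (modulo the Koszul signs you rightly flag as the danger point), and I checked the divergence computation: expanding $D^f$ in the coordinate frame and applying $\sdiv$ does give $\sdiv(D^f)=(1-\tfrac n2)\partial_z f$, which yields $K(1|2)\subset S(1|2)$ and explains why the inclusion is special to $n=2$.

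The gap is in the $n\neq 2$ step. The conformal orthogonal group $CO(n,\mathbb C)=\{A:A^{T}A=\lambda I,\ \lambda\in\mathbb C^{*}\}$ is \emph{not} connected for every $n\geq 3$: when $n$ is even, $\lambda^{n/2}$ is well defined and $A\mapsto\det(A)/\lambda^{n/2}$ is a surjective homomorphism onto $\{\pm 1\}$, so $CO(n,\mathbb C)$ has two components for \emph{every} even $n$, not just $n=2$. (For $n$ odd it is connected, since $-I$ is a scalar of determinant $-1$ and hence $\mathbb C^{*}\cdot SO(n)=\mathbb C^{*}\cdot O(n)$.) Combined with the semidirect decomposition of Proposition \ref{descomposicion 0}, which makes the linear part of an automorphism well defined and trivial on $\exp(K(1|n)_+)$, your own argument then shows that for even $n\geq 4$ a transposition of two odd coordinates preserves $\omega$ exactly yet lies outside the group generated by $K(1|n)$-fields, just as the swap does for $n=2$. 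So your proof establishes the second assertion only for odd $n$, and the step ``connected in dimension $\geq 3$'' is exactly where it breaks; either the assertion needs a caveat for even $n\geq 4$ or a different argument is required there. Since the paper only ever uses this observation for $n=1$ and $n=2$ (the element $\alpha$ of $\Aut^\omega[[1|4]]$ used in Section \ref{sec:duality} has even-determinant linear part), the gap is harmless downstream, but as a proof of the statement as written it is incomplete.
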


\section{Super manifolds}
 \label{sec:super-manifolds}

\begin{definition}
	Let $R$ be a super algebra, we define the spectrum $\text{Spec}(R)$ as the set of prime ideals with the Zariski topology. Over $\text{Spec}(R)$ define the sheaf of super algebras $\mathcal O_{R}$ generated by
		\begin{equation*}
			\begin{split}
				\mathcal O_{R}(\text{Spec} R_f)=R_f
			\end{split}
		\end{equation*}
	for any non-nilpotent element $f\in R_0$.
	
	A super scheme is a pair $(M,\mathcal O_M)$ where $M$ is a topological space, $\mathcal O_M$ is a sheaf of super algebras and there exists an open covering of $M$, $\{U_i\}_{i\in I}$, such that $(U_i,\mathcal O_{M}|_{U_i})=(\text{Spec} R_i,\mathcal O_{R_i})$, for some super algebra $R_i$.
	
	We define morphisms of super schemes, sheaves of modules, etc, similar to morphisms of schemes, sheaves of modules, etc.
\end{definition}

\begin{example}
	A scheme $(M,\mathcal O_M)$ defines naturally a super scheme $(M,\mathcal O_M)$ where for any open set $U\subset M$, the super algebra $\mathcal O_M(U)$ is purely even.
\end{example}

\begin{observation}\label{reduccion de haces}
	Let $M$ a topological space and $\mathcal F$ a sheaf of super algebras. The projection given in Observation \ref{reduction} induce a morphism of sheaves given by $\mathcal F(U)\to\mathcal F(U)_{\mathrm{rd}}$, for any open set $U\subset M$. From this projection, for a super scheme $(M,\mathcal O_{M})$ we obtain a super scheme through $\mathcal O_{M}\to\mathcal O_{M,\text{rd}}$ by the pair $M_{\text{rd}}=(M,\mathcal O_{M,\mathrm{rd}})$ and a closed embedding
		\begin{equation*}
			M_{\text{rd}}\hookrightarrow M.
		\end{equation*}
	we will say that $M_{\mathrm{rd}}$ is the \emph{reduced super scheme} of $M$.
\end{observation}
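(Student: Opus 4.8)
The plan is to reduce the statement to two elementary facts — that $R\mapsto R_{\mathrm{rd}}$ is functorial and that it commutes with localization — and then to run the usual passage from affine algebras to schemes. First I would record functoriality: a homomorphism of super algebras $\phi\colon R\to S$ is even, so $\phi(R_1)\subseteq S_1$, hence $\phi(R_1+R_1^2)\subseteq S_1+S_1^2$, and therefore $\phi$ descends to a homomorphism $R_{\mathrm{rd}}\to S_{\mathrm{rd}}$, compatibly with composition and identities. Applying this to the restriction maps of a sheaf of super algebras $\mathcal{F}$ on a space $M$ shows that $U\mapsto\mathcal{F}(U)_{\mathrm{rd}}$, with the induced restrictions, is a presheaf and that the quotient maps $\mathcal{F}(U)\to\mathcal{F}(U)_{\mathrm{rd}}$ form a morphism of presheaves; I would then \emph{define} $\mathcal{F}_{\mathrm{rd}}$, and in particular $\mathcal{O}_{M,\mathrm{rd}}$, to be the associated sheaf. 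Since sheafification is exact on stalks and each $R\to R_{\mathrm{rd}}$ is surjective, the induced morphism $\mathcal{O}_M\to\mathcal{O}_{M,\mathrm{rd}}$ is surjective on stalks.

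Next I would verify that $M_{\mathrm{rd}}:=(M,\mathcal{O}_{M,\mathrm{rd}})$ is again a super scheme, which it suffices to check on an affine chart $(U,\mathcal{O}_M|_U)\simeq(\spec R,\mathcal{O}_R)$. Two points are needed. (i) That the underlying space is unchanged: because $\car k\neq 2$ and $R$ is supercommutative, every odd $a\in R$ satisfies $2a^2=a\cdot a+a\cdot a=0$, so $a^2=0$, and $(ab)^2=abab=-a^2b^2=0$ for odd $a,b$; hence $J:=R_1+R_1^2$ is contained in the nilradical of $R$ and so in every prime, giving a homeomorphism $\spec R_{\mathrm{rd}}\simeq\spec R$. (ii) That localization commutes with reduction: from $(R_f)_1=(R_1)_f$ and $(R_f)_1^2=(R_1^2)_f$ one gets $(R_f)_{\mathrm{rd}}\simeq(R_{\mathrm{rd}})_{\bar f}$ for every non-nilpotent even $f$. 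By (ii), on basic opens the presheaf $D(f)\mapsto (R_f)_{\mathrm{rd}}$ coincides with the structure-sheaf data of $\spec R_{\mathrm{rd}}$, so its sheafification is $\mathcal{O}_{R_{\mathrm{rd}}}$ and $(\spec R,\mathcal{O}_{R,\mathrm{rd}})\simeq(\spec R_{\mathrm{rd}},\mathcal{O}_{R_{\mathrm{rd}}})$ — a (purely even) super scheme. The functoriality from the first step makes these local identifications agree on overlaps, so $M_{\mathrm{rd}}$ is a super scheme and $\mathcal{O}_M\to\mathcal{O}_{M,\mathrm{rd}}$ a morphism of sheaves of super algebras over the fixed space $M$.

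Finally I would observe that the morphism $M_{\mathrm{rd}}\to M$ which is the identity on topological spaces together with the surjection $\mathcal{O}_M\twoheadrightarrow\mathcal{O}_{M,\mathrm{rd}}$ is, by the definition of morphisms of super schemes modeled on the classical notion, a closed immersion — the asserted closed embedding. As a coda one checks that the module-level reduction of Observation~\ref{reduction} globalizes: for a sheaf of $\mathcal{O}_M$-modules $\mathcal{F}$ one has $\mathcal{F}_{\mathrm{rd}}\simeq\mathcal{O}_{M,\mathrm{rd}}\otimes_{\mathcal{O}_M}\mathcal{F}$, compatibly with the above. The only step carrying genuine content rather than bookkeeping is (i), that reduction leaves the underlying space alone, i.e.\ $J\subseteq\sqrt{0}$; this is precisely where the hypothesis $\car k\neq 2$ is used, to force odd elements to square to zero. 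Everything else is the routine verification that sheafification and localization behave as expected.
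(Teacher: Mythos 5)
The paper records this as an Observation without supplying a proof, and your write-up is the correct standard verification of exactly what is implicitly being used: functoriality of $R\mapsto R_{\mathrm{rd}}$, the fact that $J=R_1+R_1^2$ lies in the nilradical (so the underlying space is unchanged), compatibility of reduction with localization, and surjectivity of $\mathcal O_M\twoheadrightarrow\mathcal O_{M,\mathrm{rd}}$ giving the closed embedding. No gaps; your extra care about sheafifying the quotient presheaf is a refinement the paper glosses over.
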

	
\begin{definition}
	A smooth super curve $C$ of dimension $1|n$ is a smooth connected super manifold $(C,\mathcal O_C)$ of dimension $1|n$.
\end{definition}

\begin{observation}\label{variedades split}
	Let $E$ be a locally free sheaf over a smooth scheme $M_0$, we obtain a sheaf of super algebras through Observation \ref{super algebras a traves de modulos}:
		\begin{equation*}
			U\to S_{\mathcal O_{M_0}(U)}[E(U)],
		\end{equation*}
	we will write $S_{\mathcal O_{M_0}}(E)$ and define the super scheme $M=(M_0,S_{\mathcal O_{M_0}}(E))$.
	
	For a super scheme $(M,\mathcal O_{M})$ we will say that it \emph{splits} if is isomorphic to $(M_0,S_{\mathcal O_{M_0}}(E))$. In this case, we have an inclusion
		\begin{equation*}
			\begin{split}
				\mathcal O_{M_0}\to\mathcal O_{M}=S_{\mathcal O_{M_0}}(E)
			\end{split}
		\end{equation*}
	so, there exists a projection
		\begin{equation*}
			M\twoheadrightarrow M_{\text{rd}}.
		\end{equation*}
		
	From now on, we are going to consider just super schemes that are locally split.
	
	For a general family of super schemes $M\to S$ if the base $S$ is not purely even we cannot assure that there exists a projection $M\to M_{\text{rd}}$ such that the following diagram commutes:
		\begin{equation*}\xymatrix{
			M\ar[r]\ar[rd]&\ar[d] M_{\text{rd}}\\
			&S}
		\end{equation*}
	When such projection exists we are going to say that $M$ is \emph{projected}.
\end{observation}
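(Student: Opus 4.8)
The substantive content of the observation to be established is its last assertion about split super schemes: if $M=(M_0,S_{\cO_{M_0}}(E))$, then the $\cO_{M_0}$-algebra structure of $S_{\cO_{M_0}}(E)$ gives a canonical inclusion $\cO_{M_0}\hookrightarrow\cO_M$, and this inclusion induces a projection $M\twoheadrightarrow M_{\text{rd}}$ (a retraction of the reduction embedding). The plan is to reduce everything to a local computation inside a Grassmann algebra and then to observe that the relevant maps are canonical, so that they glue over $M_0$. Throughout I use that a morphism of super schemes is the same datum, read contravariantly, as a morphism of the structure sheaves of super algebras.

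First I would identify $M_{\text{rd}}$ with $M_0$. Over an open $U\subset M_0$ on which $E$ is free with odd frame $\theta^1,\dots,\theta^n$, the construction of Example \ref{super algebras a traves de modulos} gives $\cO_M(U)=S_R[E(U)]=R[\theta^1,\dots,\theta^n]$ with $R=\cO_{M_0}(U)$ purely even. Here the odd part is spanned over $R$ by the $\theta$-monomials of odd total degree, while the square of the odd part produces all $\theta$-monomials of positive even degree; hence $J=(\cO_M(U))_1+(\cO_M(U))_1^2$ is exactly the augmentation ideal generated by $\theta^1,\dots,\theta^n$. By the definition of the reduced ring we then get $\cO_{M,\text{rd}}(U)=\cO_M(U)/J\simeq R=\cO_{M_0}(U)$, which is precisely the isomorphism recorded in the example following Lemma \ref{caracterizacion de reduccion}. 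Since these isomorphisms are induced by the canonical augmentation, they are independent of the chosen frame and glue to $\cO_{M,\text{rd}}\simeq\cO_{M_0}$, i.e. $M_{\text{rd}}=(M_0,\cO_{M_0})=M_0$.

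Next I would produce the projection and check it is a retraction. Locally the $\cO_{M_0}$-algebra structure of $S_{\cO_{M_0}}(E)$ is the section $R\to R[\theta^1,\dots,\theta^n]$ onto the degree-zero summand; being canonical it glues to an inclusion of sheaves $\pi^\sharp\colon\cO_{M_0}\hookrightarrow\cO_M$, and contravariance yields a morphism $\pi\colon M\to M_{\text{rd}}$. On the other hand the closed embedding $j\colon M_{\text{rd}}\hookrightarrow M$ of Observation \ref{reduccion de haces} corresponds to the augmentation $j^\sharp\colon\cO_M\twoheadrightarrow\cO_{M,\text{rd}}=\cO_{M_0}$. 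To see that $\pi$ is a projection it suffices to verify $\pi\circ j=\mathrm{id}_{M_{\text{rd}}}$, which contravariantly is the statement that $j^\sharp\circ\pi^\sharp\colon\cO_{M_0}\hookrightarrow\cO_M\twoheadrightarrow\cO_{M_0}$ is the identity; but this composite is the inclusion of the degree-zero part followed by the quotient by the augmentation ideal, which is the identity locally and hence globally. This establishes the claim.

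The point I expect to be delicate, and the one worth emphasizing, is that the inclusion $\pi^\sharp$ — unlike the augmentation $j^\sharp$ — is \emph{not} canonical: it depends on the chosen splitting $\cO_M\simeq S_{\cO_{M_0}}(E)$. For a globally split $M$ a fixed splitting supplies it at once, and the above argument is complete. For a merely locally split $M$, or for a family $M\to S$ over a base $S$ that is not purely even, the local splittings need not agree on overlaps (respectively, need not be compatible with the structure map to $S$), so the local retractions need not glue to a global $M\to M_{\text{rd}}$. This is exactly the obstruction recorded in the final paragraph of the observation, and it motivates the notion of a \emph{projected} family; I would not attempt to prove existence in that generality, only to note that the argument above is special to the split case.
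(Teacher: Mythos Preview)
The paper does not supply a proof for this observation; it is stated as a definition-cum-remark and the existence of the projection $M\twoheadrightarrow M_{\text{rd}}$ for split $M$ is treated as evident from the inclusion $\cO_{M_0}\to S_{\cO_{M_0}}(E)$. Your argument is correct and simply makes explicit what the paper leaves implicit: the local identification $\cO_{M,\text{rd}}\simeq\cO_{M_0}$ via the augmentation, the gluing of the degree-zero inclusions to a sheaf map $\cO_{M_0}\hookrightarrow\cO_M$, and the verification that the resulting $\pi$ retracts the reduction embedding. Your closing paragraph on why this fails for merely locally split $M$ or over non-even bases is a useful elaboration that the paper only hints at.
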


\begin{definition}
	A \emph{super manifold} is a super scheme $(M,\mathcal O_M)$, such that the sheaf of $\mathcal O_M$-modules given by $\text{Der}(\mathcal O_M):=\cT_M$ is a locally free sheaf of $\mathcal O_M$-modules. For a super manifold $M$, we say that it has dimension $m|n$ if $\cT_M$ has rank $m|n$. A \emph{super curve} is a connected $1|n$-super manifold.
	
	For a family $M\to S$ we say that is a \emph{family of super manifolds} if $\text{Der}_{\mathcal O_S}(\mathcal O_M):=\cT_{M/S}$ is locally free, and that have relative dimension $m|n$ if $\cT_{M/S}$ has rank $m|n$. Similarly, we define a \emph{family of curves over a super scheme} $S$.
\end{definition}

\begin{observation}
	For a closed point, $p\in M$, there exists an open set $U$ such that $\mathcal O_M(U)=S_{\mathcal O_{M,\text{rd}}}(E)$, for some $O_{M,\text{rd}}(U)$-free module $E$.
\end{observation}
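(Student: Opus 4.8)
The plan is to deduce the statement from the standing hypothesis that every super scheme we consider is \emph{locally split} (Observation \ref{variedades split}), combined with the elementary fact that a locally free sheaf on an ordinary scheme becomes free after restricting to a small enough affine open. The content of the observation is to upgrade a local splitting $\mathcal{O}_M\cong S_{\mathcal{O}_{M_0}}(\mathcal{E})$ with $\mathcal{E}$ merely \emph{locally} free to a presentation over the reduction by an honestly free module.

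First I would use the local-splitting hypothesis at $p$: there exist an open $V\ni p$, a smooth scheme $M_0$, and a locally free sheaf $\mathcal{E}$ on $M_0$ (purely odd of rank $0|n$, where $1|n$ is the dimension of $M$) together with an isomorphism $\mathcal{O}_M|_V\cong S_{\mathcal{O}_{M_0}}(\mathcal{E})$. Next I would identify $M_0$ with the reduced super scheme over $V$. By Observation \ref{reduccion de haces} the reduction $\mathcal{O}_M\to\mathcal{O}_{M,\mathrm{rd}}$ is computed sectionwise and commutes with restriction to opens, while the Example computing the reduced ring of a Grassmann algebra (following Lemma \ref{caracterizacion de reduccion}) shows that the composite $\mathcal{O}_{M_0}\hookrightarrow S_{\mathcal{O}_{M_0}}(\mathcal{E})\twoheadrightarrow\big(S_{\mathcal{O}_{M_0}}(\mathcal{E})\big)_{\mathrm{rd}}$ is an isomorphism. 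Hence $\mathcal{O}_{M,\mathrm{rd}}|_V\cong\mathcal{O}_{M_0}$, so $M_{\mathrm{rd}}$ and $M_0$ agree over $V$.

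Then I would shrink $V$ to a basic affine open $U\ni p$ on which $\mathcal{E}$ is trivial, $\mathcal{E}(U)\cong\mathcal{O}_{M_0}(U)^{0|n}$; this is possible since $\mathcal{E}$ is a locally free sheaf on the scheme $M_0$ (of constant rank, by connectedness of the super curve). Put $E:=\mathcal{E}(U)$, a free $\mathcal{O}_{M_0}(U)=\mathcal{O}_{M,\mathrm{rd}}(U)$-module. Because the construction $S_R[M]$ is computed on sections over affines and is independent of the chosen generators (Example \ref{super algebras a traves de modulos}), one obtains $\mathcal{O}_M(U)=S_{\mathcal{O}_{M_0}(U)}[\mathcal{E}(U)]=S_{\mathcal{O}_{M,\mathrm{rd}}(U)}(E)$, which is the claim.

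I do not anticipate a genuine obstacle: the statement is essentially bookkeeping built into the definitions. The two points requiring a little care are (i) verifying that the reduction of a split model recovers exactly the base scheme $M_0$ — this rests on the Grassmann-algebra computation of the reduced ring together with the locality of reduction — and (ii) arranging $U$ to lie inside the splitting neighborhood $V$, be affine, and trivialize $\mathcal{E}$ all at once, which is routine since locally free sheaves on schemes trivialize on a basis of affine opens.
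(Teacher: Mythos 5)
Your proposal is correct and follows exactly the route the paper intends: the paper offers no separate argument for this observation, treating it as immediate from the standing convention (Observation \ref{variedades split}) that all super schemes considered are locally split, and your write-up simply makes explicit the two routine verifications involved (that the reduction of a split model $S_{\mathcal O_{M_0}}(\mathcal E)$ recovers $\mathcal O_{M_0}$, and that one may shrink to an affine open trivializing $\mathcal E$). No gaps.
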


\begin{example}
	Let $M$ an $m|n$ a super manifold, the tangent and cotangent bundle are rank $m|n$ locally free $\cO_M$-modules. Observe, that if we have local coordinates over an open set $U$, given by $(z_1,\dots,z_m|\theta^1,\dots,\theta^N)$, then the tangent space is locally trivialized, by $\langle\partial_{z_1},\dots,\partial_{z_m}|\partial_{\theta^1},\dots,\partial_{\theta^N}\rangle$, and the cotangent space is locally trivialized, by $\langle d{z_1},\dots,d{z_m}|$ $d{\theta^1},\dots,d{\theta^N}\rangle$.
\end{example}

\begin{definition}
	For a super manifold $M$ with a rank $m|n$ locally free sheaf $E$ we can define the \emph{Berezinean sheaf} $\Ber(E)$ as follows: for an open set $U\subset M$ such that $E(U)$ is free $\mathcal O_M(U)$-module we define $\Ber(E)(U)$ as Observation \ref{modulo bereziniano}.
	
	For two open sets $U,V$ and change of coordinates $\Phi$, then the $\Ber(J\Phi)$ gives us the cocycle of $\Ber(E)$. The sheaf $\Ber(E)$ has rank $1|0$ if $n$ is even and rank $0|1$ if $n$ is odd.
	
	Set $\Ber_M:=\Ber(\Omega_M)$, and for a family $M\to S$, consider $\Ber_{M/S}:=\Ber(\Omega_{M/S})$.
\end{definition}

\begin{example}
	Let $M=(M_0,S_{\mathcal O_{M_0}}(E))$ be a split super manifold, then we can take the local coordinates over an open set $U\subset M_0$, $\{z_1,\dots,z_m|\theta^1,\dots,\theta^n\}$, in this coordinates the cotangent space is locally trivialized, over $U$, by $\langle d{z_1},\dots,d{z_m}|d{\theta^1},\dots,d{\theta^n}\rangle$, so $\Ber_M(U)= \mathcal O_M(U) [d{z_1},\dots,d{z_m}|d{\theta^1},\dots,d{\theta^n}]$. For another coordinates $\{w_1,\dots,$ $w_m|\rho^1,\dots,\rho^n\}$ with 
		\begin{equation*}
			\begin{split}
				w_i=&\ \phi_i(z_1,\dots,z_m)\\
				\rho^j=&\ \theta^1a_{j1}(z_1,\dots,z_m)+\dots+\theta^Na_{jN}(z_1,\dots,z_m),\ j=1,\dots,N.
			\end{split}
		\end{equation*}
		
	The change of coordinates of the cotangent bundle is given by
		\begin{equation*}
			\left[\begin{matrix}
			\partial_{z_k}\phi_l&B\\
			0&a_{ij}
			\end{matrix}\right],
		\end{equation*}
	where $B=(\partial_{z_i}\rho^j)$, then the change of coordinates of the cotangent bundle is given by
		\begin{equation*}
			\det (\partial_{z_k}\phi_l)\det(a_{ij})^{-1}.
		\end{equation*}
	Using the closed embedding, $j:M_0\to M$, we get the isomorphism
		\begin{equation*}
			j^*\Ber_M\simeq \Omega^m_{M_0}\otimes \det E^*.
		\end{equation*}
	Observe that in this case $\Ber_M$ is a trivial bundle when $\Omega^m_{M_0}\simeq \det E$ as line bundles over $M_0$.
\end{example}

Let $C$ be a super curve, for a section $\Delta\in H^0(C,\Ber_C)$ and coordinate patch $\{(U_i,\Phi_i)\}$, then there exists a family of functions $f_i\in H^0(U_i,\mathcal O_{C})$ such that
	\begin{equation}\label{trivializacion}
		\Delta|_{U_i}=f_i[dz_i|d\theta_i^1\cdots d\theta_i^N].
	\end{equation}


\begin{observation}
	Let us consider a $1|n$-super curve $C$, a coordinate patch $U\subset C$ with a trivialization $\Phi$, and a nonvanishing section $\Delta\in H^0(U,\Ber_C)$ for some $f\in H^0(U,\mathcal O_{C}^*)$ as \eqref{trivializacion}. Taking an even function $F(z|\theta^1,\cdots ,\theta^n)$, and shrinking $U$ if is necessary, with
		\begin{equation*}
			\partial_zF(z|\theta^1,\cdots,\theta^n)=f(z|\theta^1,\cdots,\theta^n),
		\end{equation*}
	then the system of coordinates $\Psi=(w|\rho^1,\cdots,\rho^n)$, given by 
		\begin{equation*}
			\begin{split}
				w  = &\ F(z|\theta^1,\cdots ,\theta^N),\\
				\rho^i = & \ \theta^i,\ i=1\dots,n;
			\end{split}
		\end{equation*}
	verifies $\Delta=[dw|d\rho^1\cdots d\rho^n]$. We say that such coordinate system $\Psi$ is \emph{compatible} with the section $\Delta$.
	
	Finally, for a nonvanishing section $\Delta\in H^0(C,\Ber_C)$, then there exists a coordinate system of $C$, $\{(U_i,\Phi_i)\}_i$ such that
		\begin{equation*}
			\Delta|_{U_i}=[dz_i|d\theta_i^1\cdots d\theta_i^N].
		\end{equation*}
		
	For any pair of coordinates $\Phi$, $\Psi$ defined over the same open set $U$ both compatible with $\Delta|_{U}$, then the change of coordinates preserves the Berezinian.
	
	For a fixed curve $C$ and a nonvanishing section $\Delta\in H^0(C,\Ber_C)$ we will only consider coordinates compatible with $\Delta$.
\end{observation}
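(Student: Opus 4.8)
The statement reduces to its first, purely local, assertion; the global statement then follows by patching, and the last one from multiplicativity of the Berezinian cocycle. The plan is: construct $F$; check that $(w|\rho^1,\dots,\rho^n):=(F|\theta^1,\dots,\theta^n)$ is a genuine change of coordinates; compute the Berezinian of its Jacobian; and deduce $\Delta=[dw|d\rho^1\cdots d\rho^n]$ from the transformation rule $\Phi^*\Delta_0=\Ber(J\Phi)\Delta_0$ recorded just after \eqref{ber}. To build $F$, expand $f=\sum_I f_I(z)\,\theta^I$, where $I$ ranges over the even-cardinality subsets of $\{1,\dots,n\}$ (so that $f$ is even) and each $f_I$ is a function of $z$ alone. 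Since $\car k=0$, each $f_I$ admits an antiderivative $g_I$ with $\partial_z g_I=f_I$ — term by term in a formal or algebraic neighbourhood, and after shrinking $U$ to a polydisc in the analytic setting — and $F:=\sum_I g_I(z)\,\theta^I$ is even (because $\partial_z$ preserves parity and $f$ is even) and satisfies $\partial_z F=f$.

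The Jacobian of $(z|\theta^1,\dots,\theta^n)\mapsto(F|\theta^1,\dots,\theta^n)$ has even--even entry $(f)$, odd--odd block $\mathrm{id}_n$, vanishing odd--even block, and even--odd block $(\partial_{\theta^1}F,\dots,\partial_{\theta^n}F)$; since $f\in\cO_C^*$ and $\mathrm{id}_n$ is invertible, this matrix is invertible, so by the inverse function theorem for super schemes (cf.\ the structure of $\Aut_R(R[[1|n]])$ in Proposition \ref{descomposicion 0}) the assignment is invertible and $\Psi=(w|\rho^1,\dots,\rho^n)$ is a legitimate coordinate patch on $U$. By the first property of the Berezinian listed after \eqref{bere}, $\Ber(J)=\det(f)\det(\mathrm{id}_n)^{-1}=f$; hence the local generators of $\Ber_C(U)$ in the two systems are related by $[dw|d\rho^1\cdots d\rho^n]=\Ber(J)\,[dz|d\theta^1\cdots d\theta^n]=f\,[dz|d\theta^1\cdots d\theta^n]=\Delta|_U$, which is exactly compatibility of $\Psi$ with $\Delta$.

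For the global assertion, choose a cover of $C$ by coordinate patches $\{(U_i,\Phi_i)\}$ (possible since $C$ is locally split); on each, $\Delta|_{U_i}=f_i\,[dz_i|d\theta_i^1\cdots d\theta_i^n]$, and $f_i$ is a unit because $\Delta$ is nonvanishing and a function on a super scheme is a unit exactly when its reduction is nowhere zero. Applying the local step to each patch — refining the cover if the shrinking is needed — and replacing each $\Phi_i$ by the resulting $\Psi_i$ produces the desired atlas. Finally, if $\Phi=(z|\theta)$ and $\Psi=(w|\rho)$ over the same $U$ are both compatible with $\Delta|_U$, then $[dz|d\theta^1\cdots d\theta^n]=\Delta|_U=[dw|d\rho^1\cdots d\rho^n]$; writing $g$ for the change of coordinates and using the cocycle relation, $\Ber(Jg)\,[dz|d\theta^1\cdots d\theta^n]=[dw|d\rho^1\cdots d\rho^n]$, whence $\Ber(Jg)=1$ since $[dz|d\theta^1\cdots d\theta^n]$ freely generates $\Ber_C(U)$; that is, $g$ preserves the Berezinian.

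The only delicate point — and the real content of the argument — is the construction of the antiderivative $F$: this is where $\car k=0$ and (in the analytic category) the shrinking of $U$ are used, and one must also verify that $F$ genuinely produces an invertible coordinate change and not merely a formal symbol, which is precisely the invertibility of the block-triangular Jacobian above. The rest is bookkeeping with the Berezinian transformation law and its cocycle property, all of which is already in place from the discussion around \eqref{ber} and \eqref{bere}.
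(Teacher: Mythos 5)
Your proposal is correct and follows the same route the paper takes: construct the antiderivative $F$ of $f$ (shrinking $U$ as needed in characteristic zero), observe that the Jacobian of $(z|\theta)\mapsto(F|\theta)$ is block upper-triangular with Berezinian $f$, and conclude $[dw|d\rho^1\cdots d\rho^n]=f\,[dz|d\theta^1\cdots d\theta^n]=\Delta|_U$, with the global atlas and the cocycle argument for Berezinian-preservation following formally. You merely make explicit the verifications (parity of $F$, invertibility of the coordinate change, the patching) that the paper leaves implicit.
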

	
\subsection{$S(2)$-super curves}

Fix a base super scheme $S$, we will consider curves and bundles relative to $S$.

\begin{definition}
	An \emph{$S(1|2)$-super curve} is a pair $(C,\Delta)$, where $C\to S$ is a super curve and a nonvanishing section $\Delta\in H^0(C,\Ber_{C/S})$. 
\end{definition}

From Proposition \ref{descomposicion}, for any change of coordinates $\Phi\in \Aut ^\delta(R[[1|2]])$ there exists a divergence free field $X\in S(1|2)_+$ and $T\in\Aut ^\delta_0(R[[1|2]])$ such that $\Phi(z|\theta^1,\theta^2)=\exp(X) (T(z|\theta^1,\theta^2))$. Our interest is to study such automorphisms where $X\in S(2)_+:=S(2)\cap \Der_{R,+}(R[[1|2]])$, when this happens we write $\Phi\in\Aut ^{\Delta}(R[[1|2]])$ and observe that $\Aut ^{\Delta}(R[[1|2]])$ is a subgroup of $\Aut ^\delta(R[[1|2]])$ where
	\begin{equation*}
		\Aut ^\delta_0(R[[1|2]]) < \Aut ^\Delta(R[[1|2]]) < \Aut ^\delta(R[[1|2]]).
	\end{equation*}
When there is no confusion, we write $\Aut ^\Delta[[1|2]]=\Aut ^\Delta(R[[1|2]])$.
	
\begin{definition}
	An \emph{$S(2)$-super curve} is an $S(1|2)$-super curve $(C,\Delta)$ such that there exists a system of coordinates $\{U_i,\Phi_i\}$ compatible with $\Delta$ and change of coordinates $\Phi_{ij}=\Phi_i\circ\Phi^{-1}_j\in \Aut ^\Delta[[1|2]]$. We say that a family of curves $C\to S$ is a family of $S(2)$-super curves, if for any $y\in S$, $C_y$ is an $S(2)$-super curve.
\end{definition}	

Over a curve $C\to S$ with a nonvanishing section $\Delta\in H^0(C,\Ber_{C/S})$, over an open set we can define the space of vector fields:
	\begin{equation*}
		S(1|2)(U):=\{X\in\cT_C(U):\mathrm{sdiv}_{\Delta}X=0\},
	\end{equation*}
and
	\begin{equation*}\label{fibrado S(2)}
		S(2)(U):=[S(1|2)(U),S(1|2)(U)].
	\end{equation*}
Observe that such spaces do not define a sheaf of $\mathcal O_C$-modules. They define, however, a sheaf of $\pi^*\mathcal O_S$-modules.

Finally, we get a $\pi^*\mathcal O_S$-module $K$ defined over an open set $U\subset C_0$:
	\begin{equation*}
		K(U):=\frac{S(1|2)(U)} {S(2)(U)}.
	\end{equation*}
On the other hand, we have the isomorphisms
	\begin{equation*}
		\begin{split}
			\exp:S(1|2)_+\to\Aut ^\delta_+(R[[ 1|n]]),\\
			\exp:S(2)_+\to\Aut ^\Delta_+(R[[ 1|n]]),
		\end{split}
	\end{equation*}
and $\Aut ^\delta_0(R[[ 1|n]])=\Aut ^\Delta_0(R[[ 1|n]])$, so, we have the isomorphism:
	\begin{equation}\label{isomorfismo}
			\exp:\frac{S(1|2)}{S(2)}\to\frac{\Aut ^\delta(R[[ 1|n]])}{\Aut ^\Delta(R[[ 1|n]])}\simeq \mathbb G_a.
	\end{equation}

\begin{observation}
	For an $S(1|2)$-super curve $(C,\Delta)$ we can construct the bundle of coordinates preserving the Berezinian, $\Aut ^\delta_C$, considered as the set of pairs $(Z,\Phi)$ for $Z$ a $S$-point in $C$ and $\Phi$ a local system of coordinates compatible with the section $\Delta$. This bundle is an $\Aut ^\delta(R[[ 1|2]])$-bundle, and observe that the quotient:
		\begin{equation}\label{fibrado}
			\Aut^\Delta[[1|2]]\backslash\Aut^\delta_C\to C
		\end{equation}
	is an $\mathbb G_a$-bundle. From \eqref{isomorfismo}, we get that $\Aut^\Delta[[1|2]]\backslash\Aut^\delta_C$ is isomorphic to $K$.
\end{observation}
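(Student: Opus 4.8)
The statement packages three claims: that $\Aut^\delta_C$ is a principal bundle, that the quotient \eqref{fibrado} is a $\mathbb{G}_a$-bundle, and that this bundle coincides with $K$. I would treat them in that order. For the first, $\Aut^\delta_C$ is the Berezinian-preserving analogue of the usual bundle of formal coordinates on a curve, so this amounts to a formal check. The group $\Aut^\delta(R[[1|2]])$ acts on $\Aut^\delta_C$ on the right by $(Z,\Phi)\cdot g=(Z,\Phi\circ g)$; the action is free because $\Phi\circ g=\Phi\circ g'$ forces $g=g'$, and transitive on the fibre over each $S$-point $Z$ because, as recalled in the discussion of compatible coordinates in Section~\ref{sec:super-manifolds}, any two coordinate systems around $Z$ compatible with $\Delta$ differ by a change of coordinates preserving the Berezinian. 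Since $\Delta$-compatible patches exist locally (same discussion), each such $(U_i,\Phi_i)$ gives a section $Z\mapsto(Z,\Phi_i)$ of $\Aut^\delta_C$ over $U_i$ by expanding $\Phi_i$ around $Z$, hence a local trivialisation, with transition cocycle $\Phi_{ij}=\Phi_i\circ\Phi_j^{-1}\in\Aut^\delta(R[[1|2]])$.

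For the second claim the key point is that $\Aut^\Delta[[1|2]]$ is \emph{normal} in $\Aut^\delta(R[[1|2]])$. By the semidirect product decomposition of Proposition~\ref{descomposicion} together with the equality $\Aut^\delta_0[[1|2]]=\Aut^\Delta_0[[1|2]]$, it suffices to see that $\Aut^\Delta_+[[1|2]]=\exp(S(2)_+)$ is stable under conjugation by all of $\Aut^\delta(R[[1|2]])$; this holds because $S(2)=[S(1|2),S(1|2)]$ is preserved by every Lie superalgebra automorphism of $S(1|2)$, in particular by $\Ad(g)$, and $g\exp(X)g^{-1}=\exp(\Ad(g)X)$. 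For a principal $G$-bundle $P$ and a normal subgroup $H\triangleleft G$, the quotient $H\backslash P$ is a principal $G/H$-bundle; applying this with $G=\Aut^\delta(R[[1|2]])$, $H=\Aut^\Delta[[1|2]]$ and using $G/H\simeq\mathbb{G}_a$ from \eqref{isomorfismo} shows that \eqref{fibrado} is a $\mathbb{G}_a$-bundle, trivialised over each $U_i$ by the image of $Z\mapsto(Z,\Phi_i)$ and with transition cocycle the image $(\overline{\Phi_{ij}})$ of $(\Phi_{ij})$ in $\mathbb{G}_a$.

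The remaining point is to match this $\mathbb{G}_a$-bundle with the one attached to the sheaf $U\mapsto S(1|2)(U)/S(2)(U)$. The bridge is the $\Aut^\delta(R[[1|2]])$-equivariant isomorphism $\exp$ of \eqref{isomorfismo}: on a compatible patch $(U_i,\Phi_i)$ the coordinate $\Phi_i$ identifies a divergence-free vector field on $U_i$ with its formal expansion, and by the criterion of Observation~\ref{generators} (an $X$ lies in $S(2)$ iff $\partial_{\theta^1}\partial_{\theta^2}(X\cdot z)=0$) this realises $K(U_i)$ as the fibre-model of \eqref{fibrado} over $U_i$; equivariance of $\exp$ then forces the two families of local trivialisations to share the cocycle $(\overline{\Phi_{ij}})$, which yields the isomorphism. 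I expect this last step to be the main obstacle --- not because any single computation is hard, but because $K$ is defined intrinsically from the sheaf of divergence-free vector fields on $C$ whereas \eqref{fibrado} is built patch-by-patch from coordinate frames, so reconciling the two requires the Harish-Chandra-type dictionary turning germs of (divergence-free) vector fields into infinitesimal motions of frames, plus the verification that $X\mapsto\partial_{\theta^1}\partial_{\theta^2}(X\cdot z)$ is independent of the chosen compatible coordinate. Normality in the second step is the only other place where a short genuine argument, rather than bookkeeping, is needed.
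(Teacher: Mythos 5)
The paper states this observation without proof, so your proposal is essentially an attempt to supply the argument from scratch; your steps (1) and (3) are sound and consistent with the surrounding text (local existence of $\Delta$-compatible charts and the fact that any two such charts differ by an element of $\Aut^\delta[[1|2]]$ are exactly what the paper records just before Section 3.1). The genuine gap is in step (2), the normality of $\Aut^\Delta[[1|2]]$ in $\Aut^\delta[[1|2]]$. The reduction you propose --- ``since $\Aut^\delta_0=\Aut^\Delta_0$, it suffices that $\Aut^\Delta_+=\exp(S(2)_+)$ be stable under conjugation'' --- is not valid for a general semidirect product: if $G=G_0\ltimes G_+$ and $N_+\triangleleft G$, the subgroup $G_0\cdot N_+$ need not be normal, because conjugating $h_0\in G_0$ by $g_+\in G_+$ gives $h_0\cdot u$ with $u=(h_0^{-1}g_+h_0)\,g_+^{-1}\in G_+$, and nothing forces $u\in N_+$. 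In the present situation, knowing that $\Ad(g)$ preserves the characteristic ideal $S(2)$ only tells you that $\Ad(g)$ acts on the one-dimensional quotient $S(1|2)/S(2)=\mathbb{C}\,K$, $K=\theta^1\theta^2\partial_z$, through some character $c(g)\in\mathbb{G}_m$; if $c$ were nontrivial on $\Aut^\delta_0$, then $\Aut^\Delta$ would fail to be normal and, worse, the fibre bundle \eqref{fibrado} would only be an affine-line bundle with $\mathbb{G}_m\ltimes\mathbb{G}_a$-valued transition functions rather than a $\mathbb{G}_a$-torsor.

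The missing ingredient is therefore the stronger statement that $c\equiv 1$, i.e.\ that $\Ad(g)$ fixes the class of $K$ modulo $S(2)$ for every $g\in\Aut^\delta[[1|2]]$. For $g\in\Aut^\delta_+$ this is automatic from $[S(1|2),S(1|2)]\subseteq S(2)$, but for the linear part it is a computation that uses the Berezinian condition in an essential way: for $g:(z|\theta)\mapsto(az\,|\,b\theta)$ one finds $\Ad(g)K=\dfrac{\det(b)}{a}\,K$, and $\det(b)=a$ is precisely $\Ber(Jg)=1$. This is where the hypothesis that $(C,\Delta)$ is an $S(1|2)$-curve (and not merely a $W(1|2)$-curve) enters the proof, and it is the same fact you defer to step (3): the coordinate-independence of the criterion $X\mapsto\partial_{\theta^1}\partial_{\theta^2}(X\cdot z)$ on $\Delta$-compatible charts is equivalent to $c\equiv1$. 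It is also what the paper uses silently in the next Observation when it asserts that the constants $\lambda_{ij}$ in $\Phi_{ij}=\widetilde{\Phi}_{ij}\circ\exp(\lambda_{ij}\theta^1_i\theta^2_i\partial_{z_i})$ form an additive cocycle with values in $\pi^*\cO_S$. Once this one-line computation is added, your argument closes and the rest of the proposal (the torsor structure on $H\backslash P$ for $H$ normal, and the identification with $K$ via the equivariance of \eqref{isomorfismo}) goes through as you describe.
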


Now, we can reformulate the definition of $S(2)$-super curves:

\begin{proposition}\label{fibrado trivial es S2}
	An $S(2)$-super curve is an $S(1|2)$-super curve $(C,\Delta)$ such that the $\mathbb G_a$-bundle \eqref{fibrado} is trivial.
\label{prop:trivialga}
\end{proposition}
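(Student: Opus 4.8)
The plan is to show the two definitions of an $S(2)$-super curve are equivalent, where the ``new'' definition is the triviality of the $\mathbb{G}_a$-bundle in \eqref{fibrado}, namely $\Aut^\Delta[[1|2]]\backslash \Aut^\delta_C \to C$. Recall that an $S(1|2)$-super curve $(C,\Delta)$ always carries the torsor $\Aut^\delta_C$ of local coordinate systems compatible with $\Delta$, which is a principal $\Aut^\delta(R[[1|2]])$-bundle; by the observation preceding the statement the associated bundle \eqref{fibrado} is identified with the sheaf $K$ via the isomorphism \eqref{isomorfismo} $\Aut^\delta/\Aut^\Delta \simeq \mathbb{G}_a$. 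So it suffices to prove: there is a system of $\Delta$-compatible coordinates with transition maps in $\Aut^\Delta[[1|2]]$ if and only if this $\mathbb{G}_a$-torsor admits a global section.

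First I would treat the easy direction. If $(C,\Delta)$ is an $S(2)$-super curve in the sense of the original definition, pick the given atlas $\{U_i,\Phi_i\}$ with $\Phi_{ij}\in\Aut^\Delta[[1|2]]$. Each $\Phi_i$ is a point of $\Aut^\delta_C$ over $U_i$, hence maps to a point of $\Aut^\Delta[[1|2]]\backslash\Aut^\delta_C$ over $U_i$; because the transition functions $\Phi_{ij}$ already lie in $\Aut^\Delta$, these local points agree on overlaps and glue to a global section. Conversely, suppose \eqref{fibrado} has a global section $s\colon C \to \Aut^\Delta[[1|2]]\backslash\Aut^\delta_C$. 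Cover $C$ by coordinate patches $U_i$ small enough that the $\Aut^\Delta$-torsor $\Aut^\delta_C \to \Aut^\Delta\backslash\Aut^\delta_C$ is trivial over $s(U_i)$ (possible since this is a torsor under the prounipotent/pro-affine group $\Aut^\Delta$, so it is locally trivial on the base, which is $C$ after pulling back along $s$); this gives a lift of $s|_{U_i}$ to a section $\Phi_i$ of $\Aut^\delta_C$ over $U_i$, i.e.\ a $\Delta$-compatible coordinate system. On $U_i\cap U_j$ the two lifts $\Phi_i,\Phi_j$ project to the same point $s$, so they differ by an element $\Phi_{ij}\in\Aut^\Delta[[1|2]]$; thus the atlas $\{U_i,\Phi_i\}$ has transition maps in $\Aut^\Delta[[1|2]]$, which is exactly the original definition of an $S(2)$-super curve. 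The relative (family) version follows fibrewise from the definition of a family of $S(2)$-super curves, since both the torsor $\Aut^\delta_{C/S}$ and the construction of $K$ are compatible with base change to points $y\in S$.

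The main obstacle is the lifting step in the converse: one needs that the $\Aut^\Delta[[1|2]]$-torsor $\Aut^\delta_C$ over $\Aut^\Delta\backslash\Aut^\delta_C$ admits local sections over the image of $s$. This is where the structure theory from Section~\ref{sec:super-algebras} is used: by Proposition~\ref{descomposicion} the group $\Aut^\delta$ is a semidirect product of the finite-dimensional $\Aut^\delta_0$ and the prounipotent $\Aut^\delta_+$, and $\Aut^\Delta$ contains $\Aut^\delta_0$ with $\Aut^\delta/\Aut^\Delta \cong \Aut^\delta_+/\Aut^\Delta_+ \cong \mathbb{G}_a$ abelian; a torsor under a (pro)unipotent group on an affine-enough patch is trivial, so the lift exists locally. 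I expect this to be routine given the machinery already set up, so the proposition is really just the translation of the two descriptions through \eqref{isomorfismo} and \eqref{fibrado}, together with the local triviality of unipotent torsors; I would keep the write-up to a few lines invoking these facts rather than spelling out the pro-object bookkeeping.
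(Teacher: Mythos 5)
Your proposal is correct and follows essentially the same route as the paper: identify triviality of the $\mathbb G_a$-bundle \eqref{fibrado} with the existence of a global section, and unwind a section into an atlas of $\Delta$-compatible coordinates whose transition maps lie in $\Aut^\Delta[[1|2]]$. The only difference is that you spell out the local lifting of the section through the $\Aut^\Delta$-torsor $\Aut^\delta_C\to\Aut^\Delta\backslash\Aut^\delta_C$, a point the paper's proof passes over silently; this is a welcome clarification but not a different argument.
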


\begin{proof}
	Observe that the bundle \eqref{fibrado} is trivial if and only if it has a section. In this case, a section is a covering with trivializations $\{(U_i,\Phi_i)\}_{i}$ such that the change of coordinates $\Phi_{ij}:=\Phi_{j}\circ\Phi_{i}^{-1}\in \Aut ^\Delta(R[[ 1|2]])$.
	
	Finally, the bundle is trivial if and only if there exists a covering for $C$ with trivializations $\{(U_i,\Phi_i)\}_{i}$ compatible with $\Delta$ such that the change of coordinates $\Phi_{ij}\in \Aut ^\Delta[[ 1|2]]$, that is, $(C,\Delta)$ is an $S(2)$-super curve.
\end{proof}

\begin{observation}
	For a $1|2$-super curve over an even base $S$, $C\to S$, it is shown in \cite{Noja:2018edj} that there exists a similar class that measures if the curve is split. Suppose that $C$ is a $1|2$-super curve over a point with reduced curve $C_0$, then we have a sequence
		\begin{equation*}
			0\to \mathcal J \to \mathcal O_C\to \mathcal O_{C_0}\to 0,
		\end{equation*}	
	for which we have the inclusion $C_0\stackrel{j}{\to}C$. Now considering the curve $\overline C=(C_0,\mathcal O_{C,0})$ and the inclusion $C_0\stackrel{j}{\to}\overline C$ we get the sequence of sheaves of algebras over $\overline C$:
		\begin{equation}\label{secuencia exacta}
			0\to \det\mathcal F \to \mathcal O_{C,0}\to \mathcal O_{C_0}\to 0,
		\end{equation}
	for the rank 2 bundle $ \mathcal F= \mathcal J/ \mathcal J^2$, in $\det\mathcal F$ we consider the zero multiplication. Taking local splits $\pi_i:\mathcal O_{C_0}(U_i)\to \mathcal O_{C,0}(U_i)$ in \eqref{secuencia exacta} we can define
		\begin{equation}\label{clase}
			\omega_{ij}=\pi_i|_{U_i\cap U_j}-\pi_j|_{U_i\cap U_j}
		\end{equation}	
	observe that 
		\begin{equation*}
			\begin{split}
				\omega_{ij}(fg)=&\pi_i(fg)-\pi_j(fg)\\
				=&\pi_i(f)\pi_i(g)-\pi_j(f)\pi_i(g)\\
				=&\pi_i(f)\omega_{ij}(g)+\omega_{ij}(f)\pi_i(g)
			\end{split}
		\end{equation*}
	so $\omega_{ij}\in\cT_{C_0}\otimes\det\mathcal F(U_{ij})$. Additionally, $\omega_{ij}$ verifies the cocycle condition:
		\begin{multline*}
				\omega_{ij}+\omega_{jk}+\omega_{ki}=(\pi_i|_{U_i\cap U_j\cap U_k}-\pi_j|_{U_i\cap U_j\cap U_k})+(\pi_j|_{U_i\cap U_j\cap U_k}-\pi_k|_{U_i\cap U_j\cap U_k})\\
				+(\pi_k|_{U_i\cap U_j\cap U_k}-\pi_i|_{U_i\cap U_j\cap U_k}) = 0
		\end{multline*}
	so $\{\omega_{ij}\}\in H^1(C_0,T_{C_0}\otimes\det\mathcal F(U_{ij}))$.
	
	For an $S(1|2)$-super curve we have $\det\mathcal F=\Omega_{C_0}$, then $\{\omega_{ij}\}\in H^1(C_0,T_{C_0}\otimes\Omega_{C_0})=H^1(C_0,\mathcal O_{C_0})$. Let $\{\Phi_i=(z_i|\theta^1_i,\theta^2_i)\}$ local coordinates over $C$. For the change of coordinates $\phi_{ij}$ we get
		\begin{equation*}
			\begin{split}
				z_j=&\ F_{ij}(z_i)+G_{ij}(z_i)\theta^1_i\theta^2_i\\
				\theta^1_j=&\ \theta^1_ia_{11}(z_i)+\theta^2_ia_{12}(z_i)\\
				\theta^2_j=&\ \theta^1_ia_{21}(z_i)+\theta^2_ia_{22}(z_i).
			\end{split}
		\end{equation*}
	Since $C$ has a trivial Berezinian, then $G_{ij}=\lambda_{ij}\partial_{z_i}F_{ij}$, for $\lambda_{ij}$ a constant, then we have
		\begin{equation*}
			\begin{split}
				z_j=&\ F_{ij}(z_i)+\lambda_{ij}\partial_{z_i}F_{ij}(z_i)\theta^1_i\theta^2_i=F_{ij}(z_i+\lambda_{ij}\theta^1_i\theta^2_i)\\
				\theta^1_j=&\ \theta^1_ia_{11}(z_i)+\theta^2_ia_{12}(z_i)=g^1_{ij}(z_i|\theta^1_i,\theta^2_i)\\
				\theta^2_j=&\ \theta^1_ia_{21}(z_i)+\theta^2_ia_{22}(z_i)=g^2_{ij}(z_i|\theta^1_i,\theta^2_i).
			\end{split}
		\end{equation*}
 
	From the generators given in Observation \ref{generators}, we get that $\widetilde{\Phi}_{ij}=(F_{ij}|g^1_{ij},g^2_{ij})\in\Aut^\Delta[[1|2]]$ and $z_i+\lambda_{ij}\theta^1\theta^2_i=\exp(\lambda_{ij}\theta_i^1\theta^2_i\partial_{z_i})(z_i)$, then $\Phi_{ij}=\widetilde{\Phi}_{ij}\circ\exp(\lambda_{ij}\theta_i^1\theta^2_i\partial_{z_i})$, that is $\exp(\lambda_{ij}\theta_i^1\theta^2_i\partial_{z_i})$ gives the cocycle in \eqref{fibrado}.
	
	The relation of \eqref{clase} with the class \eqref{fibrado} is the following:
		\begin{equation*}
			\omega_{ij}(f)=\lambda_{ij}\theta_i^1\theta_i^2\partial_{z_i}f
		\end{equation*}
	for local coordinates $(z_i|\theta_i^1,\theta_i^2)$ over $U_i$.
	
	 In \cite{Noja:2018edj} it is proved that $C\to S$ is projected if and only if $\{\omega_{ij}\}\in H^1(C,\cO_{C/S})$ vanishes. Then we obtain
	 
	 \begin{theorem}\label{split}
	 	Every $S(2)$-super curve over a purely even base $S$ is split.
	 \end{theorem}
	
	In order to get a geometric interpretation of this,consider a $1|2$-super curve $C$, the inclusion $C_0\stackrel{j}{\to}\overline C$ and the space of differentials over $\Omega_{\overline C}$, we obtain that $j^*\Omega_{\overline C}$ is a rank 2 bundle over $C_0$ with a projection $j^*\Omega_{\overline C}\to\Omega_{ C_0}\to 0$. Actually, we get the sequence of $\mathcal O_{C_0}$-modules:
		\begin{equation}\label{extension}
			0\to \det \mathcal F \to j^*\Omega_{\overline C}\to\Omega_{ C_0}\to 0.
		\end{equation}
	As an extension of $\mathcal O_{C_0}$-modules, \eqref{extension}, is defined by an element of
		\begin{equation*}
			\text{Ext}^1(\Omega_{ C_0},\det \mathcal F)=H^1(\Omega_{ C_0}^*\otimes\det \mathcal F )=H^1(\cT_{ C_0}\otimes\det \mathcal F ),
		\end{equation*}
	and such element is the class $\{\omega_{ij}\}$ defined above.
	
	Now, if $ \det \mathcal F=\Omega_{C_0}$, then we have the sequence
		\begin{equation}\label{extension 2}
			0\to \Omega_{ C_0}\to j^*\Omega_{\overline C}\to\Omega_{ C_0}\to 0.
		\end{equation}
	
	This occurs when $C$ is a $S(1|2)$ curve, but to distinguish the element $K\in H^1(C_0,\pi^*\cO_S)\subseteq H^1(C_0,\mathcal O_{C_0})$, where $\pi:C\to S$, we have to notice that the sequence \eqref{extension 2} fits in the following diagram:
		\begin{gather}
			\begin{aligned}
				\xymatrix{
					&0&0&0&\\
					0 \ar[r] &\Omega_{C_0} \ar[r] \ar[u]  & L\otimes\Omega_{C_0} \ar[r]  \ar[u] & \Omega_{C_0}\ar[r] \ar[u] & 0\\
					0 \ar[r] &\mathcal O_{C_0} \ar[r]^{\alpha\hspace{0.35cm}} \ar[u]^d & L\otimes\mathcal O_{C_0} \ar[r] \ar[u]^{1\otimes d} & \mathcal O_{ C_0}\ar[r] \ar[u]^d & 0\\
					0 \ar[r] &\pi^*\cO_S \ar[r] \ar[u]  & L \ar[r] \ar[u] &\pi^*\cO_S\ar[r] \ar[u] & 0\\
					&0\ar[u] &0\ar[u] &0\ar[u] & }
			\end{aligned}
			\label{Fig1}
		\end{gather}
	where $L\otimes \Omega_{C_0}=j^*\Omega_{\overline C}$ and $L$ as an extension of $\pi^*\cO_S$-modules represents the class $\Gamma_C\in H^1(C_0,\pi^*\cO_S)$.
	
	Finally, in \cite{Nojathesis} and \cite{Donagi:2014hza} it is proved that each $1|2$ super curve, over a point, is defined by the data of $(C_0,\mathcal F,\{\omega_{ij}\})$, where $\{\omega_{ij}\}$ represents an extension of $\mathcal O_{C_0}$-modules:
		\begin{equation*}
			0\to \det \mathcal F \to \mathcal L\to\Omega_{ C_0}\to 0.
		\end{equation*}
	For an $S(1|2)$ we need $(C_0,\mathcal F,\Gamma_C)$, where $\Gamma_c$ represents an extension of $\pi^*\cO_S$-modules
		\begin{equation*}
			0 \to\pi^*\cO_S \to L \to\pi^*\cO_S\to 0
		\end{equation*}
	which gives rise to the diagram \eqref{Fig1}.
\end{observation}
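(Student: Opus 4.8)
The plan is to identify the obstruction for an $S(1|2)$-super curve to be an $S(2)$-super curve with the Donagi--Witten obstruction class $\{\omega_{ij}\}$ introduced above, and then to conclude by the classification of $1|2$-dimensional super curves over an even base. By Proposition~\ref{prop:trivialga}, the $S(1|2)$-super curve $(C,\Delta)$ is an $S(2)$-super curve exactly when the $\mathbb{G}_a$-bundle \eqref{fibrado} is trivial. For an arbitrary $S(1|2)$-super curve the computation made in the preceding paragraphs writes each transition function compatible with $\Delta$ as $\Phi_{ij}=\widetilde\Phi_{ij}\circ\exp(\lambda_{ij}\theta^1_i\theta^2_i\partial_{z_i})$ with $\widetilde\Phi_{ij}\in\Aut^\Delta[[1|2]]$ and $\lambda_{ij}$ constant along $C_0$; hence $\{\exp(\lambda_{ij}\theta^1_i\theta^2_i\partial_{z_i})\}$ is a cocycle for \eqref{fibrado}, and under the isomorphism \eqref{isomorfismo} it corresponds to $\{\lambda_{ij}\}$, a $1$-cocycle with values in $\pi^*\cO_S\subseteq\cO_{C_0}$. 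Therefore \eqref{fibrado} is trivial if and only if the class of $\{\lambda_{ij}\}$ in $H^1(C_0,\pi^*\cO_S)$ vanishes.

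Next I would transfer this vanishing to Noja's obstruction class. Since $(C,\Delta)$ is an $S(1|2)$-curve we have $\det\mathcal F\simeq\Omega_{C_0}$ and the canonical identification $\cT_{C_0}\otimes\Omega_{C_0}\simeq\cO_{C_0}$; with respect to these, the relation $\omega_{ij}(f)=\lambda_{ij}\theta^1_i\theta^2_i\partial_{z_i}f$ established above shows that the extension class $\{\omega_{ij}\}\in H^1(C_0,\cT_{C_0}\otimes\det\mathcal F)$ of \eqref{extension} is precisely the class of $\{\lambda_{ij}\}$. Consequently, if $(C,\Delta)$ is an $S(2)$-super curve, then $\{\omega_{ij}\}=0$.

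It remains to deduce that $C$ is split. By \cite{Donagi:2014hza,Nojathesis} a $1|2$-dimensional super curve over a purely even base is determined by the triple $(C_0,\mathcal F,\{\omega_{ij}\})$ and is split as soon as $\{\omega_{ij}\}=0$; equivalently, by \cite{Noja:2018edj} this vanishing means that $C\to S$ is projected, and a projected $1|2$ super curve over an even base is split, since a retraction $\cO_{C_0}\to\cO_C$ presents $\cO_C$ as generated over $\cO_{C_0}$ by the rank-$0|2$ bundle $\mathcal F=\mathcal J/\mathcal J^2$ (with $\mathcal J$ the ideal of $C_0\hookrightarrow C$) and the resulting surjection $S_{\cO_{C_0}}(\mathcal F)\to\cO_C$ is an isomorphism by a rank count. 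Finally, the trivialization $\Delta$ forces $\det\mathcal F\simeq\Omega_{C_0}$ (as in the computation of $j^*\Ber_M$), so $C$ is exhibited as a split $S(2)$-super curve whose defining rank-$2$ bundle has determinant $\Omega_{C_0}$.

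The bulk of this is already contained in the discussion above; the one point that genuinely needs care is the compatibility of the two descriptions of the obstruction — the group-theoretic one, through \eqref{fibrado} and \eqref{isomorfismo}, and the cohomological class $\{\omega_{ij}\}$ — together with the check that the constants $\lambda_{ij}$ form an honest cocycle with values in $\pi^*\cO_S$. Once this is in place, the step from \emph{projected} to \emph{split} is the standard rank count for $1|2$ super manifolds, and the theorem follows.
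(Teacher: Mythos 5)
Your argument follows the paper's own route: identify the $S(2)$-obstruction with the constant cocycle $\{\lambda_{ij}\}$ via Proposition \ref{prop:trivialga} and the isomorphism \eqref{isomorfismo}, note that $\{\omega_{ij}\}$ is the image of $\{\lambda_{ij}\}$ under $H^1(C_0,\pi^*\cO_S)\to H^1(C_0,\mathcal O_{C_0})$ so that it vanishes for an $S(2)$-curve, and invoke the criterion of \cite{Noja:2018edj} that $C$ is projected iff $\{\omega_{ij}\}=0$, plus the fact that a projected $1|2$-curve over an even base is split. The only (welcome) difference is that you make explicit the final step --- the rank count showing projected implies split for $1|2$-curves --- which the paper leaves implicit.
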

	
\subsection{$SUSY$-super curves}

\begin{definition}
	A $1|n$-super curve $C$ with a covering and coordinate systems $\{\phi_i\}_i$ such that any change of coordinates verifies $\phi_{ij}=\Phi_i\circ\Phi^{-1}_j\in\Aut ^\omega[[1|n]]$ we will say that $C$ is a \emph{$SUSY_n$-super curve}. 
	
	Observe that the local form \eqref{forma super simplectica} in coordinates $\phi_i=(z_i|\theta^1_i,\dots,\theta^n_i)$
		\begin{equation*}
			\omega_i=dz_i+\theta^1_id\theta^1_i+\cdots+\theta^n_id\theta^n_i 
		\end{equation*}
	is well defined, up to multiplication by a function over $C$. 
	
	Also, we can define an $SUSY_n$-super structure over the $1|n$-super curve $C$ as a locally free subsheaf $E \subset\cT_C$ of rank $0|n$, for which the Frobenius form 
		\begin{equation*}
			E\otimes E \to\cT_C/E	
		\end{equation*}
	is nondegenerate and split, i.e., it locally has an isotropic direct subsheaf of maximal possible rank $k$ for $n = 2k$ or $2k + 1$ (cf. \citealp{manin2014topics}). 
\end{definition}

\begin{observation}
	For a $1|2$-super curve $C$, if each change of coordinates comes from a $K(1|2)$ field we say that such curve is orientable. Oriented curves are also characterized by the fact that the distribution $E$ decomposes as
		\begin{equation*}
			E=\mathscr L\oplus \mathscr L^*\otimes\Omega_C,
		\end{equation*}
	for some line bundle $\mathscr{L}$ over $C_0$.
	
	Observe that in the non-oriented case such bundle $\mathscr{L}$ only exists locally.
	
	Since $K(1|2)\subseteq S(2)$, then any change of coordinates that comes from a field on $K(1|2)$ preserves the Berezinian and actually any orientable $SUSY_2$-super curve is an $S(2)$-super curve.	
\end{observation}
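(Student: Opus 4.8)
The plan is to push the algebra inclusion $K(1|2)\subseteq S(2)$ down to changes of coordinates, and then to recognise the resulting $S(1|2)$-super curve as an $S(2)$-super curve via Proposition~\ref{prop:trivialga}.

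Fix an orientable $SUSY_2$-super curve $C$. By definition it carries an atlas $\{(U_i,\Phi_i)\}$ whose changes of coordinates $\Phi_{ij}=\Phi_i\circ\Phi_j^{-1}$ preserve $\omega=dz+\theta^1d\theta^1+\theta^2d\theta^2$ up to a function and, moreover, are built from vector fields in $K(1|2)$ --- this last requirement is exactly what excludes the exterior swap $(z|\theta^1,\theta^2)\mapsto(z|\theta^2,\theta^1)$, which for $n=2$ also preserves $\omega$. Using the semidirect-product decomposition of $\Aut_R(R[[1|2]])$ (Proposition~\ref{descomposicion 0}) I would write $\Phi_{ij}=\exp(X_{ij})\circ T_{ij}$ with $X_{ij}$ pro-unipotent and $T_{ij}$ affine; the $SUSY_2$-condition forces $X_{ij}\in K(1|2)_+$ and $T_{ij}$ into the affine part of $\Aut^\omega[[1|2]]$, and \emph{orientability} forces the linear block of $T_{ij}$ into the identity component --- its rotation part lying in $SO(2)$ rather than $O(2)$ --- so that $\Ber(JT_{ij})=1$, hence $T_{ij}\in\Aut^\delta_0[[1|2]]=\Aut^\Delta_0[[1|2]]$. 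On the other hand, since $K(1|2)\subset S(1|2)$ for $n=2$, each $X_{ij}$ is divergence free, so $\Ber(J\exp(X_{ij}))=\exp(\sdiv X_{ij})=1$; combining the two factors, $\Ber(J\Phi_{ij})=1$, so the local generators $[dz_i\,|\,d\theta^1_id\theta^2_i]$ of $\Ber_C|_{U_i}$ agree on overlaps and patch to a nowhere-vanishing $\Delta\in H^0(C,\Ber_C)$ with each $\Phi_i$ compatible with it. Thus $(C,\Delta)$ is an $S(1|2)$-super curve, with $\Delta$ the section relative to which the divergence operator --- and hence $\Aut^\delta$, $\Aut^\Delta$ and their Lie algebras $S(1|2)_+\supset S(2)_+$ --- is taken.

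Next, reproving the inclusion $K(1|2)\subseteq S(2)$ from the explicit generators in Observation~\ref{generators} --- the contact field $D^f$ satisfies $\partial_{\theta^1}\partial_{\theta^2}(D^f\cdot z)=0$, a one-line computation --- gives $K(1|2)_+\subset S(2)_+$, so $\exp(X_{ij})\in\Aut^\Delta_+[[1|2]]$ via the isomorphism $\exp: S(2)_+\to\Aut^\Delta_+[[1|2]]$. Together with $T_{ij}\in\Aut^\Delta_0[[1|2]]$ this shows $\Phi_{ij}\in\Aut^\Delta[[1|2]]$; hence the $\mathbb G_a$-bundle \eqref{fibrado} attached to $(C,\Delta)$ has a section, and by Proposition~\ref{prop:trivialga} the pair $(C,\Delta)$ is an $S(2)$-super curve. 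For the accompanying structural claim --- that on an orientable curve the $SUSY_2$-distribution splits as $E=\mathscr L\oplus \mathscr L^*\otimes\Omega_C$ --- orientability upgrades the local isotropic rank $0|1$ subsheaf required by the nondegenerate-and-split condition on the Frobenius form $E\otimes E\to\cT_C/E$ to a global line bundle $\mathscr L$ on $C_0$; nondegeneracy of that form then identifies $E/\mathscr L\simeq\mathscr L^*\otimes(\cT_C/E)$, and a choice of complementary isotropic line splits $0\to\mathscr L\to E\to\mathscr L^*\otimes\Omega_C\to 0$, whereas in the non-orientable case only a local $\mathscr L$ survives.

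I expect the real work to be in the middle step: making rigorous that \emph{orientability} forces every finite part $T_{ij}$ into $\Aut^\delta_0$ --- equivalently, that a swap-type linear map of Berezinian $-1$ is the only obstruction --- so that $\Phi_{ij}\in\Aut^\Delta$ holds exactly and not merely up to the $\mathbb G_a$-class of Proposition~\ref{prop:trivialga}, together with the check that the global section $\Delta$ assembled from the $SUSY_2$-atlas is canonical enough that $S(2)_+$ is genuinely computed against it. Granting these points, the inclusion $K(1|2)\subseteq S(2)$ and the appeal to Proposition~\ref{prop:trivialga} close the argument at once.
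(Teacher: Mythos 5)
Your argument is correct and follows essentially the same route as the paper, which justifies this Observation only by the one-line remark that $K(1|2)\subseteq S(2)$ forces the transition functions to preserve the Berezinian and to lie in $\Aut^\Delta[[1|2]]$; your elaboration via the semidirect decomposition of Proposition \ref{descomposicion 0} and the triviality criterion of Proposition \ref{prop:trivialga} is a faithful expansion of exactly that idea. The "middle step" you flag (that orientability puts the linear block in the $SO(2)$-component so $\Ber(JT_{ij})=1$, the swap being the only Berezinian-$(-1)$ obstruction) is resolved exactly as you suggest and is consistent with the paper's remark that the exterior automorphism $(z|\theta^1,\theta^2)\mapsto(z|\theta^2,\theta^1)$ is what distinguishes oriented from general $SUSY_2$ curves.
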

	
\begin{observation}
	Suppose that $N=2n$, over $R[[1|2n]]$ consider the change of coordinates
		\begin{equation*}
			\begin{split}
				w=&z+i\left(\theta^{1}\theta^{2}+\cdots+\theta^{2n-1}\theta^{2n}\right),\\
				\rho^j=&-i(-\theta^{2j-1}+i\theta^{2j}),\ j=1\dots,n,\\
				\eta^j=&-i(\theta^{2j-1}+i\theta^{2j}),\ j=1\dots,n,
			\end{split}
		\end{equation*}
	we obtain that $\omega=dz+\theta^1d\theta^1+\cdots+\theta^nd\theta^n$ changes as
		\begin{equation}\label{otra estructura N=2n}
			\widetilde{\omega}=dz+\rho^1d\eta^1+\cdots+\rho^nd\eta^n
		\end{equation}
	so, the group of automorphisms of $R[[1|2n]]$ that preserve $\omega$ up to multiplication by a function coincides with the group of automorphisms of 
		\begin{equation*}
			R[[ w|\rho^1,\cdots,\rho^n,\eta^1,\cdots,\eta^n]]
		\end{equation*}
	that preserve \eqref{otra estructura N=2n} up to multiplication by a function.
	
	Observe that for a $1|2n$-super curve $C$ for which \eqref{otra estructura N=2n} is well defined, up to multiplication by a function, then $C$ is a $SUSY$-super curve, the converse is not necessarily true.
\end{observation}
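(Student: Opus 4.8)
The plan is to establish the observation in three steps: the explicit verification that the substitution sends $\omega$ to $\widetilde\omega$, the resulting identification of the two stabilizer subgroups, and the geometric consequence together with the failure of its converse. Throughout I write the contact form as $\omega=dz+\sum_{i=1}^{2n}\theta^i d\theta^i$ (with $N=2n$ odd variables the sum in the statement runs to $2n$), and I record that the substitution decouples into the $2\times 2$ blocks $(\theta^{2j-1},\theta^{2j})\mapsto(\rho^j,\eta^j)$ with $\rho^j=i\theta^{2j-1}+\theta^{2j}$ and $\eta^j=-i\theta^{2j-1}+\theta^{2j}$, and likewise $w=z+i\sum_j\theta^{2j-1}\theta^{2j}$. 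Because $\sum_j\rho^j d\eta^j$, the correction $w-z$, and $\sum_i\theta^i d\theta^i$ are all block-diagonal in $j$ (no term mixes indices from different blocks), it suffices to check the form identity for a single block, i.e.\ in the case $n=1$, and then sum.

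First I would carry out the $n=1$ computation. Expanding $\rho^1 d\eta^1=(i\theta^1+\theta^2)(-i\,d\theta^1+d\theta^2)$ gives the symmetric part $\theta^1 d\theta^1+\theta^2 d\theta^2$ together with the antisymmetric cross terms $i(\theta^1 d\theta^2-\theta^2 d\theta^1)$, while $dw-dz=i\,d(\theta^1\theta^2)$ equals $-i(\theta^1 d\theta^2-\theta^2 d\theta^1)$ by the graded Leibniz rule. These cancel, leaving $dw+\rho^1 d\eta^1=dz+\theta^1 d\theta^1+\theta^2 d\theta^2$, and summing over the $n$ blocks yields $\widetilde\omega=\omega$ with no residual factor. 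I expect the only delicate point to be the sign and parity bookkeeping of $d$ acting on products of odd generators: the cancellation requires the convention under which $d\theta^a$ commutes with the odd generators, and I would fix that convention once and propagate it, since it is the single place where a wrong sign would destroy the identity.

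Next, for the statement about automorphism groups, I would note that the linear part of the substitution on the odd variables is block-diagonal with $2\times 2$ blocks $\left(\begin{smallmatrix} i & 1\\ -i & 1\end{smallmatrix}\right)$ of determinant $2i\neq 0$ (we work over $\mathbb C$). Hence the substitution is invertible and is an automorphism of the same completed super algebra, identifying $R[[z|\theta^1,\dots,\theta^{2n}]]$ with $R[[w|\rho^1,\dots,\rho^n,\eta^1,\dots,\eta^n]]$. Since this identification carries $\omega$ to $\widetilde\omega$ by the first step, a change of coordinates preserves $\omega$ up to a functional factor if and only if it preserves $\widetilde\omega$ up to a functional factor; conjugating by the substitution therefore matches $\Aut^\omega$ with the stabilizer of $\widetilde\omega$, which is exactly the claimed coincidence.

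Finally, for the geometric statement, suppose $C$ is a $1|2n$-super curve on which $\widetilde\omega$ is globally defined up to multiplication by a function. Then each transition function preserves $\widetilde\omega$ up to a factor, so by the previous step it preserves $\omega$ up to a factor; the transitions thus lie in $\Aut^\omega[[1|2n]]$ and $C$ is a $SUSY_{2n}$-super curve by definition, hence a SUSY-super curve (the distribution $E\subset\cT_C$ determined by $\omega$ provides the required nondegenerate, split Frobenius structure). For the failure of the converse I would argue that a globally defined $\widetilde\omega$, up to scale, amounts to a global splitting of the distribution $E$ into the $\rho$- and $\eta$-directions, which breaks the symmetry between the two isotropic factors and is an orientation-type datum that a general SUSY-super curve need not admit; this is the same phenomenon as a non-orientable $SUSY_2$-super curve (in the sense of the earlier observation), whose distribution carries no global decomposition $E=\mathscr L\oplus\mathscr L^*\otimes\Omega_C$. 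The one genuinely non-formal task in this last item is to exhibit an explicit SUSY-super curve whose isotropic splitting has nontrivial monodromy, which I expect to be the main obstacle of the whole argument.
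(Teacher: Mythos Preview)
Your proposal is correct. The paper states this as an observation without supplying a proof, so your block-diagonal reduction to $n=1$ followed by the explicit computation of $\rho^1 d\eta^1$ and $dw-dz$ is precisely the verification the reader is expected to carry out; the conjugation argument for the automorphism groups and the orientation-type obstruction you identify for the converse are likewise the intended content. One small remark: the paper writes $\widetilde\omega=dz+\cdots$ rather than $dw+\cdots$, which is an abuse of notation (the even coordinate in the new chart is again called $z$); your interpretation as $dw$ is the correct one, and your computation shows $\widetilde\omega=\omega$ on the nose, not merely up to a factor.
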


Consider the inclusion $R[[1|n]]\hookrightarrow R[[1|2n]]$ given by the identification
	\begin{equation*}
		R[[1|2n]]=R[[1|n]] [\rho^1,\dots,\rho^n].
	\end{equation*}
Let $\Phi=(F|\phi^1,\dots,\phi^n)\in \Aut [[ 1|n]]$ and consider the super function $\widetilde{\Phi}=(F|\phi^1,\dots,\phi^n,\eta^1,\dots,\eta^n)$. The pullback of the form \eqref{otra estructura N=2n} is given by
	\begin{equation*}
		\begin{split}
			\widetilde{\Phi}^*(dz+\rho^1d\theta^1+\dots+\rho^nd\theta^n) = & \partial_{z}Fdz-\partial_{\theta^1}Fd\theta^1-\dots -\partial_{\theta^n}Fd\theta^n \\
			&+\eta^1(\partial_{z}\phi^1dz+\partial_{\theta^1}\phi^1d\theta^1+\dots +\partial_{\theta^n}\phi^1d\theta^n)+\cdots\\
			&\hspace{0.9cm} + \eta^n(\partial_{z}\phi^ndz+\partial_{\theta^1}\phi^nd\theta^1+\dots +\partial_{\theta^n}\phi^nd\theta^n)\\
			= & (\partial_{z}F+ \eta^1\partial_{z}\phi^1+\cdots+\eta^n\partial_{z}\phi^n)dz\\
			&\hspace{0.4cm}+(-\partial_{\theta^1}F+\eta^1\partial_{z}\phi^1+\cdots+\eta^n\partial_{z}\phi^n)d\theta^1+\cdots\\
			&\hspace{1.4cm}+(-\partial_{\theta^n}F+\eta^1\partial_{z}\phi^1+\cdots+\eta^n\partial_{z}\phi^n)d\theta^n.
		\end{split}
	\end{equation*}

Then, defining the differential operators $D_j=\rho^j\partial_z+\partial_{\theta^j}$, the function $\widetilde\Phi\in\Aut^\omega[[1|2n]]$ if and only if 
	\begin{equation}\label{coordenadas n a 2n} 
		D_iF=\eta^1D_i\phi^1+\cdots+\eta^nD_i\phi^n,\ i=1,\dots,n. 
	\end{equation}
Since the matrix $(D_i\phi^j)_{ij}$ is invertible, we get:
	\begin{equation}\label{coordenadas susy 2k}
		\left[\begin{matrix} D_1\phi^1&\cdots & D_1\phi^n\\\vdots& &\vdots\\D_n\phi^1&\cdots & D_n\phi^n\end{matrix}\right]^{-1} \left[\begin{matrix} D_1 F\\\vdots\\D_n F\end{matrix}\right] = \left[\begin{matrix}\eta^1\\\vdots\\\eta^n\end{matrix}\right].
	\end{equation}	
With these coordinates, $\widetilde\Phi=(F|\phi^1,\dots,\phi^n,\eta^1,\dots,\eta^n)$ makes the following diagram commutes
	\begin{equation}\label{maestro}
		\begin{split}
			\xymatrixcolsep{4pc} \xymatrix{
				R[[1|n]] \ar[d] \ar[r]^{\Phi} & R[[1|n]] \ar[d]\\
				R[[1|2n]] \ar[r]^{\widetilde{\Phi}}& R[[1|2n]]}
		\end{split}
	\end{equation}
and we obtain the inclusion of groups:
	\begin{equation}\label{inclusion}
		\Aut[[ 1|n]]\stackrel{j}{\hookrightarrow}\Aut ^\omega[[ 1|2n]].
	\end{equation}
Also, we obtain an inclusion of Lie algebras given by
	\begin{equation*}
		\Der_{R}(R[[ 1|n]])\stackrel{\widehat{j}}{\hookrightarrow} K(1|2n),
	\end{equation*}
in such way that for $X\in \Der_{R,+}(R[[ 1|n]])$ we obtain $j(\exp(X))=\exp(\widehat{j}(X))$.
	
\begin{proposition}\label{coordenadas n a Susy 2n}
	Any $1|n$-super curve has an $SUSY_{2n}$-super curve associated.
\end{proposition}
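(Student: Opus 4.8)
The plan is to build the associated $SUSY_{2n}$-super curve $\widetilde{C}$ locally from coordinate patches and then glue, using the machinery developed just before the statement. Given a $1|n$-super curve $C$ with an atlas $\{(U_i,\Phi_i)\}$ and change of coordinates $\Phi_{ij}=\Phi_i\circ\Phi_j^{-1}\in\Aut[[1|n]]$, I would first attach to each patch the model super disc $R[[1|2n]]=R[[1|n]][\rho^1,\dots,\rho^n]$ with the odd form $\widetilde\omega=dz+\rho^1d\eta^1+\cdots+\rho^nd\eta^n$ of \eqref{otra estructura N=2n}. Via Proposition \ref{descomposicion 0} it suffices to treat the unipotent part, so each $\Phi_{ij}$ can be written (up to the affine factor) as $\exp(X_{ij})$ with $X_{ij}\in\Der_{R,+}(R[[1|n]])$; the affine part is handled directly. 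The key input is the map \eqref{inclusion}, $j:\Aut[[1|n]]\hookrightarrow\Aut^\omega[[1|2n]]$, together with its compatibility with exponentials, $j(\exp(X))=\exp(\widehat j(X))$, and with composition, which follows because $j$ is a group homomorphism.

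Next I would set $\widetilde\Phi_{ij}:=j(\Phi_{ij})$ and verify the cocycle condition on triple overlaps. Since $j$ is a homomorphism of groups, $\widetilde\Phi_{ij}\widetilde\Phi_{jk}=j(\Phi_{ij}\Phi_{jk})=j(\Phi_{ik})=\widetilde\Phi_{ik}$ automatically, so the $\{\widetilde\Phi_{ij}\}$ glue the charts $\{U_i\times \spec R[[\rho^1,\dots,\rho^n]]\}$ into a $1|2n$-super curve $\widetilde C$. Because each transition lies in $\Aut^\omega[[1|2n]]$, the form $\widetilde\omega$ is preserved up to multiplication by a function on overlaps, hence $\widetilde C$ carries a well-defined $SUSY_{2n}$-structure in the sense of the definition in the $SUSY$-super curves subsection. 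One should also record the commuting square \eqref{maestro}, which exhibits the closed embedding $C\hookrightarrow\widetilde C$ and shows the construction is functorial (independent of the chosen atlas up to canonical isomorphism): a refinement of the atlas, or a different choice of $X_{ij}$ with the same $\Phi_{ij}$, produces the same glued curve because $\widetilde\Phi_{ij}$ depends only on $\Phi_{ij}$ through the fixed map $j$.

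The one genuine subtlety — the part I expect to be the main obstacle — is that the definition of $\widetilde C$ a priori depends on the \emph{global} choice of the splitting $R[[1|2n]]=R[[1|n]][\rho^1,\dots,\rho^n]$, i.e.\ on how the extra odd coordinates $\rho^j$ transform. The construction of \eqref{coordenadas susy 2k} pins down the $\eta^j$ in terms of $F$ and the $\phi^j$, which is exactly what makes $\widetilde\Phi$ an element of $\Aut^\omega[[1|2n]]$; I would check that this assignment is consistent under composition so that the $\rho^j$-direction glues coherently, which again reduces to $j$ being a homomorphism. A secondary point is that one must confirm $\widetilde C$ is genuinely a super curve, i.e.\ that $\cT_{\widetilde C}$ is locally free of rank $1|2n$ — but this is automatic since each chart is a standard super disc and the transitions are automorphisms. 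I would close by remarking (as the excerpt already does) that the converse fails: not every $SUSY_{2n}$-curve arises this way, since the image of $j$ is a proper subgroup of $\Aut^\omega[[1|2n]]$.
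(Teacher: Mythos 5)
Your construction is essentially the paper's proof: the paper packages the same idea as the associated principal bundle $\Aut^\omega[[1|2n]]\times_{\Aut[[1|n]]}\Aut_C$ built from the group embedding \eqref{inclusion}, while you unwind it at the level of charts and transition cocycles $\widetilde\Phi_{ij}=j(\Phi_{ij})$ — a description the paper itself records immediately after the proposition. Both arguments rest on the same key fact, namely that \eqref{coordenadas susy 2k} makes $j$ a well-defined group homomorphism into $\Aut^\omega[[1|2n]]$, so the proposal is correct and not a genuinely different route.
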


\begin{proof} 
	Given a super curve $C$, consider the $\Aut[[1|n]]$-principal bundle $\Aut_C$, then we can construct an $\Aut^\omega[[1|2n]]$-principal bundle given by
		\begin{equation*}
			\Aut^\omega[[1|2n]]\times_{\Aut[[1|n]]}\Aut_C
		\end{equation*}
	for the inclusion $\Aut[[1|n]]\subset\Aut^\omega[[1|2n]]$ given in \eqref{inclusion}.
	
	The structure of $1|n$-super curve give us  family of local sections of the bundle $\Aut_C\to C$, $\{\phi_i\}_{i\in I}$; then the family of local sections $\{\Phi_i=(1,\phi_i)\}_{i\in I}$ gives us a family of $K(1|2n)$-super curves.
\end{proof}

The curve obtained in Proposition \ref{coordenadas n a Susy 2n} is going to be denoted by $\widetilde C$. If we have an atlas $\{\phi_i=(z|\theta^1,\theta^2)\}_{i\in I}$ over $C$ with cocycles $\phi_{ij}=\Phi_i\circ\Phi^{-1}_j$, we construct the atlas over $\widetilde C$ given by $\{\widetilde\phi_i:=(z|\theta^1,\theta^2,\rho^1,\rho^2) \}$, with cocycles $\widetilde\phi_i\widetilde\phi^{-1}_j=j(\phi_{ij})$, with $j$ given in \eqref{inclusion}.
	
\begin{observation}In the previous construction, we get a projection locally given by
	\begin{equation*}
		(z|\theta^1,\dots,\theta^n,\rho^1,\dots,\rho^n) \mapsto (z|\theta^1,\dots,\theta^n).
	\end{equation*}

By construction, the projection $\widetilde{C}\stackrel{\pi}{\longrightarrow} C$ is well defined. For any point $p\in C$ the fiber has dimension $0|n$.
\end{observation}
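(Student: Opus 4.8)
The plan is to exhibit $\pi$ as the gluing of the local coordinate projections $(z|\theta^1,\dots,\theta^n,\rho^1,\dots,\rho^n)\mapsto(z|\theta^1,\dots,\theta^n)$ and then to read off the relative dimension from this local model; the entire content lies in checking that these local projections are compatible with the transition functions, which is precisely what diagram \eqref{maestro} records.

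First I would fix the two atlases furnished by the construction: an atlas $\{(U_i,\phi_i)\}$ for $C$ with transition functions $\phi_{ij}\in\Aut[[1|n]]$, and the atlas $\{(\widetilde U_i,\widetilde\phi_i)\}$ for $\widetilde C$ with $\widetilde\phi_{ij}=j(\phi_{ij})\in\Aut^\omega[[1|2n]]$, where $j$ is the inclusion \eqref{inclusion}. On each chart the algebra inclusion $\iota\colon R[[1|n]]\hookrightarrow R[[1|2n]]$ appearing in \eqref{maestro} is dual to the forget-$\rho$ projection $p$. Applying the commutativity of \eqref{maestro} to $\Phi=\phi_{ij}$ and $\widetilde\Phi=\widetilde\phi_{ij}$ gives $\iota\circ\phi_{ij}=\widetilde\phi_{ij}\circ\iota$, whose geometric meaning is $p\circ\widetilde\phi_{ij}=\phi_{ij}\circ p$ on overlaps. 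Thus the local projections agree on overlaps and glue to a single morphism $\pi\colon\widetilde C\to C$, independent of the chosen atlas; this establishes the first two assertions. The point worth stressing is that although the extra odd coordinates $\eta^j$ produced by \eqref{coordenadas susy 2k} genuinely depend on the $\rho$'s, the even component $F$ and the first $n$ odd components $\phi^j$ of $\widetilde\phi_{ij}$ involve only $(z|\theta^1,\dots,\theta^n)$, which is exactly what makes the projection descend.

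For the fibers I would use the local model directly: on each chart $\iota$ realizes $\mathcal O_{\widetilde C}$ as $\mathcal O_C[\rho^1,\dots,\rho^n]$, so $\pi$ restricts to $\spec R[[1|2n]]\to\spec R[[1|n]]$ and the fiber over a point is the purely odd affine superspace $\spec\kappa[\rho^1,\dots,\rho^n]$ of dimension $0|n$; equivalently $\cT_{\widetilde C/C}$ is locally free of rank $0|n$ on the generators $\partial_{\rho^1},\dots,\partial_{\rho^n}$. Since the compatibility established above shows this description is preserved by the transition functions, every fiber $\pi^{-1}(p)$ has dimension $0|n$. The main (and really the only) obstacle is the gluing compatibility; once \eqref{maestro} is invoked to supply it, the dimension count is immediate.
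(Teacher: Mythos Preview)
Your argument is correct and is exactly the reasoning the paper leaves implicit: the Observation is stated without proof in the paper, with the phrase ``by construction'' pointing back to diagram \eqref{maestro} and the description of the atlas $\{\widetilde\phi_i\}$ with cocycles $j(\phi_{ij})$. Your unpacking of that diagram into the compatibility $p\circ\widetilde\phi_{ij}=\phi_{ij}\circ p$ and the ensuing gluing of local projections is the intended justification, and the fiber dimension count from the local model $R[[1|2n]]=R[[1|n]][\rho^1,\dots,\rho^n]$ is immediate.
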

	
\begin{observation}
	There exists a geometric description of this fact given by \cite{dolgikh1990}. The space $\widetilde{C}$ is described by the space of $0|n$-subspaces of $T_pC$ for any $p\in C$. The $SUSY$-super structure for a point $(p,E)$ is given by the distribution $\widetilde{E}\subset\cT_{(p,E)}\widetilde{C}$ defined by the local form $dz+\rho^1d\theta^1+\dots+\rho^nd\theta^n$ for $p=(z|\theta^1,\dots,\theta^n)$. Locally, $\widetilde{E}$ is generated by $\{\partial_{\rho^1},\dots,\partial_{\rho^n},\rho^1\partial_z+\partial_{\theta^1},\dots,\rho^n\partial_z+\partial_{\theta^n}\}$.
	
	In this context, the projection $\widetilde{C}\stackrel{\pi}{\longrightarrow} C$ is given by $(p,E)\mapsto p$ and the distribution $\widetilde{E}$ is given by $d\pi^{-1}(E)$, for $T_{(p,E)}\widetilde{C}\stackrel{d\pi}{\longrightarrow}T_pC$.
\end{observation}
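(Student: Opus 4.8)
The plan is to realize $\widetilde C$ intrinsically as the relative Grassmannian $\mathrm{Gr}_{0|n}(\cT_{C})$ of rank $0|n$ locally free subsheaves of the tangent sheaf and to check that its tautological structure agrees with the $SUSY_{2n}$-cocycle built in Proposition~\ref{coordenadas n a Susy 2n}, following \cite{dolgikh1990}. First I would describe the big cell. Over a coordinate patch $(z|\theta^1,\dots,\theta^n)$ of $C$ the tangent sheaf is framed by $\{\partial_z|\partial_{\theta^1},\dots,\partial_{\theta^n}\}$, and a rank $0|n$ subbundle $E$ projecting isomorphically onto the odd summand $\langle\partial_{\theta^1},\dots,\partial_{\theta^n}\rangle$ has a unique frame $v_j=\partial_{\theta^j}+\rho^j\partial_z$ with $\rho^j$ odd. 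These $n$ odd functions $\rho^j$ are fibre coordinates, and together with the pullbacks of $(z|\theta^1,\dots,\theta^n)$ they give local coordinates $(z|\theta^1,\dots,\theta^n,\rho^1,\dots,\rho^n)$ exhibiting $\mathrm{Gr}_{0|n}(\cT_C)$ as a $1|2n$-super manifold with projection $\pi\colon(p,E)\mapsto p$; the fibrewise tangent space is $\underline{\Hom}(E,\cT_C/E)$, where $\cT_C/E$ has rank $1|0$, so the relative dimension is $0|n$ as required.

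The heart of the argument is the transition law of the fibre coordinate. Given a change of coordinates $\Phi=(F|\phi^1,\dots,\phi^n)$ on $C$, writing $w=F$ and $\theta'^k$ for the new odd coordinate whose transition is $\phi^k$, the chain rule gives
\begin{equation*}
v_j=(D_jF)\,\partial_w+\sum_k (D_j\phi^k)\,\partial_{\theta'^k},\qquad D_j=\rho^j\partial_z+\partial_{\theta^j},
\end{equation*}
since the $\partial_w$- and $\partial_{\theta'^k}$-components of the tangent vector $v_j$ are $v_j(F)=D_jF$ and $v_j(\phi^k)=D_j\phi^k$, with $D_j$ the operators introduced before \eqref{coordenadas n a 2n}. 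The even matrix $M=(D_j\phi^k)_{jk}$ is invertible, so normalizing the frame $\{v_j\}$ by $M^{-1}$ yields $\{\partial_{\theta'^k}+\rho'^k\partial_w\}$ with
\begin{equation*}
\begin{bmatrix}\rho'^1\\\vdots\\\rho'^n\end{bmatrix}=\begin{bmatrix}D_1\phi^1&\cdots&D_1\phi^n\\\vdots& &\vdots\\D_n\phi^1&\cdots&D_n\phi^n\end{bmatrix}^{-1}\begin{bmatrix}D_1F\\\vdots\\D_nF\end{bmatrix},
\end{equation*}
which is exactly \eqref{coordenadas susy 2k} (so $\rho'^k=\eta^k$). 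Hence the Grassmannian transition cocycle equals $\{j(\phi_{ij})\}$ of \eqref{inclusion}, and $\mathrm{Gr}_{0|n}(\cT_C)\simeq\widetilde C$ as $1|2n$-super curves over $C$, compatibly with the projections.

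It remains to identify the distributions. In the cell coordinates $\pi$ is $(z|\theta,\rho)\mapsto(z|\theta)$, so $d\pi$ annihilates $\partial_{\rho^1},\dots,\partial_{\rho^n}$ and fixes $\partial_z,\partial_{\theta^j}$; thus $\ker d\pi=\langle\partial_{\rho^1},\dots,\partial_{\rho^n}\rangle$ is the relative tangent bundle. The tautological subspace at $(p,E)$ is $E=\langle\partial_{\theta^j}+\rho^j\partial_z\rangle$, whose canonical lifts to $\cT_{\widetilde C}$ are $\rho^j\partial_z+\partial_{\theta^j}$, so
\begin{equation*}
d\pi^{-1}(E)=\langle\partial_{\rho^1},\dots,\partial_{\rho^n},\ \rho^1\partial_z+\partial_{\theta^1},\dots,\rho^n\partial_z+\partial_{\theta^n}\rangle=\widetilde E,
\end{equation*}
the stated generators. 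Finally I would verify on these generators that $\widetilde E=\ker\omega$ for $\omega=dz+\sum_j\rho^jd\theta^j$: contracting with $\partial_{\rho^j}$ gives $0$, and with $\rho^j\partial_z+\partial_{\theta^j}$ gives $\rho^j-\rho^j=0$ once the odd-contraction sign is applied, so $\widetilde E$ is precisely the $SUSY$ distribution transported from Proposition~\ref{coordenadas n a Susy 2n}. The main obstacle is the middle step: setting up the relative super-Grassmannian and its functor of points rigorously, and carrying out the frame normalization with every parity sign correct---the oddness of $\rho^j$, the ordering in $\rho^j\partial_z$, and the inverse of the (even) odd-Jacobian $M$---so that the output matches \eqref{coordenadas susy 2k} on the nose rather than up to sign.
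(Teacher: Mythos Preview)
Your proposal is correct and carefully executed, but there is nothing in the paper to compare it against: this statement is an \emph{Observation}, not a proposition with a proof; the paper simply records the geometric interpretation from \cite{dolgikh1990} without justification. What you have written is exactly the verification one would perform to confirm that the Grassmannian of $0|n$-subspaces of $\cT_C$ reproduces the cocycle $\{j(\phi_{ij})\}$ of \eqref{inclusion} and that the tautological preimage $d\pi^{-1}(E)$ is the kernel of $\widetilde\omega=dz+\sum_j\rho^jd\theta^j$. Your chain-rule computation $v_j(F)=D_jF$, $v_j(\phi^k)=D_j\phi^k$ followed by the $M^{-1}$-normalization is precisely the mechanism by which \eqref{coordenadas susy 2k} arises, and your contraction check $\rho^j-\rho^j=0$ with the odd sign is the right way to see $\widetilde E=\ker\widetilde\omega$. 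In short, you have supplied the proof the paper omits, and your argument is fully compatible with the surrounding constructions (Proposition~\ref{coordenadas n a Susy 2n} and diagram \eqref{maestro}).
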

	
For  $SUSY_n$-super curve $C\to S$ the operators $D^i=\theta^i\partial_z+\partial_i$ define a $0|n$-distribution over $T_{C/S}$ with $	D_\alpha^i = (D_\alpha^i \phi_{\alpha,\beta}^1) D_\beta^i+\cdots+(D_\alpha^i \phi_{\alpha,\beta}^n) D_\beta^i,\ i=1,\dots,n$. For the change of coordinates $(F|\phi^1,\dots,\phi^n)$ the operators $D^i$, for $i=1,\dots,n$, verify
	\begin{equation}\label{ecuacion de N=k SUSY curvas}
		D^i F = \phi^1D^i \phi^1+\cdots+\phi^nD^i \phi^n,\ i=1,\dots,n.
	\end{equation}
	
\begin{observation}	
	The $SUSY_{2n}$-super curve $\widetilde{C}$ associated to the $1|n$-super curve $C$ has a rank $0|n$ bundle locally generated by the local fields $D^i=\partial_{\rho^i}$, $i=1,\dots,n$. Reciprocally, suppose that $C$ is a $SUSY_{2n}$-super curve with the local coordinates $(w|\theta^1,\cdots,\theta^{2n})$ with a change of coordinates $(F|\phi^1,\cdots,\phi^n)$, introducing the new variables	
		\begin{equation*}
			\begin{split}
				w = &\ z+i\left(\theta^{1}\theta^{2}+\cdots+\theta^{2n-1}\theta^{2n}\right)\\
				\zeta^j  = &-i(-\theta^{2j-1}+i\theta^{2j}),\ j=1,\dots,n.\\
				\rho^j  = &  -i(\theta^{2j-1}+i\theta^{2j}),\ j=1,\dots,n.
			\end{split}
		\end{equation*}
	The change of coordinates 
		\begin{equation*}
			\begin{split}
				G=&F +i\left(\phi^{1}\phi^{2}+\cdots+\phi^{2n-1}\phi^{2n}\right)\\
				\psi^j=&-i(-\phi^{2j-1}+i\phi^{2j}),\ j=1,\dots,n,\\
				\eta^j=&-i(\phi^{2j-1}+i\phi^{2j}),\ j=1,\dots,n,
			\end{split}
		\end{equation*}
	and considering the operators $D^j_\pm=\frac{1}{2}\left\{D^{2j}\pm iD^{2j-1} \right\}$, the equation \eqref{ecuacion de N=k SUSY curvas} reads:
		\begin{equation}\label{ecuacion de otras coordenadas de N=k SUSY curvas}
			D^j_\pm G  = \eta^1D^j_\pm \psi^1+\cdots+\eta^nD^j_\pm \psi^n.
		\end{equation}
	This induces the rank $0|2n$-distribution
		\begin{equation}\label{fibrado y subfibrado}
			D^j_{\alpha,\pm}  = \sum_{k=1}^n \left((D^j_{\alpha,\pm} \psi^k )D^j_{\beta,-} +(D^j_{\alpha,\pm} \eta^k) D^j_{\beta,+} \right)
		\end{equation}
	and if the following equations hold
		\begin{equation}\label{equacion de orientacion}
			D^j_+\psi^k = 0,\qquad j,k=1,\dots,n;
		\end{equation}
	then we can define the $1|n$-super curve by the coordinates $(w|\zeta^1,\dots,\zeta^n)$. The conditions described in equation \eqref{equacion de orientacion} are equivalent to having the $0|n$-distribution locally defined by $D^j_+$, $j=1,\dots,n$.
	
	Let $C$ be a $SUSY_4$-super curve. Taking the $\Aut^\omega[[1|4]]$-principal bundle $\Aut_C\to C$, we obtain that it comes from a $1|2$-super curve if and only if the bundle $\Aut[[1|2]]\backslash\Aut_C$ has a global section.
\end{observation}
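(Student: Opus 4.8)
The plan is to recognize this as a reduction-of-structure-group statement for the closed embedding $j\colon\Aut[[1|2]]\hookrightarrow\Aut^\omega[[1|4]]$ of \eqref{inclusion}, and to carry it out at the level of gluing cocycles over the common reduced curve $C_0$ rather than over $C$ and a putative $C'$ separately. First I would set up the dictionary: a $1|2$-super curve $C'$ over $S$ with reduced curve $C_0$ is the same as an open cover $\{U_i\}$ of $C_0$ together with a cocycle $\{\phi_{ij}\}$, $\phi_{ij}\in\Aut[[1|2]](U_{ij})$ — indeed any such cocycle glues copies of $\spec R[[1|2]]$ into a $1|2$-super manifold, which is connected because $C_0$ is — and by the construction following Proposition~\ref{coordenadas n a Susy 2n} the associated $SUSY_4$-curve $\widetilde{C'}$ is glued by the cocycle $\{j(\phi_{ij})\}$. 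I would also record that $\Aut_C\to C$ carries, over each $SUSY_4$-coordinate patch $U_i$ of $C$, the tautological local section $\sigma_i$ given by the coordinate system $\widetilde\phi_i$ itself, and that $\sigma_i$ and $\sigma_j$ differ on $U_{ij}$ by the transition function $\widetilde\phi_{ij}=\widetilde\phi_i\circ\widetilde\phi_j^{-1}\in\Aut^\omega[[1|4]](U_{ij})$; accordingly $\Aut[[1|2]]\backslash\Aut_C\to C$ is the associated fiber bundle with fiber the homogeneous space $\Aut[[1|2]]\backslash\Aut^\omega[[1|4]]$.

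For the forward implication I would argue: if $C\cong\widetilde{C'}$ for some $1|2$-super curve $C'$, then $C$ admits an atlas whose transition functions $\widetilde\phi_{ij}=j(\phi_{ij})$ lie in $j(\Aut[[1|2]])$, so the tautological sections $\sigma_i$ of $\Aut_C$ differ on overlaps by elements of $\Aut[[1|2]]$; hence their images in $\Aut[[1|2]]\backslash\Aut_C$ agree on $U_{ij}$ and glue to a global section.

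For the converse, given a global section $s\colon C\to\Aut[[1|2]]\backslash\Aut_C$, I would refine the cover so that the quotient map $\Aut_C\to\Aut[[1|2]]\backslash\Aut_C$ admits local sections (this follows from the analogous semidirect-product decomposition of $\Aut^\omega[[1|4]]$, cf.\ Propositions~\ref{descomposicion 0} and \ref{descomposicion}, under which $j(\Aut[[1|2]])$ respects the decomposition and the prounipotent part of the quotient is an affine space through $\exp$), lift $s$ to local sections $\sigma_i\colon U_i\to\Aut_C$, i.e.\ to $SUSY_4$-coordinate systems $\widetilde\phi_i$ on $C$, and note that since $\sigma_i$ and $\sigma_j$ have the same image $s|_{U_{ij}}$ they differ by a section of $\Aut[[1|2]]$, so $\widetilde\phi_{ij}\in j(\Aut[[1|2]])(U_{ij})$. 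Injectivity of $j$, together with the fact that it is a group homomorphism, then makes $\phi_{ij}:=j^{-1}(\widetilde\phi_{ij})$ a well-defined $\Aut[[1|2]]$-valued $1$-cocycle (the cocycle identity descends), which glues to a $1|2$-super curve $C'$ with reduced curve $C_0$; by the dictionary $\widetilde{C'}$ is glued by $\{j(\phi_{ij})\}=\{\widetilde\phi_{ij}\}$, the gluing cocycle of $C$, so $\widetilde{C'}\cong C$ and $C$ comes from a $1|2$-super curve.

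The step I expect to be the main obstacle is the bookkeeping around the two embeddings, not any hard estimate. One must keep straight that $C$ is $1|4$-dimensional whereas $C'$ is $1|2$-dimensional, so that $\Aut_C$ and $\Aut_{C'}$ naturally live over different super schemes related only by the $0|2$-fibered projection $\widetilde{C'}\to C'$ — this is why I phrase everything through cocycles over $C_0$ — and one needs that $j$ is a closed embedding of pro-algebraic groups so that $\Aut[[1|2]]\backslash\Aut_C$ is a reasonable fiber bundle (its fiber $\Aut[[1|2]]\backslash\Aut^\omega[[1|4]]$ parametrizes exactly the $SUSY_4$ data not coming from a $1|2$-structure) and the quotient map admits local sections. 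The injectivity of $j$ in \eqref{inclusion}, invoked to descend the cocycle condition, is the genuinely indispensable input.
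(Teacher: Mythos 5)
Your cocycle argument for the final biconditional is correct, and it is in substance the same reduction-of-structure-group reasoning the paper uses: the last sentence of the observation is the exact analogue of Proposition~\ref{fibrado trivial es S2}, whose proof is precisely ``the quotient bundle has a section if and only if there is an atlas whose transition functions lie in the subgroup,'' and your forward and converse directions reproduce that, with the extra (correct) input that the construction of Proposition~\ref{coordenadas n a Susy 2n} sends a cocycle $\{\phi_{ij}\}$ for $C'$ to the cocycle $\{j(\phi_{ij})\}$ for $\widetilde{C'}$, so that recovering a $j(\Aut[[1|2]])$-valued atlas for $C$ really does reconstruct a $1|2$-curve $C'$ with $\widetilde{C'}\cong C$. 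Your attention to the liftability of the global section (via the decomposition of Proposition~\ref{descomposicion 0} and the prounipotent part) is no worse than what the paper itself supplies.

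The gap is one of coverage: the statement you were asked to prove is the whole observation, and its bulk consists of concrete computational claims that your torsor argument does not subsume and that you never verify. Specifically: (i) that under the displayed change of variables, equation \eqref{ecuacion de N=k SUSY curvas} becomes \eqref{ecuacion de otras coordenadas de N=k SUSY curvas} for the operators $D^j_\pm=\tfrac12\{D^{2j}\pm iD^{2j-1}\}$; (ii) that this induces the $0|2n$-distribution with transition rule \eqref{fibrado y subfibrado}; and (iii) that the orientation equations \eqref{equacion de orientacion} are equivalent to the sub-distribution spanned by the $D^j_+$ being well defined, which is what lets one pass from the distribution picture to the coordinates $(w|\zeta^1,\dots,\zeta^n)$ and hence to the group-theoretic reformulation. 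This is where the paper does its actual work (it is the route by which the paper identifies $j(\Aut[[1|2]])\subset\Aut^\omega[[1|4]]$ as the stabilizer of the relevant structure), and a complete proof of the observation needs these computations, or at least the $n=2$ case of them, carried out; your proposal replaces them with the phrase ``by the dictionary,'' which assumes rather than proves that the subgroup $j(\Aut[[1|2]])$ of \eqref{inclusion} is exactly the set of $SUSY_{2n}$-changes of coordinates preserving the extra data.
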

	
\begin{example}
	The $1|1$-super curves induce what are called oriented $SUSY_2$-super curves. For a general $SUSY_2$-super curve and change of coordinates $(F|\psi,\eta)$ equations \eqref{fibrado y subfibrado} are:
		\begin{equation*}
			\begin{split}
				D_{\alpha,+}= & D_{\alpha,+} \psi D_{\beta,-} +D_{\alpha,+} \eta D_{\beta,+}, \\
				D_{\alpha,-} = & D_{\alpha,-} \psi D_{\beta,-} +D_{\alpha,-} \eta D_{\beta,+},
			\end{split}
		\end{equation*}
	and	since the matrix
		\begin{equation*}
			\left[\begin{matrix}D_{\alpha,+} \psi & D_{\alpha,+} \eta \\
				D_{\alpha,-} \psi & D_{\alpha,-} \eta \end{matrix}\right]
		\end{equation*}
	is invertible, then $D_{\alpha,-} \psi D_{\alpha,+} \eta\neq 0 $. If \eqref{equacion de orientacion} holds, differentiating equation \eqref{ecuacion de otras coordenadas de N=k SUSY curvas}, we get 
		\begin{equation*}
			\begin{split}
				D_{\alpha,-}D_{\alpha,-}F =&\ D_{\alpha,-}\left(\eta  D_{\alpha,-} \psi\right),\\
				=&\ D_{\alpha,-}\eta  D_{\alpha,-} \psi-\eta D_{\alpha,-} D_{\alpha,-} \psi;
			\end{split}
		\end{equation*}
	since $D_{\alpha,-} D_{\alpha,-} =0$, we obtain	that $ D_{\alpha,-} \eta D_{\alpha,-} \psi=0$, then $D_{\alpha,-} \eta$, so there exists another bundle defined by $D_-$ that for a change of coordinates $(F|\psi,\eta)$ we get
		\begin{equation*}
			D_{\alpha,-} = D_{\alpha,-} \psi D_{\beta,-}.
		\end{equation*}
	This line bundle induces another curve $\widehat{C}$ defined by the coordinates:
		\begin{equation*}
			\begin{split}
				\widehat z = & z-\theta \rho,\\
				\widehat \rho  = & \rho,
			\end{split}
		\end{equation*}
	this curve is called the ``dual" curve associated to $C$. The situation is described in \cite{manin2014topics}.
\end{example}
	
\begin{example}
	Over the affine plane $\mathbb C^{1|4}$ consider the following relations
		\begin{enumerate}
			\item $A(z|\theta^1,\theta^2,\theta^3,\theta^4)=(z+1|\theta^1,\theta^2,\theta^3,\theta^4)$,
			\item $B(z|\theta^1,\theta^2,\theta^3,\theta^4)=(z+\tau-2\theta^1\theta^2\theta^3\theta^4|\theta^1+\theta^2\theta^3\theta^4,\theta^2-\theta^1\theta^3\theta^4,\theta^3+\theta^1\theta^2\theta^4,\theta^4-\theta^1\theta^2\theta^3)$.
		\end{enumerate}
	Where $\tau$ is a non null even parameter. The quotient $\mathbb C^{1|4}/\langle A,B\rangle$ is an elliptic curve $\widetilde{\mathbb T}_{\tau}$ with a $SUSY_4$-super structure. Its tangent bundle $T_{\widetilde{\mathbb T}_\tau}$ does not have a subbundle of dimension $0|2$ then this curve does not come from a $1|2$-super curve.
\end{example}
	
\begin{observation}\label{dualidad}
	For $n=2$ the change of coordinates for our new variables $(\rho^1,\rho^2)$ are given by the equation:
		\begin{equation}\label{coordenadas susy 4}
				\left[ \begin{matrix}\eta^1 \\ \eta^2 \end{matrix} \right] = A^{-1} \left[ \begin{matrix}\partial_{1}F \\ \partial_{2}F\end{matrix} \right] + \Ber (\Phi) \left ( \left[ \begin{matrix}\partial_{2}\phi^2 & -\partial_{1}\phi^2 \\ -\partial_{2}\phi^1 & \partial_{1}\phi^1\end{matrix} \right] \left[ \begin{matrix}\rho^1 \\ \rho^2 \end{matrix}\right] + \left[ \begin{matrix}\partial_z \phi^2 \\ -\partial_z \phi^1\end{matrix} \right] \rho^1 \rho^2 \right ),
		\end{equation}	
	where $A=(\partial_{i}\phi^j )_{i,j}$ and this gives a structure of $SUSY_4$-super curve.
	
	More specifically, observe that the local coordinates $\rho^1\rho^2,\rho^1,\rho^2$ define a $1|2$ bundle over $C$ and since 
		\begin{equation*}
				\left[ \begin{matrix}\eta^1\eta^2 \\ \eta^2 \\-\eta^1 \end{matrix} \right] = \det A^{-1}\left[\begin{matrix} \partial_{1}F\partial_{2}F\\ \left[\begin{matrix}\partial_{2}\phi^1&\partial_{1}\phi^1 \\\partial_{2}\phi^2&\partial_{1}\phi^2 \end{matrix} \right] \left[ \begin{matrix}-\partial_{1}F \\ \partial_{2}F \end{matrix} \right] \end{matrix} \right] + \Ber (\Phi) J\Phi \left[ \begin{matrix}\rho^1\rho^2 \\ \rho^2\\-\rho^1 \end{matrix}\right].
		\end{equation*} 
	Then the sheaf $\mathcal{A}_{\widetilde{C}}$ that defines $\widetilde{C}$ is given by the extension of $\mathcal{O}_C$ by $\Ber_C\otimes \Omega_C$:
		\begin{equation*}
			0\to \mathcal{O}_C \to \mathcal{A}_{\widetilde{C}}\to \Ber_C\otimes \Omega_C\to 0.
		\end{equation*}
	When our curve is split, and associated to the bundle $E$, then $\widetilde{C}$ is a split curve associated the reduced curve $C_0$ and 
		\begin{equation*}
			W= E\oplus (E^*\otimes \Omega_{C_0}),
		\end{equation*}
	where $\Omega_{C_0}$ is the canonical bundle over $C_0$. In this special case, we can consider the \emph{dual curve} as the split curve associated to $E^*\otimes \Omega_{C_0}$.
\end{observation}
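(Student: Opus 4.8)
The statement rolls together three claims: the explicit transition rule \eqref{coordenadas susy 4} for the new fibre coordinates $(\rho^1,\rho^2)$ of $\widetilde C$ when $n=2$ (together with the assertion that this is a $SUSY_4$-structure); the short exact sequence $0\to\mathcal{O}_C\to\mathcal{A}_{\widetilde{C}}\to\Ber_C\otimes\Omega_C\to 0$ of $\mathcal{O}_C$-modules; and, in the split case, the identification $W=E\oplus(E^*\otimes\Omega_{C_0})$, the \emph{dual curve} being the split $1|2$-curve attached to $E^*\otimes\Omega_{C_0}$. My plan is to obtain \eqref{coordenadas susy 4} by specialising Proposition~\ref{coordenadas n a Susy 2n} to $n=2$, then to read the cocycle of $\mathcal{A}_{\widetilde{C}}$ off \eqref{coordenadas susy 4}, and finally to pull the resulting sequence back to the reduced curve and simplify with rank-two bundle identities.

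\emph{The transition rule.} For $n=2$, formula \eqref{coordenadas susy 2k} presents the new odd coordinates of $\widetilde C$ as $\eta=(D_i\phi^j)^{-1}(D_iF)$, where $\Phi=(F\,|\,\phi^1,\phi^2)$ is a change of coordinates of $C$ and $D_i=\rho^i\partial_z+\partial_{\theta^i}$. Since $F,\phi^1,\phi^2$ do not involve $\rho^1,\rho^2$, I would write $D_i\phi^j=\partial_{\theta^i}\phi^j+\rho^i\partial_z\phi^j$ and $D_iF=\partial_{\theta^i}F+\rho^i\partial_zF$, set $A=(\partial_{\theta^i}\phi^j)_{ij}$, and expand the $2\times2$ inverse to second order in $\rho^1,\rho^2$ using $(\rho^i)^2=0$. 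Collecting the $\rho$-free part, the part linear in $\rho^1,\rho^2$, and the $\rho^1\rho^2$-part, and noting that the scalar $\Ber(J\Phi)=\det\!\big(\partial_zF-(\partial_{\theta^i}F)(A^{-1})_{ij}(\partial_z\phi^j)\big)\det(A)^{-1}$ is exactly what multiplies the last two groups of terms, one recovers \eqref{coordenadas susy 4}; the companion display for $(\eta^1\eta^2,\eta^2,-\eta^1)$ then follows by multiplying out, its $\rho^1\rho^2$-homogeneous part being governed by $\Ber(J\Phi)\,J\Phi$. That this is a $SUSY_4$-structure is Proposition~\ref{coordenadas n a Susy 2n} for $n=2$; \eqref{coordenadas susy 4} merely records the cocycle.

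\emph{The extension.} As an $\mathcal{O}_C$-module, $\mathcal{A}_{\widetilde{C}}$ is, on a chart, freely generated by $1,\rho^1,\rho^2,\rho^1\rho^2$, hence of rank $2|2$; the structure morphism $\mathcal{O}_C\hookrightarrow\mathcal{O}_{\widetilde C}$ embeds $\mathcal{O}_C\cdot 1$ as a subsheaf which is stable under the transition functions of $\widetilde C$ because it is the pull-back of functions along $\widetilde C\to C$, giving the sub-object $\mathcal{O}_C$. The quotient is locally generated by $\rho^1,\rho^2,\rho^1\rho^2$, so locally free of rank $1|2$, and its cocycle is obtained from the $(\eta^1\eta^2,\eta^2,-\eta^1)$-formula by discarding the $\rho$-free terms — legitimate, since these are multiples of $1\in\mathcal{O}_C$ — leaving precisely $\Ber(J\Phi)\,J\Phi$. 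Comparing with the cocycle $\Ber(J\Phi)$ of $\Ber_C$ (see \eqref{ber}) and the cocycle $J\Phi$ of $\Omega_C$ in the frame of differentials, one identifies the quotient with $\Ber_C\otimes\Omega_C$ under $\rho^1\rho^2\leftrightarrow[dz|d\theta^1 d\theta^2]\otimes dz$, $\rho^2\leftrightarrow[dz|d\theta^1 d\theta^2]\otimes d\theta^1$, $-\rho^1\leftrightarrow[dz|d\theta^1 d\theta^2]\otimes d\theta^2$, which is the asserted sequence.

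\emph{The split case.} For split $C=\spec\Sym_{\mathcal{O}_{C_0}}E[-1]$ I would pull the sequence back along $j\colon C_0\hookrightarrow C$. There $j^*\Omega_C=\Omega_{C_0}\oplus\Pi E$, while $j^*\Ber_C=\Omega_{C_0}\otimes\det E^*$ (the computation recalled in the example following Observation~\ref{variedades split}), so the odd part of $j^*(\Ber_C\otimes\Omega_C)$ is $\Pi(\Omega_{C_0}\otimes\det E^*\otimes E)$, which for the rank-two bundle $E$ is canonically $\Pi(E^*\otimes\Omega_{C_0})$, because the perfect pairing $E\otimes E\to\det E$ gives $\det E^*\otimes E\cong E^*$. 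Specialising \eqref{coordenadas susy 4} to the gauge $F=\phi(z)$, $\phi^j=\sum_k\theta^k a_{kj}(z)$ of a split curve and restricting to $\theta=0$, the $\rho$-free and $\rho^1\rho^2$ terms drop out and the $\rho$-directions transform by $\phi'(z)\cdot A^{-1}$, i.e. by the cocycle of $E^*\otimes\Omega_{C_0}$; together with the $\theta$-directions pulled back from $C$, which form $E$, this yields $W=E\oplus(E^*\otimes\Omega_{C_0})$, and forgetting the $E$-summand gives the dual curve.

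The step I expect to cost the most care is the \textbf{second} one: keeping track of signs, of the ordering of the odd generators, and of the precise form of the cocycle of $\Omega_C$ versus that of $\cT_C$, so that $\Ber(J\Phi)\,J\Phi$ is matched with the cocycle of $\Ber_C\otimes\Omega_C$ on the nose rather than up to an unidentified automorphism; a secondary point in the third step is to check that the residual $\theta\,\rho^1\rho^2$-term in the transition of the $\rho$'s can be gauged away, so that $\widetilde C$ is genuinely split over $C_0$ (and not merely projected), the rank-two identities $\det E^*\otimes E\cong E^*$ and, when $\det E\cong\Omega_{C_0}$, $E^*\otimes\Omega_{C_0}\cong E$ being routine but needing to be applied consistently.
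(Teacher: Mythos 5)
Your proposal is correct and follows essentially the same route the paper intends: the paper states \eqref{coordenadas susy 4} as the $n=2$ specialisation of \eqref{coordenadas susy 2k} without writing out the derivation, and your expansion of $(D_i\phi^j)^{-1}$ in the nilpotents $\rho^1,\rho^2$, followed by reading the cocycle $\Ber(J\Phi)\,J\Phi$ off the $(\eta^1\eta^2,\eta^2,-\eta^1)$ display and restricting to the split gauge, is exactly the computation being summarised. The only caveat is bookkeeping (transposes, the sign on $-\eta^1$, and the identity $\det E^*\otimes E\cong E^*$ for rank two), which you correctly flag as the delicate part.
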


\section{$S(2)$-super curves and $SUSY_4$-super curves}
 \label{sec:duality}

In this section we consider the adjoint action $\Ad_\alpha:\Aut^\omega[[1|4]]\to\Aut^\omega[[1|4]]$ given by $\alpha\in\Aut^\omega[[1|4]]$:
	\begin{equation*}
		\alpha(z|\theta^1,\theta^2,\rho^1,\rho^2):=(z-\theta^1\rho^1-\theta^2\rho^2,\rho^1,\rho^2,\theta^1,\theta^2).
	\end{equation*}
Observe that $\alpha$ is an involution and $\Ad_\alpha$ fixes the subspace $\Aut^\omega[[1|2]]\subset\Aut ^\omega[[1|4]]$.

For the automorphism $\Ad_\alpha$ we have that $\Aut [[1|2]]\cap \Ad_{\alpha}\left(\Aut ^\delta[[1|2]]\right)=\Aut ^\Delta[[1|2]]$. Then we have the commutative diagram
	\begin{equation}\label{nomaestro}
		\begin{split}
			\xymatrixcolsep{4pc} \xymatrix{
				\Aut ^\Delta[[1|2]] \ar[d]^{j} \ar[r]^{\Ad_\alpha}& \Aut ^\Delta[[1|2]]\ar[d]^{j} \\
				\Aut ^\omega[[1|4]] \ar[r]^{\Ad_\alpha}& \Aut ^\omega[[1|4]]}
		\end{split}
	\end{equation}
Let $C$ be an $S(1|2)$-super curve, over the $\Aut^\delta[[1|2]]$-principal bundle, $\Aut^\delta_C$, construct the $\Aut^\omega[[1|4]]$-principal bundle, $\Aut^\omega[[1|4]]\times_{\Aut^\delta[[1|2]]}\Aut^\delta_C$, after this we obtain a morphism given by
	\begin{equation}\label{automorfismo}
		\begin{split}
			\mu:\Aut^\omega[[1|4]]\times_{\Aut ^\delta[[1|2]]}\Aut^\delta_C\to&\ \Aut^\omega[[1|4]]\times_{\Aut ^\delta[[1|2]]}\Aut^\delta_C\\
			(g,(x,\Phi))\mapsto&\ (\alpha\circ g,(x,\Phi)).
		\end{split}
	\end{equation}
	
If $C$ is an $S(2)$-super curve, then the bundle $K=\Aut^\delta_C/\Aut^\Delta\to C$ is trivial, we have a global section $s:C\to K$, defined by local sections $\{\phi_i\}_i$ of the bundle $\Aut^\delta_C\to C$ such that $\phi_i\circ\phi_j^{-1}\in\Aut^\Delta[[1|2]]$. Using \eqref{automorfismo}, we get local sections of $\Aut^\omega[[1|4]]\times_{\Aut ^\delta[[1|2]]}\Aut^\delta_C\to C$ by $\{\Ad_\alpha\left((1,\phi_i)\right)=(\alpha,\phi_i)\}_i$, since $\phi_i\circ\phi_j^{-1}\in\Aut^\Delta[[1|2]]$, then $\mu(\phi_i\circ\phi_j^{-1})\in\Aut^\Delta[[1|2]]$. In particular, such sections give us a global section of $\Aut^\Delta\backslash\Aut^\omega[[1|4]]\times_{\Aut ^\delta[[1|2]]}\Aut^\delta_C\to C$. Then, we obtain another family of $S(2)$-super curves $\widehat C\to S$. Such family is called dual family of curves, or simply dual curve.

The bundle $\Ad_\alpha(K)$ defines a family of $S(2)$-super curves if $C\to S$ is a family of $S(2)$-super curves.

\begin{observation}
	For a $\Phi\in\Aut ^\omega[[1|2]]$ we have $\mu(\Phi)=\Phi$, then for a $SUSY_2$-super curve in the diagram $\eqref{nomaestro}$ we have the dual curve is isomorphic to the original one.
\end{observation}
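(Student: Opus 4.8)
The plan is to first prove the cocycle-level identity $\mu(\Phi)=\Phi$ for $\Phi\in\Aut^\omega[[1|2]]$, and then to note that a $SUSY_2$-super curve and its dual are glued from literally the same transition data. Recall that $K(1|2)\subseteq S(2)$, so $\Aut^\omega[[1|2]]\subseteq\Aut^\Delta[[1|2]]$; in particular any orientable $SUSY_2$-super curve is already an $S(2)$-super curve, and so its dual curve $\widehat C$ (constructed around \eqref{automorfismo}) is defined, and it suffices to identify the transition cocycle of $\widehat C$.

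For the identity $\mu(\Phi)=\Phi$ I would unwind $\mu$ through the inclusion $j\colon\Aut^\Delta[[1|2]]\hookrightarrow\Aut^\omega[[1|4]]$. By the commutative square \eqref{nomaestro} the map $\mu$ on $\Aut^\Delta[[1|2]]$ is characterised by $j\circ\mu=\Ad_\alpha\circ j$, and since $j$ is injective it is enough to show that $\Ad_\alpha$ fixes $j(\Phi)$ for $\Phi\in\Aut^\omega[[1|2]]$. Now $j$ carries $\Aut^\omega[[1|2]]$ onto the distinguished copy $\Aut^\omega[[1|2]]\subset\Aut^\omega[[1|4]]$ (the $SUSY_4$-data canonically attached to a $1|2$-super curve equipped with an $\omega$-structure, as in Proposition \ref{coordenadas n a Susy 2n}), and $\Ad_\alpha$ fixes that subspace pointwise --- this is exactly the remark made immediately after the definition of $\alpha$. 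Hence $\Ad_\alpha(j(\Phi))=j(\Phi)$ and therefore $\mu(\Phi)=\Phi$. If one prefers to see this by hand: writing $j(\Phi)=(F|\phi^1,\phi^2,\eta^1,\eta^2)$ with $\eta^1,\eta^2$ given by \eqref{coordenadas susy 4} and conjugating by $\alpha$, the shift $z\mapsto z-\theta^1\rho^1-\theta^2\rho^2$ built into $\alpha$ cancels precisely the $\rho$-linear and $\rho^1\rho^2$-contributions produced, using that $\Phi\in\Aut^\omega[[1|2]]$ satisfies the $K(1|2)$-relations $D^iF=\phi^1D^i\phi^1+\phi^2D^i\phi^2$ of \eqref{ecuacion de N=k SUSY curvas}.

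Finally, let $C$ be a $SUSY_2$-super curve with atlas $\{\phi_i\}_i$ and cocycle $\phi_{ij}=\phi_i\circ\phi_j^{-1}\in\Aut^\omega[[1|2]]$. By the construction of the dual, $\widehat C$ is presented by the local sections $\{(\alpha,\phi_i)\}_i$ of $\Aut^\omega[[1|4]]\times_{\Aut^\delta[[1|2]]}\Aut^\delta_C$, whose transition functions are $\mu(\phi_{ij})$; by the previous paragraph $\mu(\phi_{ij})=\phi_{ij}$. Thus $\widehat C$ and $C$ are glued from identical cocycles and are canonically isomorphic, i.e.\ $C$ is a fixed point of $\mu$ on $\cM_{S(2)}$. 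The step that really needs care is not this self-duality, which is then formal, but the two pieces of bookkeeping above: that $\mu$ genuinely descends through $j$ to a well-defined map on $\Aut^\Delta[[1|2]]$ (via \eqref{nomaestro} together with $\Aut[[1|2]]\cap\Ad_\alpha(\Aut^\delta[[1|2]])=\Aut^\Delta[[1|2]]$), that the transition cocycle of the associated bundle is indeed $\mu(\phi_{ij})$ under the convention $\Phi*\Psi=\Psi\circ\Phi$, and that $j(\Aut^\omega[[1|2]])$ is precisely the $\Ad_\alpha$-fixed copy of $\Aut^\omega[[1|2]]$ inside $\Aut^\omega[[1|4]]$.
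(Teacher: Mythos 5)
Your argument is correct and is essentially the route the paper takes: the paper states this observation without a written proof, relying on exactly the same two ingredients you spell out, namely that $\Ad_\alpha$ fixes the embedded copy $j(\Aut^\omega[[1|2]])\subset\Aut^\omega[[1|4]]$ pointwise and that the dual curve is therefore glued from the unchanged cocycles $\mu(\phi_{ij})=\phi_{ij}$. The one caveat worth recording is that your claimed inclusion $\Aut^\omega[[1|2]]\subseteq\Aut^\Delta[[1|2]]$ holds only for the orientation-preserving part (the exterior swap $(z|\theta^1,\theta^2)\mapsto(z|\theta^2,\theta^1)$ has Berezinian $-1$, so it is not even in $\Aut^\delta[[1|2]]$), which is why the statement is really about orientable $SUSY_2$-super curves, as you yourself note at the outset.
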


Similar to \eqref{fibrado trivial es S2} we obtain:

\begin{proposition}
	Given a family of $S(1|2)$-super curves $C\to S$, the image $\Ad_\alpha(K)$ in \eqref{automorfismo} defines a family of $1|2$-super curves if and only if $C\to S$ is a family of $S(2)$-super curves. In such case, we obtain the family of dual curves.
\end{proposition}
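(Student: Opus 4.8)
The plan is to mimic the proof of Proposition~\ref{prop:trivialga}, reducing the statement to the triviality of an explicit $\mathbb{G}_a$-bundle. First I would reduce to the case where $S$ is a point: both ``$C\to S$ is a family of $S(2)$-super curves'' and ``the twisted construction is a family of $1|2$-super curves'' are conditions imposed fibrewise over $S$, so it suffices to treat a single $S(1|2)$-super curve $(C,\Delta)$, exactly as in Proposition~\ref{prop:trivialga}. For such a curve, recall from \eqref{fibrado}--\eqref{isomorfismo} the $\mathbb G_a$-bundle $K=\Aut^\Delta[[1|2]]\backslash\Aut^\delta_C\to C$, and let $\Ad_\alpha(K)$ denote the $\mathbb G_a$-bundle over $C$ obtained from it by transport of structure along the involution $\Ad_\alpha$ of $\Aut^\omega[[1|4]]$, i.e.\ the bundle of $\Aut^\Delta[[1|2]]$-orbits in the image of $\Aut^\delta_C$ under the map $\mu$ of \eqref{automorfismo}. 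Since $\Ad_\alpha$ is an isomorphism carrying $\Aut^\delta[[1|2]]$ onto $\Ad_\alpha(\Aut^\delta[[1|2]])$ and, by the diagram \eqref{nomaestro}, $\Aut^\Delta[[1|2]]$ onto itself, it induces an isomorphism of $\mathbb G_a$-bundles $K\simeq\Ad_\alpha(K)$; in particular one is trivial if and only if the other is.

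For the direction $(\Leftarrow)$ there is essentially nothing new to add: if $C$ is an $S(2)$-super curve then $K$, hence $\Ad_\alpha(K)$, is trivial by Proposition~\ref{prop:trivialga}, and the computation carried out just before the statement produces a covering with local sections $\{(\alpha,\phi_i)\}$ of $\Aut^\omega[[1|4]]\times_{\Aut^\delta[[1|2]]}\Aut^\delta_C$ whose transition functions $\Ad_\alpha(\phi_{ij})$ lie in $\Aut^\Delta[[1|2]]\subseteq\Aut[[1|2]]$; these exhibit the twisted bundle as the structure bundle of a $1|2$-super curve, in fact of an $S(2)$-super curve, namely the dual curve $\widehat C$.

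For $(\Rightarrow)$ I would argue as follows. By definition, ``the twisted construction defines a family of $1|2$-super curves'' means that the associated $SUSY_4$-bundle, with structure group reduced along $\mu$ to $\Ad_\alpha(\Aut^\delta[[1|2]])$, admits a further reduction to $\Aut[[1|2]]$; concretely, there is a covering $\{U_i\}$ together with local sections of the twisted bundle of the form $\sigma_i=(\alpha,\phi_i)$, with $\phi_i$ a local section of $\Aut^\delta_C$ --- this is the point where one uses that the sought $1|2$-structure is \emph{compatible} with the $\Ad_\alpha(\Aut^\delta[[1|2]])$-reduction produced by $\mu$ --- such that the transition functions $\sigma_{ij}$ lie in $\Aut[[1|2]]$. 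Being of this form, $\sigma_{ij}=\Ad_\alpha(\phi_{ij})$ lies in $\Ad_\alpha(\Aut^\delta[[1|2]])$ as well, so $\sigma_{ij}\in\Aut[[1|2]]\cap\Ad_\alpha(\Aut^\delta[[1|2]])=\Aut^\Delta[[1|2]]$ by the key algebraic identity recalled before \eqref{nomaestro}. This says precisely that $\{\phi_i\}$ trivialises the $\mathbb G_a$-bundle $\Ad_\alpha(K)$; therefore $\Ad_\alpha(K)$, and with it $K$, is trivial, and Proposition~\ref{prop:trivialga} then yields that $C$ is an $S(2)$-super curve. Running the construction of the paragraph before the statement identifies the resulting $1|2$-super curve with the dual family $\widehat C$.

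I expect the only real obstacle to lie in the first half of the $(\Rightarrow)$ step: justifying that a $1|2$-super-curve structure on the $\mu$-twisted $SUSY_4$-curve can be represented by local sections already lying inside the $\Ad_\alpha(\Aut^\delta_C)$-reduction, rather than in some other $\Aut^\omega[[1|4]]$-translate of it, so that their transition functions are forced into $\Ad_\alpha(\Aut^\delta[[1|2]])$. This is a rigidity/compatibility statement about reductions of structure group, and it can be checked directly by expanding the change-of-coordinates formulae \eqref{coordenadas susy 4} as in Observation~\ref{dualidad}; once it is in place, everything else is formal given the identity $\Aut[[1|2]]\cap\Ad_\alpha(\Aut^\delta[[1|2]])=\Aut^\Delta[[1|2]]$ and Proposition~\ref{prop:trivialga}.
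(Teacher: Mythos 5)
Your proposal is correct and follows essentially the same route as the paper: the paper's proof likewise rests on the identity $\Aut[[1|2]]\cap\Ad_\alpha\left(\Aut^\delta[[1|2]]\right)=\Aut^\Delta[[1|2]]$, which shows that the projection of $\Ad_\alpha(K)$ into $\Aut[[1|2]]\backslash\left(\Aut^\omega[[1|4]]\times_{\Aut^\delta[[1|2]]}\Aut^\delta_C\right)$ is isomorphic to $K$, and then concludes by Proposition \ref{prop:trivialga}. The ``compatibility'' point you flag in the $(\Rightarrow)$ direction is precisely what that isomorphism disposes of; the paper reads the hypothesis ``$\Ad_\alpha(K)$ defines a family of $1|2$-super curves'' as asking for the $\Aut[[1|2]]$-reduction inside the $\Ad_\alpha$-twisted reduction, exactly as you do.
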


\begin{proof}
	The bundle $\Ad_\alpha(K)$ defines a $1|2$-super curve if and only if the projection of $\Ad_\alpha(K)$ over $\Aut[[1|2]]\backslash(\Aut^\omega[[1|4]]\times_{\Aut^\delta[[1|2]]}\Aut^\delta_C)$ has a global section, since $\Aut[[1|2]]\cap\mu\left(\Aut^\delta[[1|2]]\right)=\Aut^\Delta[[1|2]]$ then such projection is isomorphic to $K$, then $\Ad_\alpha(K)$ is trivial if and only if $K$ is trivial, that is if $C\to S$ is an $S(2)$-super curve.
\end{proof}

Then we obtain:

\begin{theorem}\label{teorema de automorfismo}
	There exists an involution $\mu$ of the moduli space $\cM_{S(2)}$ of $S(2)$-super curves. The fixed point set  of $\mu$ consists of the moduli space $\cM_{K(1|2)}$ of orientable $SUSY_2$-super curves. 
\end{theorem}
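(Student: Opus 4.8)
The plan is to package the constructions preceding this theorem into a well-defined map on isomorphism classes and then identify its fixed locus. First I would define $\mu$ on points: given an $S(2)$-super curve $(C,\Delta)$ over $S$, the earlier discussion produces from the $\Aut^\delta[[1|2]]$-bundle $\Aut^\delta_C$ the associated $\Aut^\omega[[1|4]]$-bundle $\Aut^\omega[[1|4]]\times_{\Aut^\delta[[1|2]]}\Aut^\delta_C$, and the map in \eqref{automorfismo} twists it by $\alpha$. Since $(C,\Delta)$ is an $S(2)$-super curve, the $\mathbb{G}_a$-bundle $K$ of \eqref{fibrado} is trivial (Proposition \ref{prop:trivialga}); by the argument just above the theorem the $\alpha$-twist $\Ad_\alpha(K)$ is again trivial, so $\widehat{C}$ is again an $S(2)$-super curve, and I set $\mu([C,\Delta])=[\widehat{C},\widehat{\Delta}]$. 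The commuting square \eqref{nomaestro}, together with the identity $\Aut[[1|2]]\cap\Ad_\alpha(\Aut^\delta[[1|2]])=\Aut^\Delta[[1|2]]$, guarantees that the cocycle defining $\widehat{C}$ lands in $\Aut^\Delta[[1|2]]$, so the construction respects the equivalence relation on $S(2)$-super curves and descends to $\cM_{S(2)}$.

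Second I would check $\mu$ is an involution. Because $\alpha$ is an involution of $\Aut^\omega[[1|4]]$ (verified directly from its defining formula: applying the coordinate change twice sends $z\mapsto z-\theta^1\rho^1-\theta^2\rho^2\mapsto (z-\theta^1\rho^1-\theta^2\rho^2)-\rho^1\theta^1-\rho^2\theta^2 + \dots$, and one checks the signs cancel to give the identity), left translation by $\alpha$ on the associated bundle squares to the identity, hence $\mu^2=\mathrm{id}$ on the level of bundles, and therefore on $\cM_{S(2)}$.

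Third, the fixed-point analysis. A class $[C,\Delta]$ is fixed by $\mu$ precisely when there is an isomorphism of $S(2)$-super curves $C\xrightarrow{\sim}\widehat{C}$, i.e.\ when the $\alpha$-twisted bundle is isomorphic to the original one as a bundle with $\Aut^\Delta[[1|2]]$-structure. By the observation immediately after \eqref{nomaestro}, any $\Phi\in\Aut^\omega[[1|2]]$ satisfies $\mu(\Phi)=\Phi$, so if $C$ admits an atlas whose changes of coordinates lie in $\Aut^\omega[[1|2]]$ — that is, if $C$ is an orientable $SUSY_2$-super curve (recall $K(1|2)\subset S(2)$, so such a curve is automatically an $S(2)$-super curve) — then $\widehat{C}\cong C$ and the class is fixed. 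Conversely, if $[C,\Delta]$ is fixed, one can choose the atlas compatibly so that the cocycle $\phi_{ij}$ satisfies $\mu(\phi_{ij})=\phi_{ij}$; since $\mu=\Ad_\alpha$ restricted to $\Aut^\Delta[[1|2]]$ and the equalizer of $\Ad_\alpha$ on $\Aut^\Delta[[1|2]]$ is exactly $\Aut^\Delta[[1|2]]\cap\Aut^\omega[[1|4]]^{\Ad_\alpha}=\Aut^\omega[[1|2]]$, the changes of coordinates lie in $\Aut^\omega[[1|2]]$, i.e.\ $C$ is an orientable $SUSY_2$-super curve. This exhibits $\cM_{K(1|2)}$ as the fixed locus.

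The main obstacle I anticipate is the converse direction of the fixed-point computation: a priori an isomorphism $C\cong\widehat{C}$ need not be given by an atlas in which each $\phi_{ij}$ is literally $\Ad_\alpha$-invariant, since the isomorphism could conjugate the cocycle by a $1$-cochain. The careful step is to show that one may always adjust the trivializations so that the identification becomes the identity cocycle-wise, reducing the fixed-point condition to the pointwise equalizer $\Aut^\omega[[1|2]]$ of $\Ad_\alpha$ inside $\Aut^\Delta[[1|2]]$; this is where the explicit form of $\alpha$ and the semidirect-product decomposition of $\Aut^\delta[[1|2]]$ (Proposition \ref{descomposicion}) do the real work.
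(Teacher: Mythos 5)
Your proposal follows essentially the same route as the paper: twist the associated $\Aut^\omega[[1|4]]$-principal bundle by the involution $\alpha$, use the triviality of the $\mathbb{G}_a$-bundle $K$ (Proposition \ref{prop:trivialga}) to conclude that the dual of an $S(2)$-super curve is again an $S(2)$-super curve, and use the fact that $\Ad_\alpha$ fixes $\Aut^\omega[[1|2]]$ pointwise to conclude that orientable $SUSY_2$-super curves are fixed by $\mu$. You in fact supply more detail than the paper does on well-definedness of $\mu$ on isomorphism classes and on $\mu^2=\mathrm{id}$.

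The one genuinely incomplete step is exactly the one you flag yourself: the converse inclusion of the fixed locus. An isomorphism $C\cong\widehat{C}$ only says that the cocycle $\Ad_\alpha(\phi_{ij})$ is cohomologous to $\phi_{ij}$, i.e.\ $\Ad_\alpha(\phi_{ij})=g_i\,\phi_{ij}\,g_j^{-1}$ for some $1$-cochain $(g_i)$ with values in $\Aut^\Delta[[1|2]]$. Passing from this to an atlas whose transition functions are literally $\Ad_\alpha$-invariant is a nonabelian descent problem: applying $\Ad_\alpha$ again shows that $h_i:=\Ad_\alpha(g_i)g_i$ twists $\phi_{ij}$ trivially, and one must then extract from $(g_i)$ a genuine modification of the trivializations (morally a ``square root'' of $h_i$), whose existence is a real claim requiring the explicit form of $\alpha$ and the decomposition of Proposition \ref{descomposicion}. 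Your proposal names this obstacle but does not resolve it. For what it is worth, the paper's own argument does not address this converse either --- it only records that $SUSY_2$-super curves are fixed --- so your write-up is no less complete than the published one; but as written, neither establishes that the fixed locus equals $\cM_{K(1|2)}$ rather than merely containing it, and this is the step that would need to be filled in.
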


\begin{observation}
	This duality was observed in \cite{MasonTuiteYamskulna} as an involution of the super algebra $S(2)$.
\end{observation}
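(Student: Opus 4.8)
The plan is to verify this attribution by exhibiting the geometric involution $\mu$ as the integrated form of a Lie superalgebra involution of $S(2)$, and then matching that algebraic involution with the one recorded in \cite{MasonTuiteYamskulna}. Since $\mu$ is induced by the adjoint action $\Ad_\alpha$ on $\Aut^\omega[[1|4]]$ through \eqref{automorfismo}, the natural strategy is to differentiate $\Ad_\alpha$ at the identity. Because $\alpha$ is an involution of $\Aut^\omega[[1|4]]$, its conjugation action is an involutive automorphism of the group, and differentiating along $\exp$ (using the compatibility $j(\exp(X))=\exp(\widehat j(X))$) yields an involutive Lie superalgebra automorphism of $K(1|4)$, which I will again denote $\Ad_\alpha$ by abuse of notation.

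First I would use the Lie algebra inclusion $\widehat j\colon \Der_R(R[[1|2]])\hookrightarrow K(1|4)$ following \eqref{inclusion}, under which $S(2)$ sits inside $K(1|4)$ via the chain $K(1|2)\subset S(2)\subset S(1|2)\subset K(1|4)$ of \eqref{eq:inclusions}. The group-level commutativity of \eqref{nomaestro}, namely that $\Ad_\alpha$ preserves $\Aut^\Delta[[1|2]]$, differentiates to the statement that $\Ad_\alpha$ preserves $\widehat j(S(2))$; hence $\Ad_\alpha$ restricts to an involution of the Lie superalgebra $S(2)$. Next I would use that $\Ad_\alpha$ fixes $\Aut^\omega[[1|2]]$ pointwise, which differentiates to $\Ad_\alpha$ fixing $\widehat j(K(1|2))$ pointwise; this is the infinitesimal counterpart of the fixed-locus description $\cM_{K(1|2)}\subset\cM_{S(2)}$ in Theorem \ref{teorema de automorfismo}.

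To confirm that this is precisely \emph{the same} involution as in \cite{MasonTuiteYamskulna}, I would compute $\Ad_\alpha$ on the explicit generators listed in Observation \ref{generators}: push $L_m$, $J_m^0$, $J_m^1$, $J_m^2$, $G_m^i$, $H_m^i$ and $K=\theta^1\theta^2\partial_z$ through $\widehat j$, apply the coordinate change $\alpha$ (which swaps $(\theta^1,\theta^2)$ with $(\rho^1,\rho^2)$ and shifts $z$ by $-\theta^1\rho^1-\theta^2\rho^2$), and project back, comparing the resulting expressions with the algebraic formulas of that reference. The main obstacle will be exactly this explicit matching of generators, since one must track how the divergence-free condition is transformed under $\alpha$ and check that the output reproduces their involution verbatim. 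A secondary point to verify, consistent with the remark in \ref{no:princ-1}, is that $\Ad_\alpha$ does \emph{not} preserve the larger algebra $S(1|2)$: the image under $\Ad_\alpha$ of the generator $K=\theta^1\theta^2\partial_z$ spanning $S(1|2)/S(2)$ should fail to lie in $\widehat j(S(1|2))$, which is precisely why the algebraic involution lives on $S(2)$ rather than on $S(1|2)$.
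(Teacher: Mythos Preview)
This statement is not a theorem in the paper but a one-line bibliographic remark: the paper offers no proof, argument, or computation here---it simply records that the involution of $S(2)$ underlying $\mu$ already appears in \cite{MasonTuiteYamskulna}. There is therefore nothing to compare your proposal against.

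Your outline is a reasonable sketch of how one \emph{could} verify the attribution (differentiate $\Ad_\alpha$, restrict to $\widehat j(S(2))$, match on the generators of Observation~\ref{generators}), but it goes well beyond what the paper actually does at this point. If your goal is to reproduce the paper, a single sentence citing \cite{MasonTuiteYamskulna} suffices; if your goal is an independent verification, then the explicit generator-by-generator computation you describe is the right thing to carry out, with the caveat that you would need the precise formulas from \cite{MasonTuiteYamskulna} in hand to complete the match.
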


\begin{example}
	For a split $S(2)$ curve $C$ associated to $C_0$ and the vector bundle $E$, the dual curve $\widehat{C}$ is also split and is associated to $C_0$ and the vector bundle $E^*\otimes \Omega_{C_0}$.
\end{example}
	
\section{Families of super curves}
 \label{sec:families}
 In this section we construct examples of families of supercurves. We construct families of  $S(1|2)$ supercurves that are not $S(2)$ supercurves. Non-split $S(1|2)$ supercurves over a purely even base. Non-split $S(2)$ supercurves over a super-scheme. 

 \subsection{A family of $S(2)$-super curves}

\begin{example}\label{familia S(2) no split}
	Consider the $1|2$-dimensional family $\mathbb C^{1|3}\to\mathbb C^{0|1}=\spec(\mathbb C[\eta])$, with $\eta$ an odd variable. Over it we have the two automorphism given by
	\begin{equation*}
	\begin{split}
	A(t|\theta,\rho):=&\ (t+1|\theta,\rho),\\
	B(t|\theta,\rho):=&\ (t+\tau+\theta\eta|\theta,\rho),
	\end{split}
	\end{equation*}
	with $\tau\in\mathbb C$, $\Im(\tau)>0$. The quotient $\mathbb T_{\tau}:=\mathbb C^{2|3}\to\spec(\mathbb C[\eta])/\langle A,B\rangle$ is an analytical family of super torus. In order to see that this quotient is algebraic, let us recall the Weierstrass function $\wp$ given by the parameter $\tau$. Then we obtain the closed immersion:
	\begin{equation*}
	\begin{split}
	\mathbb T_{\tau}\to&\ \mathbb P^2(L)\\
	(t|\theta,\rho)\mapsto&\ (\wp(t;\tau+\theta\eta),\partial_t\wp(t;\tau+\theta\eta),1|\theta,\rho),
	\end{split}
	\end{equation*}
	where $E$ is a rank 2 trivial bundle over $\mathbb P^2$. The image of this immersion is given by the equation:
	\begin{equation}\label{equa}
	y^2=4x^3-g_2(\tau+\phi\eta)x-g_3(\tau+\phi\eta),
	\end{equation}
	with $(x,y,1|\phi,\psi)\in\mathbb P^{2}(L)$. Since \eqref{equa} is even, then $\mathbb T_\tau$ is a $1|2$-dimensional family.
	
	To see that this family is not split, suppose that there exists a $1|0$-dimensional family $M\to \spec(\mathbb C[\eta])$ and a rank 2 bundle $E_0$ over the family such that $M(E_0)=\mathbb T_\tau$. Observe that such family should be a family of torus, also the change of coordinates over any torus should have the form $\Phi(t|\theta,\rho)=(\phi(t)|\theta\lambda_{11}(t)+\rho\lambda_{12}(t),\theta\lambda_{21}(t)+\rho\lambda_{22}(t))$ and in this case $(\lambda_{ij}(t))$ corresponds to the cocycle of $E_0$. Over $\cT_{\mathbb T_\tau/S}$, $S=\spec(\mathbb C[\eta])$, we have the global section given by $\partial_t$, so we have the exact sequence
	\begin{equation*}
	0\to\langle\partial_t\rangle\to\cT_{\mathbb T_\tau/S}\to\cT_{\mathbb T_\tau/S}/\langle\partial_t\rangle\to 0.
	\end{equation*}
	Given a change of coordinates $\Phi(t|\theta,\rho)=(\phi(t)|\theta\lambda_{11}(t)+\rho\lambda_{12}(t),\theta\lambda_{21}(t)+\rho\lambda_{22}(t))$ then the change of coordinates of $\cT_{\mathbb T_\tau/S}/\langle\partial_t\rangle$ are given by $(\lambda_{ij}(t))$. On the other side, for coordinates $(t,\theta,\rho)$ the vector field $\partial_{\theta},\partial_\rho$ are well defined global section in $\cT_{\mathbb T_\tau/S}/\langle\partial_t\rangle$, that is $\cT_{\mathbb T_\tau/S}/\langle\partial_t\rangle$ is a trivial bundle, in particular $E_0$ is also a trivial bundle.  From this, the tangent bundle over $M(E_0)$ should be trivial, since the tangent bundle over the torus is trivial and we can define a global section $\partial_{\theta'}$ for any global section $\theta'$ in $E_0$. If this happens, then the space of global sections has dimension $1|2$. Let us take a section $s$ of $\cT_{\mathbb T_\tau/S}$. With respect to the et\'ale topology, from the projection $\mathbb T_{\tau}:=\mathbb C^{2|2}\to\spec(\mathbb C[\eta])\to\mathbb T_{\tau}:=\mathbb C^{2|3}\to\spec(\mathbb C[\eta])/\langle A,B\rangle$, we obtain a section of the tangent bundle $\cT_{\mathbb C/S}$, such section should have the form
		\begin{equation}\label{relaciones}
		\begin{split}
		s(t|\theta,\rho)=&\ s(t+1|\theta,\rho),\\
		s(t|\theta,\rho)=&\ s(t+\tau+\theta\eta|\theta,\rho).
		\end{split}
		\end{equation}
	Using the decomposition $s(t|\theta,\rho)=a(t|\theta,\rho)\partial_t+b(t|\theta,\rho)\partial_{\theta}+c(t|\theta,\rho)\partial_{\rho}$. From the relations \eqref{relaciones}, we obtain that $b$ should satisfy 
		\begin{equation}\label{relaciones1}
		\begin{split}
		b(t|\theta,\rho)=&\ b(t+1|\theta,\rho),\\
		b(t|\theta,\rho)=&\ b(t+\tau+\theta\eta|\theta,\rho).
		\end{split}
		\end{equation}
	from this $b$ should be constant. Analogously $c$ is constant. On the other side, we get:
		\begin{equation}\label{relaciones2}
		\begin{split}
		a(t|\theta,\rho)=&\ a(t+1|\theta,\rho),\\
		a(t|\theta,\rho)=&\ a(t+\tau+\theta\eta|\theta,\rho)-b\rho.
		\end{split}
		\end{equation}
	Similarly to \eqref{relaciones1}, taking derivative on \eqref{relaciones2} we obtain that $a$ is constant and $b=0$. That is, the vector space of sections has dimension $1|1$, and this contradicts that such space of sections has dimension $1|2$. Then the family of torus $\mathbb T_\tau$ is not split.	
	
	This family has a global Berezinian given by $\Delta=[dz|d\theta d\rho]\in H^0(C,\Ber_{C/S})$. Since it has a dual curve given by the quotient of $\mathbb C^{1|3}\to\spec(\mathbb C[\eta])$ by the automorphisms:
		\begin{equation*}
		\begin{split}
		\widetilde A(t|\theta,\rho):=&\ (t+1|\theta,\rho),\\
		\widetilde B(t|\theta,\rho):=&\ (t+\tau|\theta-\eta,\rho).
		\end{split}
		\end{equation*}
	The quotient $\widehat{\mathbb T}_{\tau}:=\mathbb C^{1|3}/\langle A,B\rangle\to\spec(\mathbb C[\eta])$ is the dual curve. Then $\mathbb T_{\tau}$ is a family of $S(2)$-super curves that is no split.
	
	Also, observe that this family does not have a structure of $SUSY_2$-super curve. Since, if this family has a rank $0|2$ distribution on $\cT_{\mathbb T_\tau/S}$.
\end{example}

	\subsection{A family of $S(1|2)$-super curves}
	\begin{example}Over the affine plane $\mathbb C^{1|2}$ consider the following relations
		\begin{enumerate}
			\item $A(z|\theta^1,\theta^2):=(z+1|\theta^1,\theta^2)$,
			\item $B(z|\theta^1,\theta^2):=(z+\tau+\theta^1\theta^2|\theta^1,\theta^2)$,
		\end{enumerate}
		where $\tau$ is even. The quotient $\mathbb C^{1|2}/\langle A,B\rangle$ is an elliptic curve $\mathbb T_{\tau}$ with Berezinian $[dz|d\theta^1d\theta^2]$. Consider the $SUSY_4$-super curve associated, $\widetilde{\mathbb T}_{\tau}$, since $\cT_{\widetilde{\mathbb T}_{\tau,\rho}}$ does not have a splitting. Then $\mathbb T_{\tau}$ does not have an $S(2)$-structure.
\label{ex:non-split12}
	\end{example}

\section{On the moduli space of curves with a trivial Berezinian}
 \label{sec:moduli}

In \cite{Vaintrob}, Vaintrob studied the moduli space of super curves with a fixed Berezinian. Given a family $X\to S$ of such supercurves, with special fiber $X_0$ over $s_0 \in S$, Vaintrob views the generic fiber as a deformation of the curve $X_0$ together with a deformation of a section $\Delta\in H^0(X_0,\Ber_{X_0})$. 

We will follow the recipe given in \cite{lebrun1988} to study the moduli space of $S(2)$-super curves. If $S$ is a scheme, or a purely-even super-scheme, we will say that such deformations are \emph{even}. For a super scheme $S$, if we fix a family $C_0\to S_\rd$ any extension $C\to S$ is called an \emph{odd} deformation of $C_0\to S_\rd$.

We divide the process in three steps:

\begin{enumerate}
	\item We study the restriction of the functor to schemes. Suppose that such functor is represented by the scheme $A$ and comes with a universal curve $\cA\to A$.
	\item We focus on odd deformations of such curves. Suppose that such deformation is given by a functor $S\to E(S)$, where $S$ is a scheme and $E(S)$ is a set.
	\item We check what kind of automorphisms has our candidate to the moduli space.
\end{enumerate}

The construction is divided in these steps inspired by the following lemmas:

\begin{lemma}
	Let $M$ be a super scheme, that defines the functor $h_M:SSch\to Sets$, $h_M(N)=Hom_{SSch}(N,M)$, then the restriction $h_M|_{Sch}$ is represented by $M_\rd$.
\end{lemma}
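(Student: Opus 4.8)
The plan is to reduce the statement to the affine case already treated in Lemma \ref{caracterizacion de reduccion} and then globalize by a routine gluing argument, using the functoriality of the reduction construction from Observation \ref{reduccion de haces}. Concretely, for every scheme $N$ I want to produce a bijection
\[ \Hom_{SSch}(N,M) \xrightarrow{\ \sim\ } \Hom_{SSch}(N,M_{\rd}) = \Hom_{Sch}(N,M_{\rd}), \]
natural in $N$. The last equality is immediate: since $N$ and $M_{\rd}$ are ordinary schemes (their structure sheaves are purely even), a morphism of super schemes between them is just a morphism of schemes. Naturality of the first bijection then says precisely that $h_M|_{Sch}$ is represented by $M_{\rd}$.

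First I would construct the map. Given a morphism of super schemes $f\colon N\to M$ with $N$ a scheme, its comorphism on any open $U\subseteq M$ is a homomorphism of super algebras $\cO_M(U)\to \cO_N(f^{-1}U)$ whose target is purely even; hence it kills the ideal $\cO_M(U)_1+\cO_M(U)_1^2$ and factors uniquely through $\cO_M(U)\to\cO_{M,\rd}(U)$. Sheafifying over $M$, this exhibits $f$ as factoring uniquely through the closed embedding $M_{\rd}\hookrightarrow M$ of Observation \ref{reduccion de haces}, yielding $\bar f\colon N\to M_{\rd}$. Conversely, composing a morphism $N\to M_{\rd}$ with $M_{\rd}\hookrightarrow M$ returns a morphism $N\to M$. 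These two assignments are visibly mutually inverse and natural in $N$, provided the factorization just described is well defined globally.

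To see that, I would check it affine-locally. Choosing an affine cover $\{U_i=\spec R_i\}$ of $M$, so that $\{U_{i,\rd}=\spec (R_i)_{\rd}\}$ is an affine cover of $M_{\rd}$, the factorization over $U_i$ is, on sections over any open $V\subseteq f^{-1}U_i$ of $N$, exactly the bijection $\Hom_{SAlg}(R_i,\cO_N(V))\cong \Hom_{CAlg}((R_i)_{\rd},\cO_N(V))$ of Lemma \ref{caracterizacion de reduccion} applied to the purely even ring $\cO_N(V)$. Because the reduction maps $R_i\to (R_i)_{\rd}$ commute with localization, these local bijections are compatible with restriction.

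The only genuine point requiring attention — the "main obstacle", modest as it is — is the globalization: one must ensure that the unique local factorizations agree on overlaps $U_i\cap U_j$ and are independent of the chosen cover, so that they glue to a global morphism $N\to M_{\rd}$. This is forced by the uniqueness clause in the universal property of the reduction sheaf (Observation \ref{reduccion de haces}): the lift of $\cO_M(U)\to\cO_N(f^{-1}U)$ through $\cO_{M,\rd}(U)$ is unique, hence automatically compatible with restriction to smaller opens, and so the pieces glue canonically. With this, $h_M|_{Sch}\cong h_{M_{\rd}}$ as functors on $Sch$, i.e.\ $h_M|_{Sch}$ is represented by $M_{\rd}$.
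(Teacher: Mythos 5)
Your proof is correct and follows essentially the same route as the paper, which simply cites Lemma \ref{caracterizacion de reduccion} and leaves the affine-to-global gluing implicit. Your write-up just makes the factorization through $M_{\rd}\hookrightarrow M$ and its compatibility with restriction explicit.
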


\begin{proof}
	This follows directly from Lemma \ref{caracterizacion de reduccion}.
\end{proof}

\begin{lemma}
	Let $M$ be a split super scheme, defined by the scheme $j:M_\rd\hookrightarrow M$ and a locally free $\cO_{M_\rd}$-module $E$, then $E^*\simeq \ker(j^*\Omega_M\to\Omega_{M_\rd})$.

\end{lemma}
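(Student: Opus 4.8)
The statement is local on $M_\rd$, and the plan is to compute $\ker(j^*\Omega_M\to\Omega_{M_\rd})$ on a trivializing cover and then patch. Over an open $U\subseteq M_\rd$ on which $E$ is free, the splitness hypothesis gives, by Observation~\ref{variedades split}, $\cO_M(U)=\cO_{M_\rd}(U)[\theta^1,\dots,\theta^n]$, the Grassmann algebra over $\cO_{M_\rd}(U)$ in odd variables $\theta^1,\dots,\theta^n$ attached to a frame of $E|_U$, with $j$ the closed embedding ``$\theta^1=\dots=\theta^n=0$''. The first fundamental sequence of the Grassmann extension $\cO_{M_\rd}(U)\to\cO_M(U)$ splits canonically, so
\[
j^*\Omega_M|_U \;=\; \Omega_{M_\rd}|_U \;\oplus\; \bigoplus_{j=1}^{n}\cO_{M_\rd}(U)\,\overline{d\theta^j},
\]
with the map to $\Omega_{M_\rd}$ being the projection onto the first summand. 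Hence, locally, $\ker(j^*\Omega_M\to\Omega_{M_\rd})$ is the free rank $0|n$ module on the $\overline{d\theta^j}$; equivalently, it is the conormal sheaf $\mathcal{J}/\mathcal{J}^2$ of $j$ (with $\mathcal{J}=\ker(\cO_M\to\cO_{M_\rd})$), via $[\theta^j]\mapsto\overline{d\theta^j}$.

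The substance is then to identify this locally free sheaf globally. If two trivializing frames of $E$ differ by a transition matrix $a=(a_{jk})$, the corresponding change of odd coordinates has the form $\theta'^j=\sum_k a_{jk}\theta^k + (\text{terms in }\mathcal{J}^2)$, accompanied by some change of the even coordinates; applying $j^*$ kills every higher-order term and every even-coordinate contribution, so that $\overline{d\theta'^j}=\sum_k a_{jk}\,\overline{d\theta^k}$. I would therefore read off the cocycle of $\ker(j^*\Omega_M\to\Omega_{M_\rd})$ in these frames, compare it with the cocycle of $E$ normalized as in Observation~\ref{variedades split} (the same bookkeeping that in the split case produces the identification $j^*\Ber_M\simeq\Omega^{\mathrm{top}}_{M_\rd}\otimes\det E^*$), and conclude that the patched sheaf is $E^*$. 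The cocycle identity required for the patching is inherited verbatim from the one satisfied by the transition matrices of $E$.

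The only delicate point is precisely this final comparison of cocycles: one must keep track of the variance introduced by the definition $\Omega_M=(\Der\cO_M)^*$ together with the chosen convention for $S_{\cO_{M_\rd}}(E)$, so that the kernel is identified with $E^*$ rather than with $E$. Beyond that the argument is entirely formal — splitting of the first fundamental sequence for a Grassmann extension, the elementary structure of Grassmann algebras, and functoriality of $j^*$ — so I do not anticipate any real obstacle apart from this bookkeeping.
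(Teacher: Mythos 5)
Your local analysis is correct and is essentially all the paper itself supplies (its proof of this lemma is the single line ``this follows directly from Observation \ref{variedades split}''): on a trivializing open set the first fundamental sequence of the Grassmann extension splits, the kernel is freely generated by the classes $\overline{d\theta^{j}}$, and it coincides with the conormal sheaf $\mathcal{J}/\mathcal{J}^{2}$. The problem is the step you defer as ``bookkeeping'': it is the crux, and as you have set it up it yields the answer \emph{dual} to the one you claim. You compute that the $\overline{d\theta^{j}}$ transform by the matrix $(a_{jk})$, where $(a_{jk})$ is precisely the transition matrix of $E$ in the normalization of Observation \ref{variedades split} --- the $\theta^{j}$ \emph{are} a local frame of $E$ sitting inside $\cO_M=S_{\cO_{M_{\rd}}}(E)$, and they transform by $a$. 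A sheaf whose local frames transform by the transition matrices of $E$ is $E$ (up to the parity shift $\Pi$), not $E^{*}$; equivalently, your own identification $\ker(j^{*}\Omega_M\to\Omega_{M_{\rd}})\simeq\mathcal{J}/\mathcal{J}^{2}\simeq\Pi E$ already settles the question. No care with the convention $\Omega_M=(\Der\,\cO_M)^{*}$ changes this: dualizing $\Der\,\cO_M$ turns the inverse-transpose law of the $\partial_{\theta^{j}}$ back into the direct law of the $d\theta^{j}$.

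The appeal to $j^{*}\Ber_M\simeq\Omega^{\mathrm{top}}_{M_{\rd}}\otimes\det E^{*}$ does not supply the missing dualization either: the $\det E^{*}$ there arises from the factor $\det(D)^{-1}$ applied to the odd--odd block in the Berezinian formula \eqref{bere}, i.e.\ from the definition of $\Ber$, not from the way the $d\theta^{j}$ themselves transform inside $\Omega_M$. So to finish you must either (i) conclude $\ker(j^{*}\Omega_M\to\Omega_{M_{\rd}})\simeq\Pi E$ and note that the statement as printed is correct verbatim only under the opposite convention, in which the split super scheme attached to $E$ has structure sheaf $S_{\cO_{M_{\rd}}}(E^{*})$; or (ii) produce an inverse-transpose in the transition law of the $\overline{d\theta^{j}}$, which your own formula precludes. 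As written, the proposal proves one statement and asserts its dual, and the gap is exactly the comparison you postponed.
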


\begin{proof}
	This follows directly from Observation \ref{variedades split}.
\end{proof}

That is, if our moduli space is given by a locally split super scheme, then such space should be described by the reduced space representing even deformations and the fiber bundle defined (locally) by the purely odd deformations with a fixed even base. To finish this process, we find the automorphisms that fix such families of curves.

\begin{observation}
	The process described above is justified by the following construction: Suppose that we have a family $X\to S$ of $S(2)$ curves, then the diagram
		\begin{equation}\label{familia reducida}\xymatrix{
			X_0:=X\times_{S_\rd}S\ar[r]\ar[d]&X\ar[d]\\
			S_\rd\ar[r]&S}
		\end{equation}
	Since $S_\rd$ is a scheme and $X_0\to S_\rd$ is family of $S(2)$ super curves, then this family is given by a morphism $\phi_0:S_\rd\to A$ and the pullback $\phi_0^*\cA\to S_\rd$. Since locally, $S$ is a split scheme, then in an open neighbourhood $U\subset S_\rd$ there exists a fiber bundle $E$ such that, locally, $S|_U=U(E)$. Assuming that $U$ is affine we will proof uniqueness on the extension of such family $X_0(U)\to U$. 
	
	Finally the family $\pi:X\to S$ is going to be described as the gluing of local open pieces $\{\pi^{-1}(U_i)\to U_i\}_i$ through morphisms $U_i\to A$. Finally, the inner automorphism is going to give us an orbifold description of such object.
\end{observation}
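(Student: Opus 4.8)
The plan is to reconstruct the family $\pi\colon X\to S$ from its restriction to the reduced base together with purely odd local data, and then to show that the reconstruction is forced. First I would check that the restricted family $X_0=X\times_S S_{\rd}$ is again a family of $S(2)$ curves over the \emph{scheme} $S_{\rd}$: pulling back along the closed embedding $S_{\rd}\hookrightarrow S$ carries each coordinate chart compatible with $\Delta$ to such a chart and keeps the transition maps inside $\Aut^{\Delta}[[1|2]]$, so the $S(2)$ structure descends. Since $S_{\rd}$ is a scheme, the even-deformation classification of Proposition \ref{even data} (equivalently, representability of the functor on schemes by $A$ with universal curve $\cA\to A$ in step 1) supplies a \emph{unique} classifying morphism $\phi_0\colon S_{\rd}\to A$ together with an isomorphism $X_0\simeq\phi_0^{*}\cA$ of $S(2)$ families; indeed, over the even base $X_0$ is split by Theorem \ref{split}, hence classified by a rank two bundle $E$ with $\det E\simeq\Omega$, i.e. by a point of $A$. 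This accounts for the even content of the square in the statement.

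The heart of the argument is the local uniqueness of the odd extension. I would work over an affine open $U\subset S_{\rd}$ on which $S$ is split, say $S|_U\simeq U(V)$ for a locally free sheaf $V$, and filter by powers of the nilpotent ideal $J$ of odd functions, presenting $S|_U$ as an iterated square-zero thickening of $U$. Extending the family across one square-zero step $0\to\cI\to\cO'\to\cO\to0$ of base rings is a deformation problem for the $S(2)$ structure, governed by the fibrewise sheaf of divergence-free-derived vector fields---the $\pi^{*}\cO_S$-module $S(2)$ introduced above---with coefficients in $\cI$: the obstruction lies in $H^{2}(X_0|_U,\,S(2)\otimes\cI)$, the extensions (when unobstructed) form a torsor under $H^{1}(X_0|_U,\,S(2)\otimes\cI)$, and the automorphisms inducing the identity on the previous stage form the corresponding $H^{0}$. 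Here affineness is decisive: by the Leray spectral sequence quasi-coherent higher cohomology on $U$ vanishes, so $H^{n}(X_0|_U,-)=H^{0}(U,R^{n}\pi_{*}(-))$, and since the fibres are curves $R^{n}\pi_{*}=0$ for $n\ge2$. Thus the obstruction group vanishes, every square-zero step is unobstructed, and the residual $H^{1}=H^{0}(U,R^{1}\pi_{*}(\cdots))$-torsor is exactly the odd deformation class recorded by the functor $E$ of step 2 (concretely the class in $H^{1}(C_0,G)$ of Proposition \ref{odd part}). Reading that class off from the given $X$ pins down the lift at each stage, so $X|_{S|_U}\to S|_U$ is, up to isomorphism, the unique extension of $X_0|_U$ realizing it. \textbf{Setting up this tangent/obstruction theory for the $S(2)$ structure and verifying that affineness together with relative dimension one forces the successive torsors to split is the main obstacle.}

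Finally I would glue and account for automorphisms. Covering $S_{\rd}$ by affines $\{U_i\}$ over which $S$ is split, the previous step determines each $X|_{S|_{U_i}}\to S|_{U_i}$ up to isomorphism from $\phi_0|_{U_i}$ and the local odd class; on each overlap the two restrictions are isomorphic, and a choice of such isomorphisms furnishes descent data recovering $\pi\colon X\to S$, with gluing data encoded by the classifying maps $U_i\to A$ and the local odd classes. These gluing isomorphisms are unique only up to the automorphism sheaf $H^{0}(X_0,S(2)\otimes\cdots)$, so the cocycle condition holds only modulo those automorphisms; this is precisely why the resulting moduli object is an orbifold (stack) rather than a scheme, and why the inner automorphisms enter the statement. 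To make this precise I would identify the full automorphism group of such a family: for genus $g\ge2$ the automorphisms of the reduced curve are finite and the infinitesimal odd automorphisms are controlled by the $H^{0}$ of the fibrewise vector-field sheaf, which vanishes in that range, leaving a finite group acting on the classifying data $(\phi_0,\text{odd class})$ and yielding the orbifold quotient description asserted above.
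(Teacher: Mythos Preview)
Your proposal is sound in outline, but it takes a different route from the paper's implementation of this observation. The observation itself is a roadmap rather than a theorem with its own proof; the paper fills it in over the next three subsections (``The even part'', ``The odd part'', ``Inner Automorphism''), and it is the second of these where your approach diverges.

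For the odd extension step, the paper does \emph{not} proceed by iterated square-zero thickenings of the base and an abstract tangent/obstruction formalism. Instead, assuming $S$ split, it fixes a closed point $s_0\in S$, covers the \emph{fibre} $C_{s_0}$ (not the base) by affine opens $\{U_i\}$, and invokes a local triviality result (Sernesi, Theorem 1.2.4) to write the family as $U_i\times S\to S$ over each piece. The extension is then encoded entirely in the transition automorphisms $\Phi_{ij}\in\Aut((U_i\cap U_j)\times S)$; factoring out the reduced part $\phi_{ij}$ leaves $\phi_{ij}^{-1}\Phi_{ij}=\exp(X_{ij})$ with $X_{ij}$ an even, nilpotent section of $\mathcal N\otimes\mathcal D$, and the class in $H^1(C_0,G)$ with $G=\exp((\mathcal N\otimes\mathcal D)_0)$ is read off directly. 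No obstruction group is ever written down, because the cocycle description makes the parametrization immediate.

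Your deformation-theoretic approach is more conceptual and would certainly work, but the obstacle you flag---setting up the correct tangent/obstruction theory for the $S(2)$ structure---is exactly what the paper sidesteps. In particular you would need to verify that the controlling sheaf is the $\pi^*\cO_S$-module $S(2)$ (not an $\cO_C$-module, so the Leray/quasi-coherent argument requires care), and that exponentiation identifies your $H^1$-torsor with the paper's non-abelian $H^1(C_0,G)$. The paper's concrete cocycle route buys you that identification for free at the cost of the Sernesi input; your route buys a cleaner explanation of \emph{why} there is no obstruction, at the cost of extra foundational work. On the automorphism side your sketch agrees with the paper: infinitesimal automorphisms along the odd directions are trivial (the paper's Lemma that $\psi_{\rd}=\mathrm{id}$ forces $\psi=\mathrm{id}$), and for $g\geq 2$ with stable $E$ the residual group is the finite $\{\pm 1\}\subset\Aut_{C_{\rd}}(E)$, yielding the $\mathbb Z/2\mathbb Z$ gerbe structure rather than a general finite group.
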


\subsection{The even part}

Observe that for an $S(2)$-super curve, over its $SUSY_4$-super curve $\widetilde{C}$ we have the local vector fields $ D^i_+=\partial_{\rho^i}$ and $ D^j_-=\rho^j\partial_{z}+\partial_{\theta^j}$ on the coordinate charts $(z|\theta^1,\theta^2,\rho^1,\rho^2)$ given by \eqref{coordenadas n a Susy 2n} satisfying the relation $\{D^i_+,D^j_- \}=\delta_{ij}\partial_z$. The change of coordinates satisfies equations \eqref{ecuacion de otras coordenadas de N=k SUSY curvas}:
\begin{equation*}
D^j_-F=\eta^1D^j_-\phi^1+\eta^2D^j_-\phi^2,
\end{equation*}
thus we have
\begin{equation*}
\{D^i_+,D^j_-\}F=D^i_+\eta^1D^j_-\phi^1-\eta^1\{D^i_+,D^j_-\}\phi^1+D^i_+\eta^2D^j_-\phi^2-\eta^2\{D^i_+,D^j_-\}\phi^2,
\end{equation*}
and that implies:
\begin{equation*}
\delta_{ij}(\partial_zF+\eta^1\partial_z\phi^1+\eta^2\partial_z\phi^2) = D^i_+\eta^1D^j_-\phi^1+D^i_+\eta^2D^j_-\phi^2.
\end{equation*}

It follows from \eqref{coordenadas susy 4} that
\begin{equation*}
\left[\begin{matrix} D^1_+\eta^1 & D^1_+\eta^2\\
D^2_+\eta^1 & D^2_+\eta^2 \end{matrix}\right] = \left[\begin{matrix} D^2_-\phi^2 & - D^2_-\phi^1\\
- D^1_-\phi^2 & D^1_-\phi^1 \end{matrix}\right].
\end{equation*}
Finally, we get
\begin{equation}\label{determinant}
\partial_zF+\eta^1\partial_z\phi^1+\eta^2\partial_z\phi^2= D^1_-\phi^1 D^2_-\phi^2- D^1_-\phi^2 D^2_-\phi^1,
\end{equation}
then we have that the projection:
\begin{equation*}
\det E\to\Omega_C,
\end{equation*}
is an isomorphism. 

Conversely, suppose that we start with a $1|0$-family over a purely even base $S$, $\pi_0:C_0\to S$,  and a rank $2|0$ bundle $E$ with an isomorphism $\beta:\det E\to \Omega^1_{C_0}$. Given such data, we construct $\pi:C\to S$ given by
\begin{equation*}
\begin{split}
C_{\text{rd}}=&\ C_{0},\\
\mathcal{O}_{C,0}=&\ \mathcal{O}_{C_0}\oplus\det E,\\
\mathcal{O}_{C,1}=&\ \Pi E.
\end{split}
\end{equation*}

The $S(2)$ structure is given (locally) by the coordinates $z$ and a local frame $\theta^1,\theta^2$ of $E$ such that $\beta(\theta^1\otimes\theta^2)=dz$. This data defines the local section of the Berezinian $[dz|d\theta^1d\theta^2]$. Since $\beta$ is an isomorphism, then such class is well defined. Also, observe that for any change of coordinates compatible with the Berezinian, $\Phi=(F|\phi^1,\phi^2)$, where $F(z|\theta^1,\theta^2)=F(z)$, the decomposition (in Proposition \ref{descomposicion 0}), $\Phi=\exp(X)\circ\phi$, gives us that $X\in S(2)$.

Similar to \cite{manin2014topics} we obtain:

\begin{proposition}\label{even data}
	For any family $\pi_0:C_0\to S$ of relative dimension $1|0$ over a purely even base, the following data are equivalent:
	\begin{enumerate}
		\item An $S(2)$-family of curves $\pi:C\to S$ over $S$ with $C_{\text{rd}}=C_{0,\text{rd}}$ and $\mathcal{O}_{C,0}=\mathcal{O}_{C_0}(1)$, where $C_0(1)$ is the first neighborhood of the diagonal in $C_0\times_S C_0$.
		\item A rank $2$ bundle $E$ and an isomorphism $\det E\overset{\beta}{\to} \Omega_{C_0/S}$ up to equivalence: a pair $(E,\beta)$ is equivalent to $(E',\beta')$ is there exists an isomorphism, such that the following diagram commutes
		\begin{equation*}
		\xymatrix{\det E\ar[r]\ar[rd]_{\beta} & \det E'\ar[d]^{\beta'}\\
			& \Omega_{C_0}}.
		\end{equation*}
	\end{enumerate}
\end{proposition}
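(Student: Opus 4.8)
The plan is to establish the equivalence in Proposition~\ref{even data} by constructing explicit maps in both directions and checking they are mutually inverse on isomorphism classes. The two directions have essentially already been prepared in the discussion preceding the statement, so the proof is mostly a matter of organizing that material.

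First I would treat the implication (2)$\Rightarrow$(1). Given $(E,\beta)$ over $C_0 \to S$, set $\cO_{C,0} = \cO_{C_0} \oplus \det E$ with $\det E$ a square-zero ideal, and $\cO_{C,1} = \Pi E$, as in the construction above. One must check this is a well-defined super curve of relative dimension $1|2$: the key point is that locally, choosing a frame $\theta^1,\theta^2$ of $E$ with $\beta(\theta^1 \otimes \theta^2) = dz$ for a local coordinate $z$ on $C_0$, the algebra is $\cO_S$-isomorphic to the free super-commutative algebra on $z|\theta^1,\theta^2$, so $C$ is a smooth $1|2$-dimensional super curve. The section $[dz|d\theta^1 d\theta^2]$ of $\Ber_{C/S}$ glues: under a change of frame the transition matrix has Berezinian $\det(\text{frame change})^{-1}\cdot \partial_z w$, which equals $1$ precisely because $\beta$ intertwines the two frames with the two coordinate differentials. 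Finally, one invokes the already-noted fact that any change of coordinates $(F|\phi^1,\phi^2)$ preserving this Berezinian with $F = F(z)$ decomposes via Proposition~\ref{descomposicion 0} as $\exp(X)\circ\phi$ with $X \in S(2)$, using the criterion $\partial_{\theta^1}\partial_{\theta^2}(X\cdot z)=0$ from Observation~\ref{generators}; this is what makes $C$ an $S(2)$-super curve rather than merely an $S(1|2)$-super curve. The condition $\cO_{C,0} = \cO_{C_0}(1)$ is automatic from $\cO_{C,0} = \cO_{C_0}\oplus\det E$ together with $\det E \simeq \Omega_{C_0/S}$.

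For (1)$\Rightarrow$(2): given an $S(2)$-family $C \to S$ with the stated reduction, Theorem~\ref{split} shows $C$ is split over the purely even base $S$, so $C \simeq \spec \Sym_{\cO_{C_0}} E[-1]$ for a rank-$2$ bundle $E$ (namely $E = (\ker(j^*\Omega_C \to \Omega_{C_0}))^*$, by the second lemma in Section~\ref{sec:moduli}). The trivialization $\Delta$ of $\Ber_{C/S}$ restricts, via $j^*\Ber_C \simeq \Omega_{C_0/S}\otimes \det E^*$, to an isomorphism $\det E \xrightarrow{\beta} \Omega_{C_0/S}$; that $\beta$ is an isomorphism rather than merely a map is exactly the content of the computation culminating in \eqref{determinant}. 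One checks this construction is inverse to the one above: starting from $(E,\beta)$, building $C$, and extracting the kernel bundle recovers $E$ with $\beta$, and conversely.

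Lastly I would verify the equivalence relation. Two pairs $(E,\beta)$, $(E',\beta')$ give isomorphic $S(2)$-families iff there is an $\cO_{C_0}$-bundle isomorphism $\alpha: E \to E'$ with $\beta'\circ\det\alpha = \beta$: an isomorphism of the split super curves respecting the Berezinian sections is precisely such an $\alpha$ on the degree-$1$ part (and its induced map on $\det$ on the degree-$2$ part of $\cO_{C,0}$), and compatibility with $\Delta$ translates to the triangle commuting. I expect the main obstacle to be the bookkeeping in the gluing/well-definedness checks in the (2)$\Rightarrow$(1) direction — specifically, confirming that arbitrary transition functions of the constructed curve, not just ones in a convenient normal form, land in $\Aut^\Delta[[1|2]]$ — but this is controlled by the explicit $S(2)$ criterion from Observation~\ref{generators}, so it is routine rather than deep. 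The whole argument parallels the $SUSY$-curve case in \cite{manin2014topics}, which is why the statement is phrased as ``similar to'' that reference.
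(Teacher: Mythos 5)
Your proposal is correct, and for the direction (2)$\Rightarrow$(1) it coincides with the paper's argument: the paper's proof of that implication is literally the sentence ``the previous comment shows $(2)\to(1)$'', the previous comment being exactly the construction you spell out ($\cO_{C,0}=\cO_{C_0}\oplus\det E$, $\cO_{C,1}=\Pi E$, the gluing of $[dz|d\theta^1 d\theta^2]$ controlled by $\beta$, and the decomposition via Proposition \ref{descomposicion 0} showing that Berezinian-preserving transition functions with $F=F(z)$ lie in $\Aut^\Delta[[1|2]]$); you are if anything more careful about the gluing and about why $\cO_{C,0}=\cO_{C_0}(1)$. For (1)$\Rightarrow$(2) you take a genuinely different route: the paper does not invoke Theorem \ref{split} at this point, but instead passes to the associated $SUSY_4$-curve $\widetilde C$, uses the $S(2)$-structure to obtain the globally defined $0|2$ distribution $\widehat E=\langle\rho^1\partial_z+\partial_{\theta^1},\rho^2\partial_z+\partial_{\theta^2}\rangle$, sets $E=i^*(\Pi\widehat E)$ for $i:C_{\rd}\hookrightarrow\widetilde C$, and reads off $\det E\simeq\Omega_{C_0}$ from \eqref{determinant}. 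Your version — splitness over a purely even base from Theorem \ref{split}, $E$ extracted from the conormal sequence, and $\beta$ obtained directly from $\Delta$ via $j^*\Ber_{C}\simeq\Omega_{C_0/S}\otimes\det E^*$ — is arguably more direct and makes the role of the trivialized Berezinian cleaner, at the cost of leaning on Theorem \ref{split}, whose own proof runs through the same $\mathbb G_a$-cocycle analysis that underlies the paper's construction; note that both routes use the $S(2)$ hypothesis in an essential way (in the paper's version to globalize $\widehat E$, in yours to guarantee splitness, which fails for general $S(1|2)$-curves). Your explicit discussion of the equivalence relation between pairs $(E,\beta)$ goes beyond what the paper's proof records.
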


\begin{proof}
	The previous comment shows $(2)\to(1)$.
	
	To see $(1)\to (2)$ consider the $SUSY_4$-super curve $\widetilde{C}$ associated to $C$. Since $C$ is $S(2)$, we have the well defined $0|2$ bundle
	\begin{equation*}
	\widehat{E}=\langle\rho^1\partial_z+\partial_{1},\rho^2\partial_z +\partial_{2}\rangle
	\end{equation*}
	over $\widetilde{C}$. The pullback $E:=i^*(\Pi\widehat E)$ over the inclusion $i:C_{\text{rd}}\hookrightarrow \widetilde{C}$ is a rank $2$ bundle. We get from equation \eqref{determinant} that $\det E\simeq \Omega_{C_0}$, where $C_0=C_{\text{rd}}$. Finally, since $C$ is $S(2)$, then $\mathcal{O}_{C,0}=\mathcal{O}_{C_0}\oplus\det E$ and $\mathcal{O}_{C,1}=\Pi E$.
\end{proof}

In this case, we have that such moduli space is parametrized by the space of curves, $C_0$, and a rank 2 fibre bundles $E$ with a marked isomorphism $\det E\simeq\Omega_{C_0}$.

\begin{observation}
	To calculate the dimension of the moduli space over a point $[(C,\Delta)]$ consider a deformation given by the class $\{X_{ij}\}\in H^1(C,\cT_{C})$ with respect to a covering $\{U_i\}_i$. If we want that the perturbation keep in the space of curves with a trivial Berezinian then we need that the class of $\{\sdiv_{\Delta}(X_{ij}) \}\in H^1(C,\mathcal O_{C})$ to be zero. So, the space of variations of curves with a trivial Berezinian is given by the kernel of the homomorphism of group
		\begin{equation*}
			\begin{split}
				H^1( C,\cT_{ C})\stackrel{\phi}{\to} &\ H^1( C,\mathcal O_{ C})\\
				\{X_{ij}\} \mapsto &\ \{\sdiv_{\Delta}(X_{ij}) \}
			\end{split}	
		\end{equation*}
	Similarly, if we want deformations of $S(2)$-super curves also we need that such perturbation is in the kernel of
		\begin{equation*}
			\begin{split}
				\ker(\phi)\stackrel{\psi}{\to} &\ H^1\left(C,\Aut^\Delta[[1|2]]\backslash\Aut^\delta_C\right)\\
				A=\{X_{ij}\}  \mapsto &\ \Gamma_A,
			\end{split}	
		\end{equation*}
	recall that $\Gamma_A=\{\exp(X_{ij})\}_{ij}$.
	
	Finally, the dimension of the moduli space of $S(2)$-super curves over $[\mathcal C]$ is given by $\ker(\psi)$ and $H^0(\mathcal C,\text{Ber}_{\mathcal C})$.
\end{observation}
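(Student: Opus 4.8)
The plan is to compute the Zariski tangent space to $\cM_{S(2)}$ at a point $[(C,\Delta)]$ by the standard \v{C}ech/Kodaira--Spencer description of deformations, imposing successively the trivial-Berezinian constraint and the $S(2)$-constraint, and then adding the freedom in the choice of section $\Delta$. First I would recall that a first-order deformation of the bare super curve $C$ is encoded, relative to a cover $\{U_i\}$ adapted to $\Delta$, by perturbing the transition functions $\Phi_{ij}\rightsquigarrow \Phi_{ij}\circ\exp(X_{ij})$; the $1$-cocycle condition forces $\{X_{ij}\}$ to represent a class in $H^1(C,\cT_C)$, with coboundaries corresponding to infinitesimal changes of the local coordinates. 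This is the unconstrained deformation space.

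Next I would impose the $S(1|2)$ condition. Since each $\Phi_{ij}\in\Aut^\delta[[1|2]]$, the deformed transition functions again carry a compatible trivialization of $\Ber_C$ precisely when, up to a coordinate change, they remain in $\Aut^\delta$. Using the identity $\Ber(\exp(X_{ij}))=\exp(\sdiv_\Delta(X_{ij}))$ established before Proposition~\ref{descomposicion}, this happens exactly when the collection $\{\sdiv_\Delta(X_{ij})\}$ is a \v{C}ech coboundary, i.e.\ when $\phi(\{X_{ij}\})=0$. Conceptually this is the long exact sequence attached to $0\to S(1|2)\to\cT_C\xrightarrow{\sdiv_\Delta}\cO_C\to 0$, in which $\phi$ is the induced map $H^1(C,\cT_C)\to H^1(C,\cO_C)$ and $\ker(\phi)$ is the image of the genuine $S(1|2)$-deformation classes. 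Here one must keep in mind that $\sdiv_\Delta$ is a first-order operator, not $\cO_C$-linear, so $\phi$ is only $\pi^*\cO_S$-linear and these are cohomologies of sheaves of $\pi^*\cO_S$-modules.

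Then I would impose the $S(2)$ condition through Proposition~\ref{prop:trivialga}: an $S(1|2)$-curve is $S(2)$ exactly when the $\mathbb{G}_a$-bundle $K=\Aut^\Delta\backslash\Aut^\delta_C$ is trivial. For a deformation already lying in $\ker(\phi)$, the first-order obstruction to this triviality is read off from $\Gamma_A=\{\exp(X_{ij})\}$ under the identification \eqref{isomorfismo} of $\Aut^\delta/\Aut^\Delta$ with $\mathbb{G}_a$ and of the associated bundle with $K\simeq S(1|2)/S(2)$; upon linearizing, $\psi$ is the map on $H^1$ induced by the projection $S(1|2)\to K$ in the sequence $0\to S(2)\to S(1|2)\to K\to 0$, so that the $S(2)$-deformations of the curve are exactly $\ker(\psi)$. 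Finally, holding the $S(2)$-curve structure fixed, the remaining freedom is the choice of nonvanishing global section $\Delta$; since nonvanishing is an open condition, its tangent space is all of $H^0(C,\Ber_C)$. Assembling the two contributions yields the tangent space built from $\ker(\psi)$ (curve directions) and $H^0(C,\Ber_C)$ (section directions), as claimed.

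The hard part will be justifying that $\psi$ is well defined on $\ker(\phi)$ and that its kernel genuinely captures the $S(2)$ condition: one must check that the \emph{nonlinear} triviality of a $\mathbb{G}_a$-torsor linearizes precisely to the class $\Gamma_A$, and reconcile the differing module structures, since $\sdiv_\Delta$ is not $\cO_C$-linear and $K$ and the relevant cohomology groups are $\pi^*\cO_S$-modules rather than $\cO_C$-modules. One should also settle whether the final assembly is a direct sum or a nontrivial extension, and quotient by the infinitesimal automorphisms (the $H^0(C,\cT_C)$-directions) to pass from first-order deformations to the honest tangent space of the (orbifold) moduli.
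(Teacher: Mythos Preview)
Your proposal is correct and matches the paper's approach: the paper presents this as an observation without a separate proof, and your argument supplies exactly the standard Kodaira--Spencer/\v{C}ech justification it implicitly relies on, using the same ingredients (the identity $\Ber(\exp X)=\exp(\sdiv_\Delta X)$ for $\phi$, Proposition~\ref{prop:trivialga} and the identification \eqref{isomorfismo} for $\psi$, and the freedom in $\Delta$ for the $H^0(C,\Ber_C)$ term). Your caveats in the final paragraph about $\cO_C$- versus $\pi^*\cO_S$-linearity and about quotienting by $H^0(C,\cT_C)$ are well taken and go somewhat beyond what the paper makes explicit.
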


Given a genus $g$, and a degree $d=g-1$. For the moduli space of genus $g$ curves with a rank 2 vector bundle with a fixed degree $d$, denoted by $\cM_{g,2,d}$ we consider the morphism to the space of curves with a line bundle with a fixed degree $2d$, $P_{g,2d}$, given by:
	\begin{equation*}
	\begin{split}
		\cM_{g,2,d}\to&P_{g,2d}\\
		[E\to C]\mapsto&[\det E\to C].
	\end{split}
	\end{equation*}
We obtain that the preimage of $\{[\Omega_C\to C]\}$, denoted by $M_{g,2,\Omega}$, is the space of curves with genus $g$ and a rank 2 bundle $E$ together with an isomorphism $\det E\to \Omega_C$. Recall that the smooth part is given by the stable bundles.

The universal curve is given by the projection $X(E)\mapsto [E\to X]$.

\begin{observation}
	The involution described in Section \ref{sec:duality} is exactly the same as the one mentioned in Observation \ref{dualidad}.
\end{observation}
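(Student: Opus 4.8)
The plan is to compare the two constructions directly on the associated $SUSY_4$-super curve $\widetilde C$, which underlies both of them. Recall that $\mu$ acts on an $S(2)$-super curve $C$ by forming the $\Aut^\omega[[1|4]]$-bundle $\Aut^\omega[[1|4]]\times_{\Aut^\delta[[1|2]]}\Aut^\delta_C$, i.e. the coordinate bundle of $\widetilde C$, and twisting it by the fixed involution $\alpha$; so if $\{\phi_i\}$ is an atlas of $C$ with transitions $\phi_{ij}\in\Aut^\Delta[[1|2]]$, then $\widehat C=\mu(C)$ is glued from the charts $\{(\alpha,\phi_i)\}$ with transitions $\mu(\phi_{ij})$, which again lie in $\Aut^\Delta[[1|2]]$ by the commutative diagram \eqref{nomaestro}. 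On the other hand, the dual curve of Observation \ref{dualidad} is obtained from $\widetilde C$ by replacing its tautological $0|2$-distribution $\widetilde E_+=\langle\partial_{\rho^1},\partial_{\rho^2}\rangle$, whose leaf space is $C$, by the complementary distribution $\widetilde E_-=\langle D^1_-,D^2_-\rangle$ with $D^j_-=\rho^j\partial_z+\partial_{\theta^j}$; this is the $SUSY_4$ analogue of Manin's passage $\widehat z=z-\theta\rho$, $\widehat\rho=\rho$ in the $SUSY_2$ case. Thus it suffices to show that twisting by $\alpha$ is precisely the operation of passing from $\widetilde E_+$ to $\widetilde E_-$.

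This is a short local computation. In a chart $(z\,|\,\theta^1,\theta^2,\rho^1,\rho^2)$ on $\widetilde C$ put $(\widehat z\,|\,\widehat\theta^1,\widehat\theta^2,\widehat\rho^1,\widehat\rho^2):=\alpha(z\,|\,\theta^1,\theta^2,\rho^1,\rho^2)=(z-\theta^1\rho^1-\theta^2\rho^2,\rho^1,\rho^2,\theta^1,\theta^2)$, so that inverting gives $z=\widehat z+\widehat\rho^1\widehat\theta^1+\widehat\rho^2\widehat\theta^2$, $\theta^i=\widehat\rho^i$, $\rho^i=\widehat\theta^i$. Differentiating one finds
\[ \partial_{\widehat\rho^i}=\rho^i\partial_z+\partial_{\theta^i}=D^i_-,\qquad \widehat\rho^i\,\partial_{\widehat z}+\partial_{\widehat\theta^i}=\partial_{\rho^i},\qquad i=1,2, \]
so $\alpha$ interchanges $\widetilde E_+$ and $\widetilde E_-$ (and $\alpha\in\Aut^\omega[[1|4]]$, as recorded at the start of Section \ref{sec:duality}). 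Hence the atlas $\{(\alpha,\phi_i)\}$ of the twisted bundle is adapted to the complementary distribution $\widetilde E_-$ instead of to $\widetilde E_+$, and its transition functions, computed from $\phi_{ij}$ through equation \eqref{coordenadas susy 4}, are exactly the gluing data of the $1|2$-super curve $\widetilde C/\widetilde E_-$, which is by definition the dual curve $\widehat C$ of Observation \ref{dualidad}. Therefore $\mu(C)=\widehat C$.

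Finally I would match this with the two explicit descriptions given in Observation \ref{dualidad}. If $C$ is split with reduced curve $C_0$ and rank-$2$ bundle $E$, then $\widetilde C$ is split over $C_0$ with bundle $W=E\oplus(E^*\otimes\Omega_{C_0})$, the distributions $\widetilde E_+$ and $\widetilde E_-$ being the two summands; quotienting by $\widetilde E_-$ produces the split curve over $C_0$ associated to $E^*\otimes\Omega_{C_0}$, which is exactly the dual curve named in the last line of Observation \ref{dualidad} and in the Example following Theorem \ref{teorema de automorfismo}. In general one instead checks that the interchange $\widetilde E_+\leftrightarrow\widetilde E_-$ swaps the two presentations of $\widetilde C$ as an extension $0\to\cO_C\to\cA_{\widetilde C}\to\Ber_C\otimes\Omega_C\to 0$, so that $\mu(C)$ and $\widehat C$ carry the same defining data. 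I expect the main obstacle to be exactly this non-split cocycle bookkeeping: one must track, through \eqref{coordenadas susy 4}, that conjugation by $\alpha$ carries the cocycle $\{j(\phi_{ij})\}$, which fixes $\widetilde E_+$ because $\phi_{ij}\in\Aut^\Delta[[1|2]]$, to a cocycle fixing $\widetilde E_-$ compatibly with the gluing of $\cA_{\widetilde C}$. This is a direct if somewhat lengthy verification, with no conceptual difficulty once the local identities above are established.
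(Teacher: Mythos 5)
Your argument is correct, and it supplies the verification that the paper leaves entirely implicit (the Observation is stated without proof). The key local computation — that $\alpha$ interchanges the distribution $\langle\partial_{\rho^1},\partial_{\rho^2}\rangle$, whose leaf space is $C$, with $\langle D^1_-,D^2_-\rangle$, whose leaf space is the dual curve of Observation \ref{dualidad} (the $E\leftrightarrow E^*\otimes\Omega_{C_0}$ swap in the split case) — is exactly the content of the statement and is the reasoning the paper intends.
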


\subsection{The odd part}

Let $\pi:C\to S$ be a family of $S(2)$super curves with a fixed genus $g$, for a split supermanifold $S$, with reduced space $S_\rd$ and $\cO_{S_\rd}$-free module $W$. Considering the closed point $s_0\in S$ and the fiber $C_{s_0}\to\{s_0\}$, there exists a covering by affine open sets $\{U_i\}$ of $C_{s_0}$ such that the family $C\to S$ restricts to $U_i\times S\to S$. This observation follows from \cite[Theorem~1.2.4]{sernesi2007deformations}. Then the global family is determined by cocycle given by the change of coordinates in $\Phi_{ij}\in \Aut((U_i\cap U_j)\times S)$.

Considering the reduced family $C_0\to S_\rd$ given by \eqref{familia reducida}, fixing the reduced family, then the reduced part of the change of coordinates $\Phi_{ij}$ is fixed, denote it by $\phi_{ij}\in\Aut((U_i\cap U_j)\times S)_\rd$. Then, the $\phi_{ij}^{-1}\Phi_{ij}$ is an automorphism being the identity over the reduced space, then this morphism is the exponential of a nilpotent vector field relative to $S$. To get a family of $S(2)$ curves we need the distribution:
	\begin{equation*}
		\mathcal D:=S(2)\cap \cT_{C/S}
	\end{equation*}
where $S(2)$ was described in \eqref{fibrado S(2)}, and the sheaf $\mathcal N$ of nilpotent elements of $\Lambda^\bullet\pi^* W$. Then, $\phi_{ij}^{-1}\Phi_{ij}=\exp(X_{ij})$ for an even vector field $X_{ij}\in \mathcal N\otimes\mathcal D$. The group bundle 
	\begin{equation}\label{group bundle}
		G=\exp((\mathcal N\otimes\mathcal D)_0),
	\end{equation}
and the deformations are parametrized by $H^1(C_0,G)$. 

Finally, we get

\begin{proposition}\label{odd part}
	Let $S$ be a split super scheme and a family $C\to S$ of $S(2)$-super curves, then this family correspond to a extension of $C_0\to S_\rd$ and a class in $H^1(C_0,G)$, for $G$ in \eqref{group bundle}.
\end{proposition}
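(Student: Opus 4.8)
The plan is to verify that the local data $\{\exp(X_{ij})\}$ constructed in the paragraph preceding the statement actually satisfies a cocycle condition valued in the sheaf of groups $G$, that changing the local trivializations alters this cocycle by a coboundary, and conversely that any class in $H^1(C_0,G)$ produces a family of $S(2)$-super curves extending $C_0\to S_{\rd}$; this is exactly the standard dictionary between locally trivial families and nonabelian \v{C}ech cohomology, so most of the work is bookkeeping.

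First I would fix the covering $\{U_i\}$ of $C_{s_0}$ over which the family $C\to S$ becomes a product $U_i\times S\to S$ (as already justified via \cite[Theorem~1.2.4]{sernesi2007deformations}), and record that the transition maps $\Phi_{ij}$ are automorphisms of $(U_i\cap U_j)\times S$ preserving the $S(2)$-structure; that they satisfy $\Phi_{ij}\Phi_{jk}=\Phi_{ik}$ on triple overlaps is just the compatibility of gluing data. Fixing the reduction $C_0\to S_{\rd}$ pins down the reduced transition maps $\phi_{ij}$, which themselves form a cocycle, so $X_{ij}$ defined by $\phi_{ij}^{-1}\Phi_{ij}=\exp(X_{ij})$ is a vector field relative to $S$ vanishing on the reduction, hence with coefficients in the nilpotent ideal sheaf $\mathcal N$; preserving the $S(2)$-structure forces $X_{ij}$ into the distribution $\mathcal D=S(2)\cap\cT_{C/S}$, so $\exp(X_{ij})$ is a section of $G=\exp((\mathcal N\otimes\mathcal D)_0)$. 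The cocycle identity for $\{\exp(X_{ij})\}$ in $G$ follows by transporting the identity $\Phi_{ij}\Phi_{jk}=\Phi_{ik}$ through the (fixed) reduced cocycle $\{\phi_{ij}\}$: writing $\Phi_{ij}=\phi_{ij}\exp(X_{ij})$ and using that conjugation by $\phi_{jk}$ maps $G$ to itself over the appropriate overlap, one obtains $\exp(X_{ij})^{\phi_{jk}}\exp(X_{jk})=\exp(X_{ik})$, which is precisely the $G$-cocycle relation for a sheaf of groups twisted by the reduced transition data. Replacing each $\Phi_i$ (local trivialization) by $\psi_i\Phi_i$ with $\psi_i$ an $S(2)$-automorphism of $U_i\times S$ restricting to the identity on the reduction multiplies the cocycle by the corresponding coboundary, and two extensions giving cohomologous cocycles are isomorphic as families of $S(2)$-super curves; this gives the well-defined map from families to $H^1(C_0,G)$.

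For the converse direction I would start from a class in $H^1(C_0,G)$, represent it by a cocycle $\{g_{ij}=\exp(X_{ij})\}$, set $\Phi_{ij}:=\phi_{ij}g_{ij}$, check using the $G$-cocycle relation that $\{\Phi_{ij}\}$ satisfies $\Phi_{ij}\Phi_{jk}=\Phi_{ik}$, and glue the trivial pieces $U_i\times S$ along these transition maps to produce $C\to S$; because each $\Phi_{ij}$ preserves the local $S(2)$-structure (it is the composite of the reduced $S(2)$-transition map with an element of $\exp(\mathcal D\otimes\mathcal N)$, both of which respect the Berezinian section and land in $\Aut^\Delta$ after exponentiation, cf. Observation \ref{generators} and Proposition \ref{fibrado trivial es S2}), the glued family is a family of $S(2)$-super curves, and by construction its reduction is $C_0\to S_{\rd}$. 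Finally I would note that the two constructions are mutually inverse up to the equivalences already built in, yielding the stated bijection.

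The main obstacle I expect is the interaction between the nonabelian nature of $G$ and the twisting by the fixed reduced cocycle $\{\phi_{ij}\}$: one must be careful that $G$ is really a sheaf of groups on $C_0$ (the group operation being the Baker--Campbell--Hausdorff product on $(\mathcal N\otimes\mathcal D)_0$, which terminates because $\mathcal N$ is nilpotent), and that conjugation by $\phi_{ij}$ genuinely preserves $\mathcal D$ — this uses that $S(2)$ is preserved under $S(2)$-changes of coordinates, i.e.\ that $\mathcal D$ is a well-defined subsheaf functorial under the gluing automorphisms. Once that functoriality is in place the cohomological classification is formal; the subtlety is purely in setting up $G$ and its action correctly so that $H^1(C_0,G)$ is the right object, and in checking that the splitting hypothesis on $S$ is exactly what guarantees $\mathcal N$ (hence $G$) is globally defined on $C_0$ rather than only locally.
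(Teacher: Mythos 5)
Your proposal follows essentially the same route as the paper: trivialize the family over a covering $\{U_i\}$ via Sernesi, factor the transition maps as $\Phi_{ij}=\phi_{ij}\exp(X_{ij})$ with $X_{ij}\in(\mathcal N\otimes\mathcal D)_0$, and read off a class in $H^1(C_0,G)$. In fact you supply more detail than the paper does (the twisted cocycle identity, the coboundary under change of trivialization, and the converse gluing construction), all of which is consistent with and implicit in the paper's argument.
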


\subsection{Inner Automorphism}

Now, we will check which automorphisms preserves the family of curves. We are looking for maps $\psi:S\to S$ such that the families of $S(2)$ super curves $C\to S$, $\psi^*C\to S$ are equal, in this case we are going to say that $\psi:S\to S$ preserve the family. First consider the following lemma:

\begin{lemma}
	Let $\pi:C\to S$ be a family of curves, suppose that $\psi:S\to S$ preserve the family and $\psi_\rd:S_\rd\to S_\rd$ is the identity, then $\psi$ is the identity.
\end{lemma}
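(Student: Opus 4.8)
The statement to prove is: if $\pi : C \to S$ is a family of curves, $\psi : S \to S$ preserves the family, and $\psi_\rd : S_\rd \to S_\rd$ is the identity, then $\psi$ is the identity. The plan is to reduce everything to the local/affine picture provided by the earlier discussion, where the family $C \to S$ is obtained by gluing pieces $\pi^{-1}(U_i) \cong U_i \times S \to S$, and to exploit that $\psi$ being the identity on $S_\rd$ forces $\psi$ to be of the form $\mathrm{id} + (\text{nilpotent part})$, i.e. $\psi = \exp(v)$ for a nilpotent vector field $v$ on $S$, valued in the odd (nilpotent) part of $\cO_S$. The ``family-preserving'' condition then becomes a constraint that kills $v$.

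First I would set up notation: since $S$ is (locally) split with reduction $S_\rd$ and the nilpotent ideal $\cN \subset \cO_S$, the automorphism $\psi$ with $\psi_\rd = \mathrm{id}$ corresponds to an $\cO_{S_\rd}$-derivation of $\cO_S$ landing in $\cN$; equivalently $\psi = \exp(v)$ for a nilpotent even vector field $v$ on $S$ vertical over the point. The hypothesis that $\psi$ preserves the family means there is an isomorphism $\Psi : C \xrightarrow{\sim} \psi^* C$ of families over $S$; pulled back to the trivializing cover, this gives, on each $U_i \times S$, an automorphism $\Psi_i$ compatible with the gluing cocycles $\Phi_{ij}$, so that $\Psi_i \circ \Phi_{ij} = \psi^*(\Phi_{ij}) \circ \Psi_j$. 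Because $\psi_\rd = \mathrm{id}$, each $\Psi_i$ reduces to the identity on $(U_i)_\rd \times S_\rd$, hence is itself an exponential $\exp(w_i)$ of a nilpotent relative vector field.

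Next I would differentiate the cocycle compatibility to first order in $\cN$: writing $\psi^*(\Phi_{ij})$ as $\Phi_{ij}$ composed with the flow along the pushforward of $v$, the relation $\Psi_i \circ \Phi_{ij} = \psi^*(\Phi_{ij})\circ \Psi_j$ says exactly that the collection $\{w_i\}$ is a Čech $0$-cochain whose coboundary equals the cocycle coming from $v$. The key point is that the deformation $\psi^* C$ of $C$ induced by $v \in H^0(S, \cT_S \otimes \cN)$ — really the pullback of the classifying map $S \to A$ precomposed with the flow of $v$ — is the \emph{trivial} deformation precisely when $v$ acts trivially on the classifying data. Since by hypothesis $\psi^* C = C$ as families (not merely abstractly isomorphic: $\psi$ preserves the family in the strong sense defined just above), and since $S_\rd \to A$ is fixed, the induced infinitesimal action of $v$ on the moduli/classifying data must vanish. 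I would then argue that the classifying map $S \to A$ (for the even part) together with the gluing data determining the odd part (the class in $H^1(C_0, G)$ of Proposition \ref{odd part}) together form a faithful set of invariants, so $v = 0$, whence $\psi = \exp(v) = \mathrm{id}$.

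The main obstacle I anticipate is making precise the claim that ``$\psi$ preserves the family and is the identity on $S_\rd$'' forces the infinitesimal generator $v$ to act trivially — one must be careful that ``preserves the family'' is used here in the strict sense (equality of the two families $C \to S$ and $\psi^* C \to S$, as in the sentence preceding the lemma), not up to isomorphism, since otherwise the statement could fail when $C$ has nontrivial automorphisms. Granting the strict reading, the essential content is that an $S$-automorphism of $\cO_S$ trivial modulo $\cN$ and trivial on $S_\rd$, whose associated derivation annihilates the gluing cocycles of $C$, must annihilate $\cO_S$ entirely; this is where one uses that $C \to S$ is a genuine family (the trivializing cover exists by \cite[Theorem~1.2.4]{sernesi2007deformations}) and that the cocycles $\Phi_{ij}$ generate enough of $\cO_S$ over $\cO_{S_\rd}$ to detect any nonzero vertical derivation. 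I expect the write-up to be short: set up $\psi = \exp(v)$, restrict to the affine cover, observe the cocycle constraint forces $v$ to annihilate the structure sheaf, conclude.
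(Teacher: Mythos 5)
Your proposal is correct and, once the $\exp(v)$ bookkeeping is stripped away, follows essentially the same route as the paper: cover $S$ by split open pieces, invoke the classification of a family with fixed reduction $C_0\to S_\rd$ by its class in $H^1(C_0,G)$ (Proposition \ref{odd part}), and conclude that $\psi^*C=C$ forces $\psi=\mathrm{id}$. The caveat you flag --- that the classifying data must be faithful enough to detect a nonzero vertical derivation, which fails for instance for a constant family, so the lemma really needs the strict reading of ``preserves the family'' together with enough nondegeneracy of the gluing cocycles --- is a genuine one, but it is equally present and left unaddressed in the paper's own one-line proof, which passes from ``both classes coincide'' to ``$\psi$ is the identity'' without further argument.
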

\begin{proof}
	Since, we can cover $S$ by open split schemes $\{S_i\}_i$, then $\psi|_{C_i}:C_i\to S_i$, with $C_i=\pi^{-1}(S_i)$, is the identity over $S_{i,\rd}$. The family $C_i\to S_i$ is defined by $C_{i,0}\to S_{i,\rd}$ and a class $H^1(C_{i,0},G_i)$. Since $\psi$ define the same family, then both classes should coincide, then $\psi$ should be also the identity.
\end{proof}

From the previous lemma, it follows that any automorphism of the family $X\to S$ is given by an automorphism over the reduced space $S_\rd$.

Let $\pi:C\to S$ be a family of $S(2)$-super curves, $S$ is a scheme and $E$ be the rank 2 bundle defining $\pi$. Considering the reduction $C_\rd\to S$, we obtain a family of genus $g$ curves over $S$. From now on we consider only the case $g\geq 2$. There does not exists any automorphism different from the identity. Then, in order to study such automorphism we have to check what happens in the odd part generated by the bundle $E$, that is we have to study the automorphisms of the rank $2$ bundle $E\to C_\rd$, with the chosen isomorphism $\det E\to C_\rd$. Considering the stable bundles, we have $\Aut_{C_\rd}(E)=\pi^*\cO_S^*$, since we need that such automorphism preserve the isomorphism $\det E\to C_\rd$, this group reduces to $\pm1$. The corresponding automorphism given by $1$ is the identity, while the automorphism given by $-1$ is denoted by $\Psi$. Then we get the lemma:

\begin{lemma}
	Let $\pi:C\to S$ be a family of genus $g$ $S(2)$-super curves, $S$ is a scheme and $E$ be the rank 2 stable bundle defining $\pi$. Then any automorphism $\psi:S\to S$ preserving the family corresponds to a class $\sigma_\psi\in H^1(S,\mathbb Z/2\mathbb Z)$.
\end{lemma}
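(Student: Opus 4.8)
The plan is to run the automorphism computation through the split description of the family, continuing the analysis of the two preceding lemmas. By Theorem~\ref{split} and Proposition~\ref{even data}, the family $\pi\colon C\to S$ over the scheme $S$ is the same datum as its reduced family $C_\rd\to S$ of genus $g$ curves, the stable rank $2$ bundle $E\to C_\rd$, and the isomorphism $\beta\colon\det E\xrightarrow{\sim}\Omega_{C_\rd/S}$. An automorphism $\psi\colon S\to S$ that preserves the family also preserves the reduced family, hence lifts to an automorphism $\tilde\psi$ of $C_\rd$ over $\psi$; any two such lifts differ by a relative automorphism of $C_\rd\to S$ over $\mathrm{id}_S$, which for $g\ge2$ is the identity (as recorded above), so the lift $\tilde\psi$ is canonical. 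I use $\tilde\psi$ throughout to transport bundles on $C_\rd$.

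Next I would transport the determinant datum. The bundle $\tilde\psi^*E$ is again a stable rank $2$ bundle on $C_\rd$, and $\tilde\psi^*\beta$ composed with the canonical isomorphism $\tilde\psi^*\Omega_{C_\rd/S}\cong\Omega_{C_\rd/S}$ makes $(\tilde\psi^*E,\tilde\psi^*\beta)$ into a pair of the same type as $(E,\beta)$. By Proposition~\ref{even data}, that $\psi$ preserves the $S(2)$-family means precisely that, \emph{locally on $S$}, the pairs $(\tilde\psi^*E,\tilde\psi^*\beta)$ and $(E,\beta)$ are isomorphic. So I choose an open cover $\{S_i\}$ of $S$ together with isomorphisms $g_i\colon(\tilde\psi^*E,\tilde\psi^*\beta)|_{S_i}\xrightarrow{\sim}(E,\beta)|_{S_i}$.

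The heart of the argument is the identification of the sheaf of ambiguities of the $g_i$. Since $E$ is stable on every (connected) fibre of $C_\rd\to S$, one has $\Aut_{C_\rd}(E)=\pi^*\cO_S^*$; imposing compatibility with $\beta$ — a scalar automorphism $\lambda$ acts on $\det E$ by $\lambda^2$, so the requirement is $\lambda^2=1$ — cuts this down to the constant group scheme $\{\pm1\}\cong\mathbb Z/2\mathbb Z$ over $S$, generated by the automorphism $\Psi$. Hence $g_i\circ g_j^{-1}=:\sigma_{ij}\in H^0(S_i\cap S_j,\mathbb Z/2\mathbb Z)$, the family $\{\sigma_{ij}\}$ is a \v{C}ech $1$-cocycle, and replacing $g_i$ by $\eta_i g_i$ with $\eta_i\in H^0(S_i,\mathbb Z/2\mathbb Z)$ alters it by the coboundary $\{\eta_i-\eta_j\}$. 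This produces the desired well-defined class $\sigma_\psi\in H^1(S,\mathbb Z/2\mathbb Z)$ attached to $\psi$, and by construction $\sigma_\psi=0$ exactly when the $g_i$ glue, i.e.\ when $\psi^*C$ and $C$ are globally isomorphic as $S(2)$-super curves.

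I expect the main obstacle to be the rigidity input in the first paragraph — that for $g\ge2$ the reduced family $C_\rd\to S$ has no non-identity relative automorphism, which is what makes the lift $\tilde\psi$, and hence the pair $(\tilde\psi^*E,\tilde\psi^*\beta)$, canonically defined; granting this (it is asserted above), the rest is the standard \v{C}ech bookkeeping, the one essential point being that stability forces fibrewise endomorphisms of $E$ to be scalars, which is exactly why $E$ is assumed stable. A secondary subtlety is that ``preserves the family'' must be read in the local-on-$S$ sense for $\sigma_\psi$ to carry information; if only a global isomorphism is demanded, then $\sigma_\psi$ vanishes identically and the lemma records just the converse implication.
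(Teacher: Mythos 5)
Your proposal is correct and follows essentially the same route as the paper: rigidity of the reduced genus $g\ge 2$ family reduces everything to automorphisms of the pair $(E,\beta)$, stability gives $\Aut_{C_\rd}(E)=\pi^*\cO_S^*$, compatibility with $\beta$ cuts this to $\{\pm 1\}$, and the resulting local ambiguities assemble into a class in $H^1(S,\mathbb Z/2\mathbb Z)$. You merely make explicit the \v{C}ech cocycle bookkeeping and the local-on-$S$ reading of ``preserves the family'' that the paper leaves implicit.
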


Finally, we get

\begin{proposition}
	Let $\pi:C\to S$ be a family of genus $g$ $S(2)$-super curves. Then any automorphism $\psi:S\to S$ preserving the family corresponds to a class $\sigma_\psi\in H^1(S_\rd,\mathbb Z/2\mathbb Z)$.
\end{proposition}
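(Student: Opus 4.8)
The plan is to reduce everything to the reduced base $S_\rd$, where the preceding lemma already gives the answer, and then transport the conclusion back up using the rigidity lemma proved at the start of this subsection.

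First I would record that any automorphism $\psi\colon S\to S$ preserving the family is completely determined by its reduction $\psi_\rd\colon S_\rd\to S_\rd$. Indeed, if $\psi_1,\psi_2$ both preserve $C\to S$ and $\psi_{1,\rd}=\psi_{2,\rd}$, then $\psi_1\circ\psi_2^{-1}$ again preserves the family (a composite of isomorphisms $\psi_i^{*}C\xrightarrow{\sim}C$ gives one for the composite) and restricts to the identity on $S_\rd$, hence is the identity by the first lemma of this subsection; so $\psi_1=\psi_2$. Thus $\psi\mapsto\psi_\rd$ is injective on the group of automorphisms preserving $C\to S$.

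Next I would check that $\psi_\rd$ preserves the reduced family $C_0\to S_\rd$ of diagram \eqref{familia reducida}, i.e.\ the pullback of $C\to S$ along $\iota\colon S_\rd\hookrightarrow S$. Starting from an isomorphism $\psi^{*}C\xrightarrow{\sim}C$ over $S$ and pulling back along $\iota$, a short base-change bookkeeping --- using functoriality of the reduction, so that $\psi\circ\iota=\iota\circ\psi_\rd$, whence $(\psi^{*}C)\times_S S_\rd\simeq C\times_{S,\iota\circ\psi_\rd}S_\rd\simeq \psi_\rd^{*}C_0$ --- yields an isomorphism $\psi_\rd^{*}C_0\xrightarrow{\sim}C_0$ over $S_\rd$. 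Now $S_\rd$ is a purely even scheme, and (under the standing assumptions of the section, $g\ge 2$ and $E$ stable) the family $C_0\to S_\rd$ is split and, by Theorem \ref{split} and Proposition \ref{even data}, is exactly the one attached to a family of genus $g$ curves together with a stable rank $2$ bundle $E$ and an isomorphism $\det E\xrightarrow{\beta}\Omega_{C_0/S_\rd}$. Hence the previous lemma applies verbatim to $\psi_\rd$ and produces a class $\sigma_{\psi_\rd}\in H^{1}(S_\rd,\mathbb Z/2\mathbb Z)$; I would set $\sigma_\psi:=\sigma_{\psi_\rd}$.

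Finally I would observe that this is the asserted correspondence: $\sigma_\psi$ determines $\psi_\rd$ by the previous lemma, and $\psi_\rd$ determines $\psi$ by the rigidity step above, so $\psi\mapsto\sigma_\psi$ is injective and, over the scheme $S_\rd$, recovers the correspondence of the previous lemma. The only real subtleties I anticipate are the base-change bookkeeping in the middle step --- making sure the reduction of the $S(2)$-family over $S$ genuinely is the ``genus $g$ curve $+$ stable rank $2$ bundle $+$ determinant trivialization'' datum to which the previous lemma applies --- and, as throughout this section, the essential use of stability of $E$, which forces $\Aut_{C_0}(E)$ to be the scalars $\pi^{*}\mathcal O_{S_\rd}^{*}$, cut down to $\pm 1$ by the condition $\det E\simeq\Omega_{C_0/S_\rd}$; without stability the automorphism sheaf is larger and the clean $H^{1}(S_\rd,\mathbb Z/2\mathbb Z)$ answer would break down.
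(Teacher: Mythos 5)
Your proposal is correct and follows exactly the route the paper intends (the paper gives no written proof, just "Finally, we get", but the structure of the subsection — rigidity lemma plus the even-base lemma — is precisely the combination you carry out). Your explicit base-change step identifying $\psi_\rd^*C_0\simeq C_0$ and your remark that stability of $E$ is still an essential hypothesis (silently dropped from the proposition's statement in the paper) are both welcome clarifications rather than deviations.
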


\begin{theorem}
	The data $(M_{g,2,\Omega}, \{U_i\}_i,\delta_{ij})$, where $M_{g,2,\Omega}$ is the moduli space of genus $g$ curves $C$ joint with a rank 2 bundle $E\to C$ with a fixed isomorphism $\det E\simeq \Omega_C$, $U_i$ is a split super scheme such that $\cup U_{i,\rd}=M_{g,2,\Omega}$ and the associated bundle is given in Proposition \ref{odd part} considering the universal family $C_{U_{i,\rd}}\to U_{i,\rd}$, and $\delta_{ij}:U_i|_{U_{ij,\rd}}\to U_j|_{U_{ij,\rd}}$ isomorphism such that $\delta_{ij}\circ\delta_{jk}\circ\delta_{ki}\in\{\text{id},\Psi\}$; models the orbifold representing the functor of $S(2)$ super curves.
\end{theorem}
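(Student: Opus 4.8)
The plan is to assemble the three ingredients developed above --- the even part, the odd part, and the automorphism count --- into one orbifold presentation, following the recipe of \cite{lebrun1988} recalled at the start of this section: first represent the restriction of the functor to schemes, then the purely odd deformations over a split base, and finally record which automorphisms a family admits.

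First I would pin down the restriction of the moduli functor to schemes. By Proposition \ref{even data} this restriction is, over the stable locus, the functor sending a scheme $S$ to a family of genus $g$ curves $C_0\to S$ equipped with a rank $2$ bundle $E$ and an isomorphism $\det E\xrightarrow{\beta}\Omega_{C_0/S}$, modulo the equivalence of that proposition; so it is represented by $M_{g,2,\Omega}$. Its universal curve is produced by the construction preceding Proposition \ref{even data} applied to the universal pair $(E,\beta)$, and carries the universal $S(2)$-structure, given locally by a frame $\theta^1,\theta^2$ of $E$ with $\beta(\theta^1\otimes\theta^2)=dz$; by Lemma \ref{caracterizacion de reduccion} this $M_{g,2,\Omega}$ is forced to be the reduction of whatever super scheme (if any) represents the full functor.

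Next I would handle the odd directions. Given a split super scheme $S$ and a family $C\to S$ of genus $g$ $S(2)$-super curves, its reduction $C_0\to S_\rd$ is classified by a morphism $\phi_0\colon S_\rd\to M_{g,2,\Omega}$. I would cover $M_{g,2,\Omega}$ by affine opens $U_{i,\rd}$ over which the universal curve admits an affine trivialization as in \cite[Theorem~1.2.4]{sernesi2007deformations}, and let $U_i$ be the split super scheme supported on $U_{i,\rd}$ whose purely odd deformation datum is the $H^1$-class prescribed by Proposition \ref{odd part} for $C_{U_{i,\rd}}\to U_{i,\rd}$ and the group sheaf $G$ of \eqref{group bundle}. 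Over $\phi_0^{-1}(U_{i,\rd})$ the cocycle $\phi_{ij}^{-1}\Phi_{ij}=\exp(X_{ij})$ attached to $C\to S$ in the proof of Proposition \ref{odd part} is a class in $H^1(C_0,G)$, hence it factors the family uniquely through a morphism $S|_{\phi_0^{-1}(U_{i,\rd})}\to U_i$ lifting $\phi_0$, so $U_i$ represents the functor over the open $U_{i,\rd}$. A global family then produces comparison isomorphisms $\delta_{ij}\colon U_i|_{U_{ij,\rd}}\to U_j|_{U_{ij,\rd}}$, and since a general super scheme is locally split the same bookkeeping extends the conclusion to arbitrary bases.

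Finally I would invoke the automorphism count: the $\delta_{ij}$ are determined only up to automorphisms of the classified families, and the previous subsection shows that for $g\ge 2$ and stable $E$ the automorphism group of a family of $S(2)$-super curves is $\{\mathrm{id},\Psi\}$, where $\Psi$ is induced by $-1$ on $E$ and fixes $\beta$ since it acts trivially on $\det E$. Hence $\delta_{ij}\circ\delta_{jk}\circ\delta_{ki}\in\{\mathrm{id},\Psi\}$, which is precisely the datum of an orbifold with atlas $\coprod_i U_i$, transition arrows the $\delta_{ij}$, and generic isotropy $\mathbb Z/2\mathbb Z$; conversely descent along the $\delta_{ij}$ reconstructs a family from such a datum, and the two constructions are mutually inverse up to this $\mathbb Z/2\mathbb Z$ ambiguity. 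The step I expect to be the main obstacle is the local representability used above: checking that Proposition \ref{odd part} produces not merely a parametrizing set but a genuine super scheme $U_i$ carrying a universal odd deformation of the even universal curve, compatibly with the universal $S(2)$-structure of the second step. Granting that, the gluing and the $\mathbb Z/2\mathbb Z$-bookkeeping are formal.
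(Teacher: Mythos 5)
Your sketch is correct and follows essentially the same route as the paper, which gives no separate proof of this theorem but intends it as the assembly of Proposition \ref{even data} (even part represented by $M_{g,2,\Omega}$), Proposition \ref{odd part} (odd deformations over a split base classified by $H^1(C_0,G)$), and the automorphism lemmas reducing the isotropy to $\{\mathrm{id},\Psi\}$ for $g\ge 2$ and stable $E$. The representability issue you flag --- that Proposition \ref{odd part} must yield an actual split super scheme $U_i$ with a universal odd deformation, not just a parametrizing set --- is likewise left implicit in the paper, so your proposal matches its level of rigor as well as its structure.
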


\subsection{The even data for $S(1|2)$-super curves}

A general family of $S(1|2)$-super curve $C\to S$ over the even base $S$, with a nonvanishing section $\Delta\in H^0(C,\Ber_{C/S})$, defines a class 
\begin{equation}\label{igualdad}
\Gamma_C\in H^1(C,\pi^*\mathcal O_S),
\end{equation}
given by the bundle $K$ in \eqref{fibrado}. Suppose that this class is defined by a covering $\{U_i\}$ with local coordinates $\Phi_i$ compatible to $\Delta$ and $\Gamma_C=\{\lambda_{ij}\}$, then the change of coordinates is given by
\begin{equation*}
\begin{split}
z_j=&\ F_{ij}(z_i)+\lambda_{ij}\partial_{z_i}F_{ij}(z_i)\theta^1_i\theta^2_i\\
\theta^1_j=&\ \theta^1_ia_{11}(z_i)+\theta^2_ia_{12}(z_i)\\
\theta^2_j=&\ \theta^1_ia_{21}(z_i)+\theta^2_ia_{22}(z_i).
\end{split}
\end{equation*}
Here the covering $\{U_i\}$ and the change of coordinates $\{F_{ij}\}$ defines a curve $C_0$ and
\begin{equation*}
A_{ij}=\left[\begin{matrix}a_{11}(z_i)&a_{12}(z_i)\\
a_{21}(z_i)&a_{22}(z_i)\end{matrix}\right]
\end{equation*}
defines a rank 2 bundle $E$ over $C_0$ with $\det E\simeq\omega_{C_0}$.
We can define the new curve $C'=C_0(E)$ that is actually an $S(2)$-super curve.

The sheaf $\mathcal{O}_{C,0}$ defined over $C_0$ is a sheaf of algebras that is isomorphic to $C_0(1)$, the first neighbourhood of the diagonal on $C_0\times_SC_0$, if and only if $\{\lambda_{ij}\}\in H^1(C_0,\pi^*\mathcal{O}_S)$ vanishes. In general, each class $\Gamma\in H^1(C_0,\pi^*\mathcal{O}_S)$ defines a curve $C_{\Gamma}=(C_0,\mathcal{O}_{C,0})$ by the diagram from \eqref{Fig1}. First take $\mathcal L=L\otimes \mathcal O_{C_0}$ and $\mathcal A=\alpha(\mathcal O_{C_0})$, with these we construct the sheaf
\begin{equation*}
\begin{split}
\mathcal{O}_{C,0}:=\frac{T^{\bullet\geq1}\mathcal L}{\langle x\otimes y-y\otimes x,a\otimes a:x,y\in\mathcal L, a\in\mathcal A \rangle}.
\end{split}
\end{equation*}
Observe that this sheaf fits in the square-zero extension of algebras:
\begin{equation*}
\begin{split}
0\to\Omega_{ C_0}\to\mathcal{O}_{C,0}\to\mathcal{O}_{C_0}\to 0,
\end{split}
\end{equation*}
Since we consider terms of degree greater then zero, then $\mathcal{O}_{C,0}$ does not necessarily have the structure of an $O_{C_0}$-algebra.

Finally, we consider 
\begin{equation*}
\begin{split}
\mathcal{O}_{C,1}:=\Pi E,
\end{split}
\end{equation*}
with multiplication given by the isomorphism
\begin{equation*}
\begin{split}
\det E\to\Omega_{ C_0}. 
\end{split}
\end{equation*}
So, we get the $S(2)$-super curve $(C,\mathcal O_C)$. Finally:

\begin{proposition}\label{even data S(1|2)}
	For any family over a purely even base $\pi_0:C_0\to S$ of relative dimension $1|0$, the following data are equivalent:
	\begin{enumerate}
		\item An $S(1|2)$-family of curves $\pi:C\to S$ over $S$ with $C_{\text{rd}}=C_{0,\text{rd}}$ and $\mathcal{O}_{C,0}=\mathcal{O}_{C_\Gamma}$.
		\item A class $\Gamma\in H^1(C_0,\pi^*\mathcal{O}_S)$ and a rank $(2|0)$ bundle $E$ with an isomorphism $\det E\overset{\beta}{\to} \Omega_{C_0/S}$ up to equivalence: a triple $(E,\beta,\Gamma)$ is equivalent to $(E',\beta',\Gamma')$ if there exists an isomorphism $\alpha: E \rightarrow E'$, such that $\Gamma = \Gamma'$ and the diagram 
		\begin{equation*}
		\xymatrix{\det E\ar[r]\ar[rd]_{\beta} & \det E'\ar[d]^{\beta'}\\
			& \Omega_{C_0/S}}
		\end{equation*}
		commutes.
	\end{enumerate}
\end{proposition}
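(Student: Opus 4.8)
The plan is to mirror the proof of Proposition \ref{even data}, keeping track of the extra class $\Gamma$, which by Proposition \ref{prop:trivialga} is precisely the obstruction to the $S(1|2)$-curve being an $S(2)$-curve. So the reconstruction of an $S(1|2)$-curve from even data will be the reconstruction of the $S(2)$-case ``twisted'' by a $\mathbb G_a$-gluing of class $\Gamma$.

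For $(2)\Rightarrow(1)$ the construction was already described before the statement: given $(E,\beta,\Gamma)$ one forms the square-zero extension $0\to\Omega_{C_0/S}\to\mathcal{O}_{C,0}\to\mathcal{O}_{C_0}\to 0$ classified by $\Gamma$ through the diagram \eqref{Fig1} (equivalently the displayed quotient of $T^{\bullet\geq1}\mathcal L$), sets $\mathcal{O}_{C,1}:=\Pi E$, and defines $\mathcal{O}_{C,1}\otimes\mathcal{O}_{C,1}\to\mathcal{O}_{C,0}$ as $\det E\xrightarrow{\beta}\Omega_{C_0/S}\hookrightarrow\mathcal{O}_{C,0}$. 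What remains is to check $(C,\mathcal O_C)$ is genuinely an $S(1|2)$-curve: choosing a cover trivializing $E$ with a local frame $\theta^1_i,\theta^2_i$ and a coordinate $z_i$ with $\beta(\theta^1_i\otimes\theta^2_i)=dz_i$ produces the nonvanishing section $\Delta=[dz_i|d\theta^1_id\theta^2_i]$ of $\Ber_{C/S}$, and the change of coordinates has exactly the shape
\begin{equation*}
z_j=F_{ij}(z_i)+\lambda_{ij}\,\partial_{z_i}F_{ij}(z_i)\,\theta^1_i\theta^2_i,\qquad \theta^k_j=\theta^1_ia_{k1}(z_i)+\theta^2_ia_{k2}(z_i),
\end{equation*}
with $\{\lambda_{ij}\}$ a cocycle representing $\Gamma$ and $\det(a_{kl})=\partial_{z_i}F_{ij}$. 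Hence $\Delta$ is preserved and, by the computation in the Observation preceding Theorem \ref{split}, the cocycle lies in $\Aut^\delta[[1|2]]$, being $\widetilde\Phi_{ij}\circ\exp(\lambda_{ij}\theta^1_i\theta^2_i\partial_{z_i})$ with $\widetilde\Phi_{ij}\in\Aut^\Delta[[1|2]]$. Independence of the representative of $\Gamma$ and of the local trivializations is the usual \v{C}ech bookkeeping, carried out with the reparametrizations $z_i\mapsto z_i+\mu_i\theta^1_i\theta^2_i$.

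For $(1)\Rightarrow(2)$, let $(C,\Delta)\to S$ be an $S(1|2)$-family with $C_\rd=C_0$. Its nilpotent ideal $\mathcal J\subset\mathcal O_C$ gives the rank $0|2$ locally free sheaf $\mathcal F=\mathcal J/\mathcal J^2$ on $C_0$ (here $\mathcal F=\mathcal{O}_{C,1}$, since products of two odd sections are even, so $\mathcal J^2$ is the even nilpotent part), and we set $E:=\Pi\mathcal F$, of rank $2|0$. Restricting the local expression of $\Ber_{C/S}$ to $C_0$ gives, exactly as in the split example, a canonical isomorphism $j^*\Ber_{C/S}\simeq\Omega_{C_0/S}\otimes(\det E)^{-1}$, so the trivialization $\Delta$ yields $\beta:\det E\xrightarrow{\sim}\Omega_{C_0/S}$; and the $\mathbb G_a$-bundle $\Aut^\Delta[[1|2]]\backslash\Aut^\delta_C$ of \eqref{fibrado} defines $\Gamma_C\in H^1(C_0,\pi^*\mathcal O_S)$ as in \eqref{igualdad}. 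That $(E,\beta,\Gamma_C)$ recovers $C$ up to isomorphism follows from the classification of $1|2$-supercurves over a purely even base by the data $(C_0,\mathcal F,\{\omega_{ij}\})$ of \cite{Nojathesis,Donagi:2014hza}: for an $S(1|2)$-curve $\det\mathcal F=\Omega_{C_0/S}$, and by the Observation preceding Theorem \ref{split} the extension class $\{\omega_{ij}\}\in H^1(C_0,\cT_{C_0}\otimes\Omega_{C_0/S})=H^1(C_0,\mathcal O_{C_0})$ is the image of $\Gamma_C$ under $\pi^*\mathcal O_S\hookrightarrow\mathcal O_{C_0}$; hence $\mathcal{O}_{C,0}=\mathcal{O}_{C_\Gamma}$ in the notation of \eqref{Fig1} and $\mathcal{O}_{C,1}=\Pi E$ with product $\beta$, which is precisely the output of the construction in $(2)\Rightarrow(1)$. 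Finally, matching equivalences: an isomorphism $\Psi:(C,\Delta)\to(C',\Delta')$ over $S$ restricting to an isomorphism of reductions sends $\mathcal J$ to $\mathcal J'$, hence induces $\alpha:E\to E'$; since $\Psi^*\Delta'=\Delta$ the triangle relating $\beta,\beta'$ and $\det\alpha$ commutes, and since $\Psi$ intertwines $\Aut^\delta_C,\Aut^\delta_{C'}$ and their $\Aut^\Delta$-quotients, $\Gamma_C=\Gamma_{C'}$; conversely, given such $\alpha$ with $\Gamma=\Gamma'$, equality of classes gives $\mathcal{O}_{C,0}=\mathcal{O}_{C_\Gamma}=\mathcal{O}_{C',0}$ and $\Pi\alpha$ is compatible with the products, assembling into an isomorphism $\mathcal O_C\xrightarrow{\sim}\mathcal O_{C'}$ of sheaves of super algebras over $S$ preserving the Berezinians.

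The delicate point — the one I expect to cost the most work — is not any single step but the verification that $\Gamma_C$ together with $(E,\beta)$ is a \emph{complete} invariant. Unlike the $S(2)$ case (Theorem \ref{split}), $\mathcal{O}_{C,0}$ is a genuinely nonsplit square-zero extension of $\mathcal{O}_{C_0}$ by $\Omega_{C_0/S}$ and need not be an $\mathcal{O}_{C_0}$-algebra, so one must check carefully that this extension class, the class of $E$, and the multiplication $\mathcal{O}_{C,1}\otimes\mathcal{O}_{C,1}\to\Omega_{C_0/S}$ exhaust the structure constants of $\mathcal{O}_C$ and that no further gluing datum survives; this is where I would lean hardest on the diagram \eqref{Fig1} and on \cite{Noja:2018edj,Nojathesis}.
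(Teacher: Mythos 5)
Your proposal is correct and follows essentially the same route as the paper: the $(2)\Rightarrow(1)$ direction is the construction of $\mathcal{O}_{C,0}$ as the square-zero extension classified by $\Gamma$ via diagram \eqref{Fig1} together with $\mathcal{O}_{C,1}=\Pi E$ and the product $\beta$, and the $(1)\Rightarrow(2)$ direction extracts $\Gamma_C$ from the cocycle $\lambda_{ij}$ of \eqref{igualdad} and $(E,\beta)$ from $\mathcal J/\mathcal J^2$ and the trivialization of the Berezinian, exactly as in the paper's (much terser) argument. You additionally spell out the verification of equivalences and flag the completeness-of-invariants issue, both of which the paper delegates to the preceding Observation and to \cite{Nojathesis,Donagi:2014hza}.
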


\begin{proof}
	The previous comment shows $(2)\to(1)$.
	
	To see $(1)\to (2)$ we get the class $\Gamma\in  H^1(C_0,\pi^*\mathcal{O}_S)$ by taking $\lambda_{ij}=\gamma(\Phi_{ij})$ as in \eqref{igualdad}, and considering the $S(2)$-super curve $C'=C(E)$, from the comment, the associated $C'$ super curve verifies what we want.
\end{proof}

\subsection{The genus 1 curve}

For the special case of genus 1 curves, we have that the even part is given by an ordinary curve $E_0$ and an element of $H^1(E_0,SL(2,\mathbb C))$.

Is known that the space of elliptic curves is given by a quotient of the upper half space $\mathbb H$ by the group $SL(2,\mathbb Z)$, where any element $\tau\in\mathbb H$ defines a quotient of $\mathbb C$ by the action of the group $\mathbb Z_\tau=\{a+b\tau:a,b\in\mathbb Z\}$. Similarly, any element $(\tau,a)\in\mathbb H\times \mathfrak{sl}(2,\mathbb C)$, the quotient of $\mathbb C^{1|2}$ by the action of $A(z|\theta^1,\theta^2)=(z+1|\theta^1,\theta^2)$ and $B(z|\theta^1,\theta^2)=(z+\tau|(\theta^1,\theta^2)\exp(2\pi ia))$.
	
A family of even deformation is given by $\mathbb H\times \mathfrak{sl}(2,\mathbb C)$ considering the action of the following three groups: $SL(2,\mathbb Z)$, $\mathbb Z^2$, $SL(2,\mathbb C)$.
	\begin{enumerate}
		\item The group $SL(2,\mathbb Z)$: Consider the action 
			\begin{equation*}
				\left(\begin{matrix}a&b\\c&d\end{matrix}\right)\cdot (\tau,a) =\left(\frac{a\tau+b}{c\tau+d},\frac{a}{c\tau+d}\right)
			\end{equation*}
		and for the quotient, we have the isomorphism induced by
			\begin{equation*}
				\left(\begin{matrix}a&b\\c&d\end{matrix}\right)\cdot (z|\theta^1,\theta^2) =\left(\frac{z}{c\tau+d}\Bigg |(\theta^1,\theta^2)\exp\left(-2\pi iz\frac{ca}{c\tau+d}\right)\right)
			\end{equation*}
		and observe that such isomorphism preserves the Berezinian if and only if $c=0$ and $d=1$.
		\item The group $\mathbb Z^2$: Consider the action 
			\begin{equation*}
				(m,n)\cdot (\tau,a) =(\tau,a+m\tau+n)
			\end{equation*}
		and for the quotient, we have the isomorphism induced by
			\begin{equation*}
				(m,n)\cdot (z|\theta^1,\theta^2) =(z |(\theta^1,\theta^2)\exp\left(2m\pi iz\right))
			\end{equation*}
		and observe that such isomorphism preserves the Berezinian if and only if $m=0$.
		\item The group $SL(2,\mathbb C)$: Consider the action 
			\begin{equation*}
				C\cdot (\tau,a) =(\tau,CaC^{-1})
			\end{equation*}
		and for the quotient, we have the isomorphism induced by
			\begin{equation*}
				C\cdot (z|\theta^1,\theta^2)=(z|(\theta^1,\theta^2)C)
			\end{equation*}
		and observe that such isomorphism preserves the Berezinian.
	\end{enumerate}
Finally, if we consider the group $SL(2,\mathbb Z)\ltimes\mathbb Z^2\times SL(2,\mathbb C)$ with the product
	\begin{equation*}
		(A,\gamma,C)\cdot (A',\gamma',C')=(AA',\gamma'+\gamma A',CC')
	\end{equation*}
we get the fine moduli space $\mathcal{M}_0=\mathbb H\times \mathfrak{sl}(2,\mathbb C)\sslash SL(2,\mathbb Z)\ltimes\mathbb Z^2\times SL(2,\mathbb C)$ of the even families of $S(2)$-super curves.

\begin{observation}The moduli space of orientable $SUSY_2$-super curves is given by $(\tau,a)$, where $a\in \mathfrak{sl}(2,\mathbb C)$ is diagonal. In this case and coordinates given by $(z|\theta^1,\theta^2)$, the $SUSY$-structure is obtained by $\omega=dz+\theta^2d\theta^1$.
\end{observation}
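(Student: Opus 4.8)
The plan is to work directly with the explicit uniformization of a genus-$1$ $S(2)$-super curve as the quotient of $\mathbb{C}^{1|2}$ by the deck transformations $A(z|\theta^1,\theta^2)=(z+1|\theta^1,\theta^2)$ and $B(z|\theta^1,\theta^2)=(z+\tau|(\theta^1,\theta^2)\exp(2\pi i a))$, and to impose on this presentation the condition that the curve carry a $K(1|2)$-structure. By the discussion preceding the statement, orientability of a $SUSY_2$-super curve means exactly that the split contact form $\omega=dz+\theta^2 d\theta^1$ descends to the quotient up to multiplication by a function, i.e.\ that both generators of the deck group lie in $\Aut^\omega[[1|2]]$. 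So the whole question reduces to reading off when $A$ and $B$ preserve $\omega$ up to scale.

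First I would observe that $A$, being a pure translation in $z$ that fixes $\theta^1,\theta^2$, satisfies $A^*\omega=\omega$ trivially, so the only constraint comes from $B$. Writing $M=\exp(2\pi i a)=(m_{kl})\in SL(2,\mathbb{C})$ and $(\theta'^1,\theta'^2)=(\theta^1,\theta^2)M$, I would compute $B^*\omega=dz+\theta'^2\,d\theta'^1$ and demand $B^*\omega=f\,\omega$ for an even function $f$. Since $B$ only translates $z$, $f$ must be the constant $1$, and matching the four monomials $\theta^i d\theta^j$ in $\theta'^2 d\theta'^1=\theta^2 d\theta^1$ yields
\begin{equation*}
m_{11}m_{12}=0,\qquad m_{12}m_{21}=0,\qquad m_{11}m_{22}=1,\qquad m_{21}m_{22}=0.
\end{equation*}
The relation $m_{11}m_{22}=1$ forces $m_{11},m_{22}\neq 0$, whence $m_{12}=m_{21}=0$; that is, $M$ must be diagonal. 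Conversely, a diagonal $M$ with $\det M=1$ automatically satisfies these relations, so $B^*\omega=\omega$. Since $\exp$ is a local diffeomorphism near $0$ and the $SL(2,\mathbb{C})$-action in the moduli presentation conjugates $a$, a diagonal $M$ is equivalent to a diagonal $a\in\mathfrak{sl}(2,\mathbb{C})$, and the $SUSY$-structure is then visibly $\omega=dz+\theta^2 d\theta^1$ as claimed.

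To tie this to the intrinsic description I would check compatibility with the characterization of oriented curves by the splitting $E=\mathscr{L}\oplus\mathscr{L}^*\otimes\Omega_{C_0}$: on a genus-$1$ base $\Omega_{C_0}$ is trivial, the rank-two bundle $E$ is the flat bundle with single nontrivial holonomy $M$, and $E$ decomposes as a sum of two (necessarily dual) line bundles precisely when $M$ is diagonalizable, i.e.\ diagonal after the $SL(2,\mathbb{C})$-conjugation, matching the computation above. Finally, intersecting the diagonal locus $\{a\ \text{diagonal}\}\subset\mathbb{H}\times\mathfrak{sl}(2,\mathbb{C})$ with the symmetry group $SL(2,\mathbb{Z})\ltimes\mathbb{Z}^2\times SL(2,\mathbb{C})$ from the description of $\mathcal{M}_0$, one retains only the subgroup preserving diagonality (the diagonal torus together with the Weyl involution swapping the two eigen-line-bundles), giving the stated parametrization by $(\tau,a)$ with $a$ diagonal.

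The main obstacle I expect is not the coefficient-matching, which is routine, but two bookkeeping points: first, justifying that preservation of $\omega$ up to a function by the two deck generators is genuinely equivalent to a global $K(1|2)$-structure on the quotient (rather than merely a local one), which rests on the fact established earlier that $K(1|2)$ changes of coordinates are exactly those preserving $\omega$ up to scale; and second, correctly accounting for the residual symmetry so that the diagonal normal form is unique up to the torus and Weyl action, in particular verifying that the non-diagonalizable (nonzero nilpotent) orbits of $a$, where $M$ is a nontrivial unipotent, are exactly the genus-$1$ $S(2)$-curves that fail to be orientable $SUSY_2$.
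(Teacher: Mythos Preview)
The paper states this observation without proof; it is a closing remark in the genus-$1$ section with no accompanying argument. So there is no ``paper's proof'' to compare against, and your proposal supplies a justification where the paper simply asserts.

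Your approach is correct and is the natural one. The coefficient-matching on $B^*\omega$ is right and forces $M=\exp(2\pi i a)$ to be diagonal; conversely a diagonal $M$ visibly makes $\omega=dz+\theta^2 d\theta^1$ descend. Your second paragraph, invoking the intrinsic characterization $E=\mathscr{L}\oplus\mathscr{L}^*\otimes\Omega_{C_0}$, is important and not merely a ``compatibility check'': it is what actually closes the converse direction. From the form computation alone you only get that \emph{this particular} $\omega$ descends iff $M$ is diagonal, whereas the statement concerns existence of \emph{some} orientable $SUSY_2$ structure. A priori a $SUSY_2$ structure on the quotient need not pull back to the standard contact form in the given uniformizing coordinates. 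The splitting criterion (on genus $1$, $\Omega_{C_0}$ trivial, so $E$ splits as dual line bundles iff its holonomy is diagonalizable, iff $a$ is conjugate to a diagonal matrix) is exactly what bridges that gap, and you are right to include it.

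One small sharpening: your sentence ``since $\exp$ is a local diffeomorphism near $0$'' is not the relevant fact. What you need is that $M$ diagonalizable implies $a$ diagonalizable (false for general Lie groups, but true here because a nontrivial Jordan block in $a\in\mathfrak{sl}(2,\mathbb{C})$ exponentiates to a non-diagonalizable unipotent). You use this implicitly in your final paragraph when you identify the nilpotent orbit as the non-orientable locus, so the logic is there; just replace the local-diffeomorphism remark with this observation.
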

	
	\nocite{*}
\bibliography{bibfile2}

\end{document}